\numberwithin{equation}{section}
\theoremstyle{plain}
\newtheorem{Thm}{Theorem}[section]
\newtheorem{Lem}[Thm]{Lemma}
\newtheorem{Prop}[Thm]{Proposition}
\newtheorem{Cor}[Thm]{Corollary}
\newtheorem{ConstrIntro}{Construction}
\newtheorem{NotIntro}[ConstrIntro]{Notation}
\newtheorem{ThmIntro}[ConstrIntro]{Theorem}
\theoremstyle{definition}
\newtheorem{Def}[Thm]{Definition}
\newtheorem{Rk}[Thm]{Remark}
\newtheorem{Ex}[Thm]{Example}
\newtheorem{Not_num}[Thm]{Notation}
\newtheorem{Obs}[Thm]{Observation}
\newtheorem{Constr}[Thm]{Construction}
\theoremstyle{remark}
\newcommand{\qo}{quasi-ordinary }
\newcommand{\wrt}{with respect to }
\newcommand{\gqz}{{\geq 0}}
\newcommand{\nupoly}[4]{\Delta^{#1} (\, #2; \, #3; \, #4 \,)}
\newcommand{\cnupoly}[3]{\Delta^{#1} (\, #2; \, #3\,)}
\newcommand{\0}{{\bf 0}}
\renewcommand{\a}{{\bf a}}
\renewcommand{\b}{{\bf b}}
\newcommand{\q}{{\bf q}}
\renewcommand{\u}{{\bf u}}
\newcommand{\U}{{\bf U}}
\newcommand{\x}{{\bf x}}
\newcommand{\X}{{\bf X}}
\newcommand{\y}{{\bf y}}
\newcommand{\z}{{\bf z}}
\newcommand{\M}{{\bf M}}
\newcommand{\N}{{\bf N}}
\newcommand{\PL}{{\bf P}}
\newcommand{\Alpha}{{\boldsymbol\alpha}} 
\newcommand{\Beta}{{\boldsymbol\beta}}
\newcommand{\field}{ K }
\newcommand{\coeffforf}{\rho}
\newcommand{\coeffforfmodified}{\mu}
\newcommand{\coeffforoverweight}{\rho}
\newcommand{\coeffforoverweightplus}{\mu}
\newcommand{\theinvariant}{\kappa}
\newcommand{\GOOD}{\infty}
\newcommand{\BAD}{-1}
\newcommand{\lastentry}{\xi}
\newcommand{\indexorder}{{poly}}
\newcommand{\weight}{ W}
\newcommand{\IA}{\mathbb{ A }}
\newcommand{\IC}{\mathbb{ C }}
\newcommand{\IQ}{\mathbb{ Q }}
\newcommand{\IR}{\mathbb{ R }}
\newcommand{\IZ}{\mathbb{ Z }}
\newcommand{\mfX}{{\mathfrak{X}}}
\begin{document}

\thispagestyle{empty}

\title
{
A polyhedral characterization of quasi-ordinary singularities
}

\author{Hussein Mourtada}
\thanks{The first author is  partially supported by the ANR-12-JS01-0002-01 SUSI}
\address{
Hussein Mourtada\\
Equipe G\'eom\'etrie et Dynamique \\
Institut Math\'ematique de Jussieu-Paris Rive Gauche\\
Universit\'e Paris 7 \\
B\^atiment Sophie Germain, case 7012\\
75205 Paris Cedex 13, France}
\email{hussein.mourtada@imj-prg.fr}

\author{Bernd Schober}
\thanks{The second author is supported by Research Fellowships of the Deutsche Forschungsgemeinschaft (SCHO 1595/1-1 and SCHO 1595/2-1).}
\address{Bernd Schober\\
Johannes Gutenberg-Universit\"at Mainz, Fachbereich 08, Staudingerweg 9, 55099 Mainz, Germany}
\curraddr{
	Institut f\"ur Algebraische Geometrie\\
	Leibniz Universit\"at Hannover\\
	Welfengarten 1\\
	30167 Hannover\\
	Germany}
\email{schober@math.uni-hannover.de}

\keywords{quasi-ordinary singularities, characteristic polyhedron, overweight deformations.}
\subjclass[2010]{14B05, 32S05, 13F25, 14E15}

\begin{abstract}
	Given an irreducible hypersurface singularity of dimension $d$ 
	(defined by a polynomial $f\in \field[[ \x ]][z]$) 
	and the projection to the affine space defined by $\field[[ \x ]]$, we construct
	an invariant which detects whether the singularity is quasi-ordinary with respect to the projection. 
	The construction uses a weighted version of Hironaka's characteristic polyhedron and
	successive embeddings of the singularity in affine spaces of higher dimensions.  
	When $ f $ is quasi-ordinary, our invariant determines the semigroup of the singularity and hence it encodes the embedded topology of the singularity $ \{ f = 0 \} $ in a neighbourhood of the origin when $ \field = \IC$
	and $ f $ is complex analytic; 
	moreover, we explain the relation between the construction and the approximate roots.
\end{abstract}

\maketitle

\section*{Introduction}
\label{Intro}

Let $\field$ be an algebraically closed field of characteristic $0$ and let us denote by $\field[[ \x ]]$ the power series ring $\field[[x_1,\ldots,x_d]],~d \in \IZ_+.$ 
The first objects considered 
in this paper are quasi-ordinary polynomials: 
a Weierstrass polynomial
$$ 
	f = f (\x,z) = z^n + f_1(\x) z^{n-1} + \ldots + f_{n-1}(\x) z + f_{n} (\x) \in \field [[\x]][z] 
$$
satisfying $f(\0, 0 )=0$ is said to be quasi-ordinary if its discriminant as a polynomial in $z$ is a monomial up to
multiplication by a unit in $\field[[ \x ]].$ 
A celebrated theorem by Abhyankar and Jung states that the roots of such a polynomial sit in $\field[[x_1^{\frac{1}{n}},\ldots,x_d^{\frac{1}{n}}]]$, see \cite{Jung} and \cite{Abhy_ram_algfct} (see also \cite{Lu}, \cite{PR} and see \cite{Cu2} for a generalization of this theorem).

From a different point of view, one can consider the formal quasi-ordinary germ $(V=\{f=0\},0);$
then the singularities of $(V,0)$ are intimately related to the roots of $f.$ 
When $d=1,$ i.e., when $V$ is a plane curve, then the Newton algorithm for determining the roots of $f$ gives also a resolution of singularities of $V,$ \cite{Cu}. 
When $d>1,$ this assertion makes sense thanks to the notion
of 
``characteristic exponents'' 
introduced
by Lipman \cite{LipmanThesis}. 
These invariants are extracted from the roots of $f,$ knowing the fact that the latter belong to  $\field[[x_1^{\frac{1}{n}},\ldots,x_d^{\frac{1}{n}}]],$
and it was proved by Gau that they determine the topological type of the singularity $(V,0)$ (when $ \field = \IC $
and $ f $ is complex analytic), see \cite{G}. 
Lipman also asked how one can construct an embedded resolution of singularities of 
$(V,0)\subset (\IA^d,0)$ from the characteristic exponents or equivalently from the roots of $f.$  
There exist many approaches to this question, e.g.~\cite{V}, \cite{GP1}, \cite{BMc}, \cite{CM}.  

This leads us to the other important objects for this paper, the invariants of resolutions of singularities. 
In particular, the following two approaches to prove the existence of an embedded resolution of singularities are crucial in our context:
The first approach is to construct an invariant which takes values in a ``well ordered set'' and to prove that there exists a finite sequence of blowing ups which makes this invariant strictly decrease.
Such an invariant should of course detect regularity, but should also not be too sophisticated in order to be able to follow its changes after blowing up.
The work of the second author suggests that this type of invariants is very much related to polyhedral invariants, namely to Hironaka's characteristic polyhedra \cite{H}, \cite{BerndBM}, \cite{CScompl}. 

The other approach is to resolve singularities by one toric morphism. 
This is not always possible if we do not change the ambient space. 
So the second approach is about finding an embedding in a higher dimensional affine space in such a way that one can resolve the singularities by one toric morphisms, \cite{T}, , \cite{GP1}, \cite{M1}, \cite{M2}, \cite{LMR}, \cite{Tevelev}.

In this paper, using a mixture of these two approaches, we build an invariant which detects whether $(V,0)$ is a quasi-ordinary singularity.
We first will introduce a weighted version of Hironaka's characteristic polyhedron and build our invariant from the weighted Hironaka polyhedra of successive embeddings of our singularity in  higher dimensional affine spaces. Let us give some details about this construction.

For $ c,d \in \IZ_+ ,$ let $\weight$ be a map $ \weight: \IZ^d_\gqz \to \IQ_\gqz^c$ which is the restriction of a linear map $\IQ^d \to \IQ^c.$
The data $ \weight $ is equivalent to the data of a $c \times d $ matrix that we also denote by $\weight.$
This map should be thought as a weight map on the monomials of $ \field [[ \x ]] $, where we assign to $ x_i $ the weight $ \weight ( e_i )\in\IQ_\gqz^c $ of the $ i $--th unit vector.
A special case is the identity $ \weight_0 : \IZ^d_\gqz \to \IQ^d_\gqz $ determined by the unit matrix $ Id_d.$  

Let $ f = \sum\limits_{\a, b} \coeffforf_{\a, b } \, \x^\a \, z^b \in \field [[\x]] [z] $ be a Weierstrass polynomial of degree $ n $. 
We associate with the projection given by the inclusion 
$\field[[ \x ]] \hookrightarrow \field [[\x ]][z] / \langle f \rangle,$ and with the weight $\weight,$ a polyhedron
$ \nupoly{\weight}{f}{\x}{z} $ which is defined to be
the smallest convex subset of $ \IR^c_\gqz $ containing all the points of the set
	$$
		\left\{ \,
			\frac{ \weight ( \a ) }{ n - b } + \IR^c_\gqz \; \mid\;  \coeffforf_{\a, b } \neq 0 \, \wedge \,  b < n \,
			\right\}\,.
	$$
 We minimize $ \nupoly{\weight}{f}{\x}{z} $ (for the inclusion) with respect to all changes of variables that respect the inclusion
 $\field[[ \x ]] \hookrightarrow \field [[\x ]][z] / \langle f \rangle$ 
(see section 2).

The polyhedron $ \nupoly{\weight}{f}{\x}{z} $ \wrt $ \weight$ is closely connected to Hironaka's characteristic polyhedron \cite{H}.
In fact, once it is minimal \wrt the choice of $ z $, it is the image of Hironaka's characteristic polyhedron under $ \weight $
{\label{CHANGE:01}considered as a map from $ \IR^d  $ to $ \IR^c $}
. 

With these polyhedra we define the notion of $ \nu $-\qo polynomials \wrt $ \weight $:
$ f $ is called a $ \nu $-\qo polynomial \wrt $ \weight $ 
{(and \wrt the above inclusion given by the $ ( \x) $)
if there exists some $ u_0 := z + h(\x) $ such that the polyhedron 
$ \nupoly{\weight}{f}{\x}{u_0} $ is either empty or has exactly one vertex $ v $ that cannot be eliminated
by a change of variable in $ u_0 $.} 
This generalizes Hironaka's notion of $ \nu $-\qo polynomials which are $ \nu $-\qo polynomials \wrt $ \weight_0.$

If $ f $ is $ \nu $-\qo \wrt $ W $ then the initial form at the unique vertex $ v $, i.e., the sum of those monomials determining
$ v $,
$$	
		F_{v, \weight }  = in_{v, \weight } (f) = U_0^{ n } - \sum_{\frac{\weight ( \a ) }{ n - b } =v} \coeffforf_{\a,b}
		 \X^{\a} U_0^b \,,
$$	
will be of particular interest for us.
If $ f $ is irreducible and $ \weight = \weight_0 $ then it is known that
$	
		F_{v, \weight_0 }  = ( U_0^{ m } - \coeffforf \X^{\a} )^{e} ,
$	
for certain $ m, e \in \IZ_+ $, $ m \cdot e = n $, $ \coeffforf \in \field^{\times} $ and $ \a \in \IZ^d_\gqz $ with $\frac{\a}{m} = v $ and $ \gcd(\a,m) = 1 $, see \cite{Evelia_Pedro_Compositio} resp.~\cite{ACLM_Factor}, or \cite{GR_Bernd} for a recent generalization to arbitrary fields.

\medskip

The construction of our invariant $ \theinvariant ( f; \x; z) $ goes as follows (for a detailed and precise version, see Construction \ref{The_Construction}).
If $f$ is not  $ \nu $-\qo then set  $\theinvariant ( f; \x; z) :=(\BAD).$ If $f$ is
$ \nu $-\qo then we set the first component of $\theinvariant ( f; \x; z)$ to be $v_1,$ which is the unique vertex  of {the minimal polyhedron $ \nupoly{\weight_0}{f}{\x}{u_0} $, $ u_0 = z + h_0(\x) $,} and let 
	$	
		F_{v_1, \weight_0 } = ( {U_0^{ n_1 }} - \coeffforf_1 \X^{\a_1} )^{e_1 } ,
	$	
	where $ \rho_1 \in \field^\times $, $ \frac{\a_1}{n_1} = v_1 $,
	and $ \gcd(\a_1,n_1) = 1 $.

	{\label{CHANGE:03}We extend $ \weight_0 $ to a linear map $ \weight_1 : \IZ^{ d+ 1}_\gqz \to \IQ_\gqz^d $ on $ \field [[ \x ]][u_0] $ by assigning
	$
		\weight_1 ( u_0) := v_1.
	$
	Let $  f^{(1)}  $ be the transform $ f $ which is obtained by exchanging $u_0^{ n_1 }$ in $f$ by $ z_1 + \coeffforf_1 \x^{\a_1}; $ 
	therefore we get that $ \ f^{(1)}  \in \field[[\x]][u_0]_{<n_1}[z_1], $ 
	i.e., $ u_0 $ appears to the power at most $ n_1 - 1 .$}
	Note that $  f^{(1)} $ is of degree $ e_1 = \frac{n}{n_1} < n $ in $ z_1 $.
	This strict inequality provides that our construction will be finite.
	
	Let
	\begin{equation}
	\label{eq:umini}
		u_1 := z_1 + h_1 ( \x, u_0 )	
	\end{equation}	
	be the change in $ z_1 $ such that the polyhedron
	$ \nupoly{\weight_1}{  f^{(1)}  }{\x, u_0}{u_1 } $
	becomes minimal (note that cleaning is this setting becomes slightly more subtle). 
	
	If $ f^{(1)}$ is not $ \nu $-\qo \wrt $ \weight_1 $ (for $ ( \x, u_0 ) $) then we set  
	$$
		\theinvariant ( f; \x ;z ) := ( \, v_1,\BAD).
	$$	
	If $ \nupoly{\weight_1}{ f^{(1)} } {\x, u_0}{u_1 } $ is empty then we put
	$$
		\theinvariant ( f; \x ;z ) := ( \, v_1,\GOOD).
	$$
	Otherwise, we denote by $ v_2 $ the unique vertex of $ \nupoly{\weight_1}{  f^{(1)}  }{\x, u_0}{u_1 } $ and we consider the  transform $f^{(2)}$ with respect to a change of variable suggested by a suitable initial form  $ F^{(1)}_{v_2, \weight_1 } $ of  $f^{(1)}$ at $ v_2.$ And we repeat the process till obtaining an empty polyhedron or a polyhedron with more than one vertex. After finitely many steps, say $ g \in \IZ_+ $, the construction ends and we define
	$$
		\theinvariant ( f; \x ; z ) := ( \, v_1, \, \ldots, \, v_g,\, \xi \,) \, ,
	$$ 
	where $ v_i $ is the vertex of the polyhedron in the $ i $--th step, $ \xi = \GOOD $ if the 
	polyhedron in the $ (g+1) $--th step is empty, and $ \xi = \BAD $ 
	else.	

The main result of this article is

\begin{ThmIntro}
\label{ThmIntro}
	Let $ f \in \field[[ \x ]][z ] $ be an irreducible $ \nu $-\qo polynomial of degree $ n $.
	Then $ f $ is a \qo polynomial if and only if the last entry of $ \theinvariant ( f; \x ;z ) $  is $ \GOOD $.
\end{ThmIntro}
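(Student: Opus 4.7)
My plan is to prove both implications by induction on the degree $n$ of $f$, exploiting the fact that each iteration of the construction produces a polynomial $\widetilde{f^{(1)}}$ of strictly smaller degree $e_1 = n/n_1 < n$ in the new variable $z_1$. The base case $n=1$ is immediate: $f$ is regular, its polyhedron is empty, $\theinvariant(f;\x;z) = (\GOOD)$, and $f$ is trivially \qo.

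For the forward direction, I assume $f$ is \qo \wrt $(\x)$. By Abhyankar--Jung, the roots of $f$ lie in $\field[[x_1^{1/n},\ldots,x_d^{1/n}]]$ and admit Puiseux expansions controlled by Lipman's characteristic exponents. The unique vertex $v(1)$ of the minimal polyhedron $\nupoly{\weight_0}{f}{\x}{u_0}$ equals $\a(1)/n_1$, the first characteristic exponent, and this is exactly what produces the factorization $F_{v(1),\weight_0} = (U_0^{n_1} - \coeffforf_1 \X^{\a(1)})^{e_1}$ recalled from \cite{Evelia_Pedro_Compositio, ACLM_Factor}. The embedding $z_1 = u_0^{n_1} - \coeffforf_1 \x^{\a(1)}$ isolates the first approximate root of $f$. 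The key step is to verify that $\widetilde{f^{(1)}}$, viewed as a Weierstrass polynomial of degree $e_1$ in $z_1$ over the image of $\field[[\x, u_0]]$ in $R_1$, is itself \qo \wrt the projection $(\x, u_0)$. This amounts to comparing the discriminants of $f$ and $\widetilde{f^{(1)}}$ and applying Abhyankar--Jung to the latter, so that the roots of $\widetilde{f^{(1)}}$ are again expressible as Puiseux series in suitable fractional powers. The inductive hypothesis then yields termination of the iterated construction with $\GOOD$, while conditions $(\star 0)$--$(\star 2)$ automatically hold because they translate into Lipman's normalization requirements on the successive characteristic exponents.

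For the converse, suppose $\theinvariant(f;\x;z) = (v(1),\ldots,v(g),\GOOD)$. The polyhedron being empty at step $g+1$ means that after the change in $z_g$ the iterated polynomial becomes $u_g^{e_g}$, and irreducibility of $f$ forces $e_g = 1$. Back-substituting through the chain of embeddings
\begin{equation*}
z_i = u_{i-1}^{n_i} - \coeffforf_i\, \x^{\a(i)}\, u_0^{b_0(i)} \cdots u_{i-2}^{b_{i-2}(i)}
\end{equation*}
together with the changes $u_i = z_i + h_i(\x, u_0,\ldots,u_{i-1})$, I solve recursively for $u_{g-1}, u_{g-2}, \ldots, u_0$ as Puiseux series, since the leading term of each equation is the monomial $\coeffforf_i \x^{\a(i)} u_0^{b_0(i)} \cdots u_{i-2}^{b_{i-2}(i)}$. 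Condition $(\star 1)$ provides that successive $v(i)$-weighted contributions strictly dominate the previous one in the componentwise order, while $(\star 2)$ ensures the lattice of exponents genuinely grows at each step, so the final Puiseux root of $f$ lies in $\field[[x_1^{1/n},\ldots,x_d^{1/n}]]$. Conjugate roots differ by roots of unity appearing in the $\coeffforf_i^{1/n_i}$, and their pairwise differences are units times the characteristic monomials, whence the discriminant of $f$ is a monomial times a unit and $f$ is \qo.

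The main obstacle I expect is the recursive bookkeeping of the changes $h_i$ and the weight maps $\weight_i$ across successive embeddings, and in particular verifying that minimality of $\nupoly{\weight_i}{f^{(i)}}{\x, u_0, \ldots, u_{i-1}}{u_i}$ is preserved (or re-established via Proposition \ref{Prop:ProductofBinomials}) at each step. Conditions $(\star 1)$ and $(\star 2)$ are the technical heart of the argument: establishing their equivalence to the standard combinatorial constraints on Lipman's normalized characteristic exponents requires a careful polyhedral analysis of the image of Hironaka's characteristic polyhedron under the successive maps $\weight_i$, together with matching this data against the semigroup of the \qo singularity.
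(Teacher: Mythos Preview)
Your converse direction (from $\GOOD$ to quasi-ordinary) is close in spirit to the paper's: both extract Puiseux roots from the chain of binomial relations and show their pairwise differences are monomials times units. The paper organizes this via an \emph{overweight deformation}: it writes $V(f)$ as the fiber $\mfX_1$ of a one-parameter family whose special fiber $\mfX_0$ is the toric variety cut out by the binomials $u_t^{n_{t+1}} - \rho_t\,\x^\a\u_{<t}^\b$, lifts the explicit parametrization of $\mfX_0$ to $\mfX_1$ by a linearity argument (Lemma~\ref{PG}), and then reads off the roots and their differences (Observation~\ref{Obs:Computing_roots}). Your back-substitution is essentially this same lifting, stated less formally; the deformation viewpoint buys a clean inductive mechanism for controlling which coefficients $Z_\M$ depend on which roots of unity, which is exactly what is needed to compute $\zeta_1-\zeta_2$.

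Your forward direction, however, has a genuine gap. You propose to show that $\widetilde{f^{(1)}}$ is \qo with respect to the projection $(\x,u_0)$ and then invoke the inductive hypothesis. But the invariant $\theinvariant$ is \emph{not} built from the standard polyhedron of $\widetilde{f^{(1)}}$ with respect to $(\x,u_0)$: from the second cycle onward it uses the \emph{weighted} polyhedron $\nupoly{\weight_1}{\widetilde{f^{(1)}}}{\x,u_0}{z_1}$, where $u_0$ carries weight $v(1)\in\IQ^d_\gqz$ rather than the unit vector $e_{d+1}\in\IQ^{d+1}_\gqz$. Even granting that $\widetilde{f^{(1)}}$ is \qo over $\field[[\x,u_0]]$ in the usual sense, the inductive hypothesis would yield information about $\theinvariant(\widetilde{f^{(1)}};\x,u_0;z_1)$ computed with the identity weight on $d+1$ variables and landing in $\IR^{d+1}_\gqz$, which is a different object from the tail of $\theinvariant(f;\x;z)$ (whose polyhedra all live in $\IR^d_\gqz$). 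To make your induction go through you would need to formulate and prove the theorem for an arbitrary weight $\weight$, and there is no available notion of ``\qo with respect to $\weight$'' to serve as the hypothesis. The paper sidesteps this entirely: for the forward direction (Proposition~\ref{sdir}) it invokes Gonz\'alez P\'erez's explicit description of $f$ in terms of its approximate roots $q_0,\ldots,q_g$ (Lemma~35 of \cite{GP1}), from which one reads off directly that Construction~\ref{The_Construction} produces $u_i=q_i$ and terminates with $\theinvariant(f;\x;z)=(\gamma_1;\ldots;\gamma_g;\GOOD)$, with $(\star0)$--$(\star2)$ following from the known semigroup relations among the $\gamma_i$.
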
	
	
The key ingredient for the proof is:
if the last entry is $ \GOOD, $ then
our construction provides that,
after adding the variables $u_1,\ldots,u_g$,
the variety $V=\{f=0\}$ embedded in $ \IA_\field^{d+g+1} $ is the generic fiber of an ``overweight deformation'' of a toric variety. 
This is the monomial variety associated with the $v_i$ 
which is the closure of the orbit $(s_1^n,\ldots,s_d^n,S^{ n\cdot v_{1} },\ldots,S^{n\cdot v_{g} }),$ where $S=(s_1,\ldots,s_d)$ and 
$S^{(\alpha_1,\ldots,\alpha_d)}=s_1^{\alpha_1}\cdots s_d^{\alpha_d}.$ 
Hence we have a parametrization of the toric variety.
Using the overweight deformation 
we can lift 
the parametrization of the toric variety $ \mfX_0 $ to one
of $V = \{ f=0 \} $ embedded in $ \IA_\field^{d+g+1} $ 
after studying the equations of the space of solutions of $V$ in $(\field[[S]])^{d+g+1}$. 
This yields the roots of $ f $ and hence its discriminant \wrt $ z$ which happens to be a monomial times a unit in $\field[[ \x ]].$

{\label{CHANGE:05}On the other hand, if $ f $ is \qo then a direct computation of $ \theinvariant (f;\x;z) $ using the expression of $f$ in terms of ``approximate roots", following \cite{GP1}, gives that the last entry of  $ \theinvariant (f;\x;z) $ must be infinite, this is Proposition \ref{sdir}.}

As an other application of our invariant, we prove that when $f$ is \qo  $(\, v_1, \, v_2, \, \ldots, \, v_g \, ) $ determines a
system of generators of the semi-group of \qo hypersurface $ \{f=0\} $ and thus its topology when $ \field = \IC$
and $ f $ complex analytic.\\

We point out here that this paper is also inspired by \cite{ACLM_Newton} where they give a characterization of quasi-ordinary singularities using Newton trees. Our first attempt to prove our theorem was by using the theorem in \cite{ACLM_Newton}. \\ 

It is important to indicate that our invariant is not defined using the roots of $f$ and that the use of the roots is the classical way to define the generators of the semi-group \cite{GP2}. Actually, this was one of our primary motivations: 
{\em how to determine the invariants of a \qo singularity from its defining equation ?} 
When answering this question, we are searching to generalize the known invariants of quasi-ordinary singularities
to singularities which are defined by more general polynomials, for which the shape of the roots is unknown or very difficult to handle, in contrary to the \qo case.\\

The structure of the paper is as follows: in the first section we recall some basic facts about quasi-ordinary singularities. Section $2$  is devoted to weighted Hironaka's characteristic polyhedra. In section $3,$ we introduce the invariant
$ \theinvariant (f;\x;z) $ and show that its last entry is $\GOOD$ when $f$ is quasi-ordinary. Section $4$ is the last section and is devoted to the proof  of the other direction: if the last entry of $ \theinvariant (f;\x;z) $ is $\GOOD,$ $f$ is quasi-ordinary.    

\medskip

We shall use the notation in bold letters for the tuples $\x=(x_1,\ldots,x_d)$, and for $\x^\Alpha =x_1^{\alpha_1}\cdots x_d^{\alpha_d}$.
Throughout the whole paper the product order plays an important role. 
Therefore 
let us recall its definition.
		
\begin{NotIntro}	
\label{Not:poly_ordering}
	For $ v, w \in \IQ^d_\gqz $ we have:
	$$
		\begin{array}{lcl}
			v \geq_\indexorder w  	& :\Leftrightarrow & v \in w + \IQ^d_\gqz. \\[3pt]
			v >_\indexorder w  		& :\Leftrightarrow & v \geq_\indexorder w \;\; \wedge \;\; v \neq w.
		
		\end{array}
	$$
	Note that $ v \not\geq_\indexorder w $ does not imply  $ v <_\indexorder w $.
	Further, $ v =_\indexorder w $ (by which we mean $v \geq_\indexorder w  $ and $ v \leq_\indexorder w $)  is equivalent to $ v = w $.
\end{NotIntro}

%
%
%
%
%
%
%
%
%
%
%
%
%
%
%
%
%

\medskip

{\em Acknowledgments:}
The authors would like to thank Evelia Garcia-Barroso, Vincent Cossart, Dale Cutkosky, Adam Parusinski, Guillaume Rond, and Bernard Teissier for stimulating discussions and helpful comments.
Furthermore, they thank the anonymous referee for many important comments and corrections.

%
%
%
%
%
%
%
%
%
%
%
%
%
%
%
%
%

\medskip

\section{Quasi-ordinary singularities}

An equidimensional germ $ ( V , 0) $ of dimension $ d $ is {\it quasi-ordinary} 
if there exists a finite projection $ \pi : (V, 0) \rightarrow (\IA_\field^d,0) $
whose branch locus is a normal crossing divisor. 
If $ ( V, 0) $ is a hypersurface, $ (V, 0) \subset ( \IA_\field^{d+1},0) $, then $ V $ is
defined by a single equation $ f = 0$,  where  $f \in \field [[ x_1, \, \ldots, \, x_d ]] [z] $ is 
a Weierstrass polynomial 
whose discriminant with respect to $z$ is of the form
$\Delta_z f = x_1^{\delta_1}\cdots x_d^{\delta_d} \epsilon,$ where $\epsilon $ is a unit in  $\field [[ x_1,\ldots,x_d]]$  and
$(\delta_1, \, \ldots, \,\delta_d) \in \IZ^d_{\geqslant 0}$. 
In these coordinates the projection $\pi$ is 
induced by the inclusion
$$
\field [[ x_1,\ldots,x_d]] \hookrightarrow \field [[ x_1,\ldots,x_d]][ z]/ \langle f \rangle.
$$

\begin{Def} 
	Let $f\in \field[[\x]][z]$ be a polynomial in $z $. 
	\begin{enumerate} 
		\item 
		The polynomial $f$ is said to be {\em quasi-ordinary} if its discriminant as a polynomial in $ z $ is of the form $ \Delta_z f = x_1^{\delta_1}\cdots x_d^{\delta_d} \epsilon,$ where $\epsilon $ is a unit in  $\field [[ \x ]]$.

\smallskip 

		\item 
		The projection induced by the morphism $\field[[\x]]\longrightarrow \field[[\x]][z]/\langle f \rangle $ is said
		to be a {\em quasi-ordinary projection} if there exists a change of variables respecting  the projection (i.e., changes of the type $ x_i\mapsto x_i +h_i(\x) $ 
		 with $ h_i \in \field[[x]]$)
		such that the expression of $ f $ in the new variables is a quasi-ordinary polynomial.
	\end{enumerate}
\end{Def} 

\noindent 
{Note: Changes in $ z $ do not affect the discriminant of $ f $ as a polynomial in $ z $, hence we consider in (2) only changes in $ (\x) $.}

\smallskip

From now on, we assume that the hypersurface $(V,0)$ is analytically irreducible, i.e., $ f $ is irreducible in $ \field[[ \x]] [z].$ 
A crucial building {block} in the theory of quasi-ordinary singularities is

\begin{Thm}[Abhyankar-Jung Theorem]
Let $f\in \field [[\x]][z]$ be a quasi-ordinary polynomial of degree $n$ in $z.$
The roots of $f$ are fractional power series.
More precisely, if $ f $ is irreducible, they belong to the ring $\field [[x_1^{1/n}, \, \ldots,\, x_d^{1/n}]].$
\end{Thm}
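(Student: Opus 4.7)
The plan is to induct on the dimension $d$. The base case $d=1$ is the classical Newton--Puiseux theorem: a Weierstrass polynomial $f\in\field[[x_1]][z]$ of degree $n$ whose discriminant $x_1^{\delta_1}\epsilon$ is nonzero has roots distinct in an algebraic closure of the fraction field, and one constructs them explicitly by an iterative Newton-polygon algorithm, producing exponents in $\frac{1}{n}\IZ_\gqz$ (the denominator is controlled by $n$ because $\field$ is algebraically closed of characteristic zero and the Galois group of the splitting cover of $\field(( x_1))$ is cyclic of order dividing $n$).

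For the inductive step, make the ramified base change $\psi: x_i \mapsto t_i^n$; it suffices to show that $\tilde f := \psi^*(f)$ splits into linear factors in $\field[[t_1,\ldots,t_d]][z]$. The key observation is that the discriminant of $\tilde f$ is again a monomial $t_1^{n\delta_1}\cdots t_d^{n\delta_d}$ times a unit, so the finite extension
$$\field[[t_1,\ldots,t_d]] \hookrightarrow \field[[t_1,\ldots,t_d]][z]/\langle \tilde f\rangle$$
is \'etale away from the normal-crossings divisor $\{t_1\cdots t_d=0\}$. By Abhyankar's lemma together with the fact that the tame \'etale fundamental group of the complement of a normal-crossings divisor in $\mathrm{Spec}\,\field[[t_1,\ldots,t_d]]$ is abelian, isomorphic (in characteristic zero) to $\widehat{\IZ}^d$, the cover becomes trivial after adjoining sufficiently high roots of each coordinate; a degree count then shows the $n$-th roots already adjoined via $\psi$ suffice.

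A more hands-on, classical alternative follows Jung's strategy: after a Tschirnhaus substitution killing the $z^{n-1}$-coefficient, restrict $f$ to a generic hyperplane (e.g.\ $x_d = 0$ after a generic linear change of the $x_i$), apply the inductive hypothesis to produce roots of $f|_{x_d=0}$ in $\field[[x_1^{1/n},\ldots,x_{d-1}^{1/n}]]$, and then lift these roots one at a time to $\field[[x_1^{1/n},\ldots,x_d^{1/n}]]$ by Hensel's lemma. The quasi-ordinariness hypothesis is exactly what ensures that, after the ramified base change, the reduced roots remain pairwise distinct (because the discriminant specializes to a nonzero monomial), which is precisely the Henselian condition needed for the lifting step. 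The main obstacle in this approach is that a careless specialization can collapse the degree of $f$ in $z$ or merge roots; one must choose the hyperplane generically enough (or pre-arrange the Weierstrass form) to guarantee that the $z$-degree and the quasi-ordinary structure are preserved under the restriction, so that the induction hypothesis genuinely applies.
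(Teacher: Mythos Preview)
The paper does not supply its own proof of this classical theorem; it is stated in Section~1 as background, with references to the original papers of Jung and Abhyankar (and the bibliography lists several further proofs, e.g.\ \cite{Lu}, \cite{KV}, \cite{PR}). Your two sketches --- the \'etale/Abhyankar's-lemma argument and Jung's inductive Hensel-lifting argument --- are both standard routes found in that literature, so there is no in-paper proof to compare against.

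On the substance of your sketches: the first is essentially correct, but the step ``a degree count then shows the $n$-th roots already adjoined via $\psi$ suffice'' needs the irreducibility hypothesis (which the paper imposes just before stating the theorem). With $f$ irreducible, the Galois group of the splitting field embeds as an abelian \emph{transitive} subgroup of $S_n$, hence has order exactly $n$; each inertia group along $\{x_i=0\}$ is then cyclic of order dividing $n$, so the base change $x_i\mapsto t_i^n$ already kills all ramification and the resulting finite \'etale cover of $\mathrm{Spec}\,\field[[t_1,\ldots,t_d]]$ is trivial. Without irreducibility the bound $n$ on the denominators can fail for individual factors. Your second sketch is also correct in outline; the genericity caveat you raise is genuine but routinely handled.

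It is worth noting that the paper \emph{does} prove a close variant by an entirely different method: Theorem~\ref{AJ} in Section~4 shows that if the invariant $\theinvariant(f;\x;z)$ ends with $\GOOD$, then the roots of $f$ lie in $\field[[x_1^{1/n},\ldots,x_d^{1/n}]]$, via an explicit parametrization extracted from an overweight deformation of a toric variety (Lemma~\ref{PG}). Combined with the forward direction of the main characterization this would recover Abhyankar--Jung, but that forward direction (Proposition~\ref{sdir}) invokes the approximate-root description from \cite{GP1}, which itself rests on Abhyankar--Jung; so the paper's machinery is not set up as an independent proof of the classical statement.
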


Here and in the whole article, we implicitly mean that $ f $ is a {\it Weierstrass} polynomial of degree $ n $ in $ z$ if we write $ f $ is of degree $ n $ in $ z $.
Sometimes we even omit the reference to $ z $ if it is clear from the context.

\medskip

Let $\zeta^{(i)}, i \in \{ 1,\,\ldots,\,n \} $, be the roots of $f.$ 
The difference $\zeta^{(i)}-\zeta^{(j)}$ of two different roots divides the discriminant of $f$ in the 
ring $\field[[x_1^{1/n},\,\ldots,\,x_d^{1/n}]].$ 
Therefore 
$$
	\zeta^{(i)}-\zeta^{(j)}=\x^{\lambda_{ij}} \cdot \epsilon_{ij},
$$ 
where $ \epsilon_{ij} $ is a unit in $\field[[x_1^{1/n},\,\ldots,\,x_d^{1/n}]].$ 
These exponents have been introduced by Lipman in 
\cite{LipmanThesis}, and have been applied in \cite{L} or \cite{Zariski}, for example.
It follows from  Proposition 1.3 in \cite{G} that the exponents $ \lambda_{ij} $ are well ordered \wrt the product ordering $ \leq_\indexorder $ and so we name 
them 
$$
	\lambda_1 \, <_\indexorder \, \lambda_2 \, <_\indexorder \, \ldots \, <_\indexorder \, \lambda_g \,,
$$
and we call them the {\it characteristic exponents}.

We can then define the lattices $M_0 :=\IZ^{d} $ and $M_i := M_{i-1} + \IZ \lambda_i$, for
$ i \in \{ 1,\, \dots,\, g \} .$ 
We have that $M_0\subset M_1 \subset \ldots \subset M_g$ and we set
$$
	n_i := [M_i : M_{i-1}], \hspace{10pt} i \in \{ 1,\,\ldots ,\, g \},
$$
where $[M_i : M_{i-1}]$ denotes the index of the subgroup $M_{i-1}$ in $M_i.$

The importance of these exponents comes from the fact that, when $\field=\IC $
and $ f $ is complex analytic, Gau \cite{G} proves
that they determine the topological type of $(V,0).$

We can also define an equivalent data to the characteristic exponents (\cite{KM}, \cite{GP1}), as  follows
\begin{equation}
\label{eq:qo_def_lamda}
	{\gamma}_1 =  \lambda_1 
	\,\,\mbox{ and }\,\,
	{\gamma}_{i+1}- n_i {\gamma}_{i} = \lambda_{i+1} -  \lambda_{i}
	\hspace{10pt}
	\mbox{ for } i \in \{ 1, \,\ldots, \,g-1 \},
\end{equation}
%

%
%
%
%
%
%
%
%
%
%
%
%
%
%
%
%
%

\medskip

\section{Weighted characteristic polyhedra}

For our characterization we introduce polyhedra with respect to a linear map $ \weight $.
These polyhedra play a crucial role in the construction of our invariant and are a generalization of Hironaka's characteristic polyhedra,
 \cite{H} or \cite{CScompl}, section 1.

\medskip

Let $ f \in \field [[ \x ]] [ z ] $ a Weierstrass polynomial of degree $ n \in \IZ_+ $ in $ z $, i.e.,
\begin{equation}
\label{definition_of_n}
	f ( \x, z ) =  z^n + f_1 (\x) \, z^{ n- 1} + \ldots + f_n (\x)
\end{equation}
for some $ f_i (\x) \in \field [[\x]] $, $ i \in \{ 1, \ldots, n \} $.
Consider an expansion of $ f $ of the form
$$
	f = \sum_{\a, b} \coeffforf_{\a, b } \, \x^\a \, z^b,
$$
for certain $ \coeffforf_{\a, b} \in \field $, and $ \a \in \IZ^d_\gqz $, $ b \in \IZ_\gqz $.

\medskip

Further, let
$$
	\weight : \IZ^d_\gqz \to \IQ^c_\gqz \,,
$$
for some $ c \in \IZ_+ $, be the linear map defined by
$$
	\weight ( e_1 ) := \alpha_1 \, , \hspace{5pt}
	\weight ( e_2 ) := \alpha_2 \, , \hspace{5pt}
	\ldots \, , \hspace{5pt}
	\weight ( e_d ) := \alpha_d \, , \hspace{5pt}
$$
for certain non-zero $ \alpha_1, \ldots, \alpha_d \in \IQ^c_\gqz $, $ \alpha_i \neq (0 ,\ldots , 0 ) $, and, for $ i \in \{1, \ldots, d \} $, $ e_i $ denotes the $ i $-th canonical basis vector of $ \IZ^d_\gqz $ (i.e., $ e_i = ( \delta_{ij} )_{j \in \{ 1, \ldots, d \}} $ and $ \delta_{ij} $ is one if $ j = i $ and zero for all $ j $ with $ j \neq i $).

Clearly, this is the linear map {determined} by the $ c \times d $ matrix, also denoted by $ W $, with column vectors $ \alpha_1, \ldots, \alpha_d $,
$$ 
	\weight = ( \, \alpha_1 \, | \, \alpha_2 \, | \, \cdots \, | \, \alpha_d \,) \,.
$$
For $ \a = ( a_1, \ldots, a_d ) \in \IZ^d_\gqz $, we have
$
	\weight (\a ) = 
	\alpha_1 a_1 + \ldots + \alpha_d a_d .
$

\begin{Rk}
	One may consider $ \weight $ also as a map from the monomials of $ \field [[ \x ]] $ to $ \IQ^c_\gqz $.
	This means $ \weight $ assigns to $ x_i $ the ``weight'' $ \alpha_i \in \IQ^c_\gqz $, for all $ i \in \{ 1, \ldots, d \} $.
	In the following we sometimes write also $ \weight ( \x^\a ) $ when we mean $ \weight ( \a ) $.
	We refer also to the first section in \cite{T}.
\end{Rk}
 
\begin{Ex}
\label{Ex:nu}
\begin{enumerate}
	\item
		A first example is the map given by the identity, i.e., $ c = d $ and
		$
			\weight ( \a ) =  \a .
		$
		In the following, we denote this special example by $ \weight_0 $.

	\medskip
	
	\item
		Let $ d = 3 $, and $ c = 2 $.
		Then another example for such a map is the following $ \weight : \IZ^3_\gqz \to \IQ^2_\gqz $ (on $ \field [[ x_1, x_2, x_3 ]] $) which is given by
		$ \weight = 
		\left( \begin{array}{ccc}
			1	&	0	&	2	\\
			0	&	1	&	1
		\end{array}
		\right)
		$, i.e.,
		%
		%
		$		\weight ( x_1 ) = ( 1, 0 ), 
				\weight ( x_2 ) = ( 0, 1 ), 
				\weight ( x_3 ) = ( 2,1 ). $
 		%
 		%
\end{enumerate}
\end{Ex}

\begin{Def}
\label{Def:associated_Polyhedron}
	Let $ f = \sum\limits_{\a, b} \coeffforf_{\a, b } \, \x^\a \, z^b \in \field [[\x]] [z] $ be a polynomial of degree $ n $ and let $ \weight: \IZ^d_\gqz \to \IQ^c_\gqz  $ be a linear map as before.
	\begin{enumerate}
		\item	We define the {\it Newton polyhedron of $ ( f, \x, z ) $ \wrt $ \weight $} as the smallest convex subset of $ \IR^{c + 1}_\gqz $ containing all the points of the set
	%
	%
	$$
		\left\{ \,
			( \weight ( \a ), b ) + \IR^{c + 1 }_\gqz \; \mid\;  \coeffforf_{\a, b } \neq 0 \,
		\right\}
	$$
	%
	%
	and we use the notation $ \cnupoly{N, \weight}{f}{\x,z} $.
		\item	We define the {\it associated polyhedron for $ ( f, \x, z ) $ \wrt $ \weight $} as the smallest convex subset of $ \IR^c_\gqz $ 
		containing all the points of the set
	%
	%
	$$
		\left\{ \,
			\frac{ \weight ( \a ) }{ n - b } + \IR^c_\gqz \; \mid\;  \coeffforf_{\a, b } \neq 0 \, \wedge \,  b < n \,
		\right\}
	$$
	%
	%
	and we use the notation $ \nupoly{\weight}{f}{\x}{z} $.
	\end{enumerate}
\end{Def}

\begin{Rk}
\begin{enumerate}
	\item 
		Based on discussions of the authors a variant of this idea was already mentioned in \cite{BerndCharPoly}, Remark 5.9.
Besides that there is no reference known to the authors, where such a kind of polyhedron has been considered before. A connected notion has been considered 
in \cite{As}.
	
	\item
		The polyhedron $ \nupoly{\weight}{f}{\x}{z} $ has finitely many vertices. 
(This follows with the same arguments as in \cite{CParithm}, Proposition 2.1).

		Further,
		one sees easily that the associated polyhedron $ \nupoly{\weight}{f}{\x}{z} $ is the projection of the Newton polyhedron $ \cnupoly{N, \weight}{f}{\x,z} $ from the point
		$ ( \0, n ) \in  \IR^{c + 1 }_\gqz $
		onto $ \IR^c_\gqz $ corresponding to the cooridnates $ ( \x ) $,
		followed by a homothecy of factor $ \frac{1}{n} $.
	
	\item 
		{If we consider $ \weight $ as a linear map from $ \IR^d $ to $ \IR^c $} then we have $ \frac{ \weight ( \a ) }{ n - b } =   \weight \left ( \frac{ \a }{ n - b } \right) $.
		Hence 
		\begin{equation}
		\label{eq:W_poly=W(poly)}
				 \nupoly{\weight}{f}{\x}{z} = \weight \left( \nupoly{}{f}{\x}{z} \right) + \IR^c_{\geq 0 }, 
		\end{equation}
		where $ \nupoly{}{f}{\x}{z} = \nupoly{\weight_0}{f}{\x}{z} $ is the polyhedron associated to $ ( f, \x, z ) $ (see also Definition 1.2 in \cite{CScompl}).
\end{enumerate}
\end{Rk}


\begin{Ex}
	Let $ d = 3 $, $ c = 2 $, and $ \weight = 
		\left( \begin{array}{ccc}
			1	&	0	&	2	\\
			0	&	1	&	1
		\end{array}
		\right) $ 
	the linear map defined in Example \ref{Ex:nu}(2).
	Consider
	$$
		f = z^2 + 2z x_1 x_2^3 + x_1^2 x_2^6 + x_1^3 x_3 + x_2^2 x_3^3.
	$$
	Then we have
	$$
	\begin{array}{ccl}
				\weight ( x_1 x_2^3 ) = ( 1, 3 ),
				&
				\Rightarrow
				&
				v_1 := ( 1, 3)\, , 
				\\[3pt]
				\weight ( x_1^2 x_2^6 ) = ( 2, 6 )&
				\Rightarrow
				&
				v_2 := v_1 = ( 1, 3) \, , 
				\\[3pt]
				\weight ( x_1^3 x_3 ) = ( 5, 1 ),&
				\Rightarrow
				&
				v_3 := ( \frac{5}{2}, \frac{1}{2}) \, ,  
				\\[3pt]
				\weight ( x_2^2 x_3^3 ) = ( 6, 5 )&
				\Rightarrow
				&
				v_4 := ( 3, \frac{5}{2}) \, ,
			\end{array}			
	$$	
	where $ v_i $ denotes the corresponding point $ \frac{ \weight ( \a ) }{ n - b } $ in the polyhedron.	
	Thus the vertices of $ \nupoly{\weight}{f}{\x}{z} $ are $ v_1 = ( 1, 3) $ and $ v_3 = ( \frac{5}{2}, \frac{1}{2}) $, whereas $ v_4 = ( 3, \frac{5}{2}) $ lies in the interior.
	
	If we put
	$
		y := z + x_1 x_2^3
	$,
	then
	$
		f = y^2 + x_1^3 x_3 + x_2^2 x_3^3.
	$
	Clearly, the point $ v_1 $ does not appear in $ \nupoly{\weight}{f}{\x}{y} $ which has only $ v_3 $ as vertex. 
	Moreover, $ v_3 $ can not be eliminated by a further change of $ y $.
	This means the polyhedron $  \nupoly{\weight}{f}{\x}{y} $ is minimal \wrt the choice of $ y $,
	 in the sense that there is no choice $ \widetilde{y} $ for $ y $ such that $ \nupoly{\weight}{f}{\x}{\widetilde{y}} \subsetneq  \nupoly{\weight}{f}{\x}{y} $	
\end{Ex}

\medskip

\noindent
{\bf Choices for z:}
As we have seen, the polyhedron $  \nupoly{\weight}{f}{\x}{z} $ depends heavily on the choice of $ z $.
Thus we seek for $ \widetilde{z} $, where $ \widetilde{z} = z + h ( \x ) $, for some $ h(\x) \in \field [[\x]] $, such that 
$
 \nupoly{\weight}{f}{ \x }{ \widetilde{z} } \subset \IR^c_\gqz
$ 
becomes minimal \wrt inclusion.
For \qo singularities one can achieve this by considering so called $ P $-good coordinates 
(see for example \cite{ACLM_Newton}, Definition 3.1 and Lemma 4.6).

More generally, since $ \field $ has characteristic zero, we can choose $ z $ in such a way that it has maximal contact with $ f $. 
Thus we attain the desired change in $ z $ by performing the so called {\em Tschirnhaus transformation}: 
Given any $ z $ such that $ f $ is of degree $ n $ as in \eqref{definition_of_n}, we set 
\begin{equation}
\label{eq:Tschirnhaus}
	\widetilde{z} := z + \frac{1}{n} \cdot f_1 (\x ) . 
\end{equation}
This determines a hypersurface of maximal contact for $ f $ and we can replace $ z $ by $ \widetilde{z} $.
Then we get
$$
	f ( \x, z ) =  \widetilde{z}^n + g_2 (\x) \, \widetilde{z}^{ n- 2} + \ldots + g_n (\x)
$$
for some $ g_i (\x) \in \field [[\x]] $, $ i \in \{ 2, \ldots, n \} $, and $ g_1 (\x ) \equiv 0 $.
By Proposition 6.1 in \cite{BerndCharPoly} the associated polyhedron $ \nupoly{}{f}{\x}{\widetilde{z}} $ is minimal \wrt choices for $ \widetilde{z} $ and since $ \nupoly{\weight}{f}{\x}{\widetilde{z}} = \weight \left( \nupoly{}{f}{\x}{\widetilde{z}} \right)  + \IR^c_{\geq 0 } $, the same is true for $ \nupoly{\weight}{f}{\x}{\widetilde{z}} $.

\medskip

\noindent
{\bf Changes in (x):}
The shape of the polyhedron $ \nupoly{W}{f}{\x}{z} $ plays an central role in the construction of our invariant, Construction \ref{The_Construction}.
In particular, a good choice for the variables $ ( \x ) $ is essential if $ \weight = \weight_0 $ as the following example illustrates.

\begin{Ex}
	\label{Ex:changes_in_x_nec}
	Consider the polyhedron \wrt $ \weight_0 $ 
	for the polynomial $ f ( \x, z) = z^2 + x_1^2 ( x_1 + x_2) $.
	Then $ \nupoly{W}{f}{\x}{z} $ has two distinct vertices.
	On the other hand, if we choose the coordinates $ y_1 := x_1 $ and $ y_2 := x_1 + x_2 $, then we get that $ f ( \y, z ) = z^2 + y_1^2 y_2 $ and the corresponding polyhedron has exactly one vertex.
	For the importance of this difference, we refer to Construction \ref{The_Construction}, definition of $ \kappa_1 $. 
\end{Ex} 

Therefore one may ask if there exist choices for $ ( \x, z ) $ such that the polyhedron $  \nupoly{\weight_0}{f}{\x}{z} $ is minimal \wrt inclusion in $ \IR^c_\gqz $.
(Of course, one may ask this question in the more general case for arbitrary $ \weight$, but since this is becoming more complicated and since it is not needed here, we do not discuss this).

We want to make only changes in $ ( \x , z  ) $ which respect the projection of the singularity determined 
by $ f $ onto the affine space given by the variables $ ( \x ) $, i.e., on the level of rings which respect the inclusion
\begin{equation}
\label{eq:projection}
	\field[[ \x ]] \hookrightarrow \field [[\x ]][z] / \langle f \rangle .
\end{equation}
Thus we seek for $ ( \widetilde{ \x }, \widetilde{ z } ) $, where $ \widetilde{ x_i } = x_i + h_i ( \x ) $ and $ \widetilde z = z + h ( \x ) $, for some $ h_i ( x), h(x) \in \field [[\x]] $ ($ i \in \{ 1 , \ldots, d \} $), such that 
$
 \nupoly{\weight}{f}{\widetilde \x }{ \widetilde z } \subset \IR^c_\gqz
$ 
becomes minimal \wrt inclusion.
(Note that we use for $ ( \widetilde \x ) $ also the map $ \weight_0 $ induced by the identity; in particular, we do not require any compatibility with the original $ \weight_0 $ defined by $ ( \x ) $; 
for example, we allow a change of the form $ \widetilde x_2 := x_2 + x_1 $).

The coordinates $ (\widetilde \x, \widetilde z ) $ can be constructed in the following way:
We equip $ \IQ^c_\gqz $ with any total ordering, e.g., the one given by the lexicographical order of the entries.
First, we change $ z $ to $ \widetilde{z} $ as above such that $ \nupoly{\weight_0}{f}{\x}{\widetilde{z}} $ becomes minimal \wrt the choice of $ \widetilde{z} $.
{(Note that this is a change by an element in $ \field[[\x]] $, e.g.~ $ f = (z - x(1-x_1)^{-1} )^2 - x_2^3 $); here $(1-x_1)^{-1}=1+x_1+x_1^2+\cdots$.}
If $ \nupoly{\weight_0}{f}{\x}{\widetilde{z}} $ has only one vertex or is empty, then it is minimal \wrt the choice of $ ( \x ) $ and we are done.

Suppose the latter is not the case.
Consider the two smallest vertices and try to eliminate one of them by changes in $ ( \x ) $ respecting (\ref{eq:projection}).
If this is not possible, the smallest vertex is fixed and we compare the next two smallest vertices and so on.
{Note: In fact, for our constructions it is sufficient if we know that there are at least two vertices that are fixed. i.e., that cannot be eliminated by any changes in $ \x $.}

If we can eliminate one of the vertices by changing $ ( \x ) $, then we perform this change and start over again.
{Let us point out that we possibly have infinitely many changes of this kind (e.g., consider $ f = z^2 - x_1^3 (x_2 - x(1- x_3)^{-1}) $ and the changes in $ x_2 $ that we have to make). 
	But this is not a problem since we are allowed to change $ ( \x ) $ by elements in $ \field[[\x]] $ and when making infinitely many changes, by construction we build a series which is convergent in $ \field[[\x]]$.}
Note: Changes in $ ( \x ) $ do not affect the minimality \wrt the choice of $ \widetilde z $. More precisely, the coefficient of $  \widetilde  z^{n-1} $ always remains zero.

In the case of a \qo hypersurface singularity this process will end with a polyhedron that has exactly one vertex or is empty.
Thus in our context the shape of the obtained polyhedron will be unique.

\medskip

\begin{Rk}
	\begin{enumerate}
		\item
		Suppose $ z  $ is such that $ \nupoly{}{f}{\x}{z} := \nupoly{\weight_0}{f}{\x}{z} $ is minimal \wrt the choice for $ z $.
		Then the (unique!) polyhedron obtained coincides with Hironaka's characteristic polyhedron
		associated to $ ( f, \x ) $.
		(See for example, Definition 1.7 and Theorem 1.8 in \cite{CScompl}).
		By \eqref{eq:W_poly=W(poly)}, this implies that for any linear map $ \weight $ the polyhedron  $ \nupoly{\weight}{f}{\x}{z}  $ is also minimal \wrt the choice of $ z $.		
		\item
		One could make the definition of $ \nupoly{\weight}{f}{\x}{z} $ more general by allowing a whole system of variables $ ( \z ) = ( z_1, \ldots, z_c ) $ or by extending $ \weight $ to $ \field[[ \x ]][ \z ] $.
		In fact, one might even replace $ \field[[\x]][ \z] $ by a regular local ring $ R $ with regular system of parameters $ ( \x, \z ) $ and may consider instead of an element $ f $ an ideal $ J \subset R $.
		Similar ideas have been discussed in \cite{BerndCharPoly}, Remark 5.9.
		Since this is not important for our aim, we do not discuss these things in more detail.
	\end{enumerate}
\end{Rk}

\medskip

For the later use (Lemma \ref{Lem:Cycles_works_nu_qo}) we introduce in our special situation the notion of $ \nu $-\qo singularities \wrt a linear map $ \weight : \IZ^d_\gqz \to \IQ^c_\gqz $.
This generalizes the notion of $ \nu $-\qo polynomial which was introduced by Hironaka; more precisely, $ f $ is $ \nu $-\qo if it is $ \nu $-\qo \wrt $ \weight_0 $.

\begin{Def} 
	\label{Def:nu_qo_W}
	{Let $ f \in \field [[ \x ]] [ z ] $ be of degree $ n $ 
	and let $ \weight : \IZ^d_\gqz \to \IQ^c_\gqz $ be a linear map. 
}
{The polynomial $f$ is said to be {\em $ \nu $-quasi-ordinary \wrt $ W $} if there exists a choice for $ (\widetilde{\x},\widetilde{z}) $ respecting the inclusion 
		$\field[[\x]]\longrightarrow \field[[\x]][z]/\langle f \rangle $
		such that $ \nupoly{\weight}{f}{\widetilde \x}{\widetilde z} $ either is empty or has exactly one vertex.}
		
	{In this case, we also call the projection induced by $\field[[\x]]\longrightarrow \field[[\x]][z]/\langle f \rangle $ a {\em $ \nu $-quasi-ordinary projection \wrt $ W $}.}
\end{Def}

Suppose $ f $ is $ \nu $-\qo and $ \nupoly{\weight}{f}{\x}{z} $ has only one vertex $ v \in \IQ^c_\gqz $.
Since we have assumed $ f $ to be a Weierstrass polynomial, the Newton polyhedron $ \cnupoly{N, \weight}{f}{\x,z} $ must have a one-dimensional face starting from the point $ ( \0, n ) \in \IQ^{ c + 1 }_\gqz $ and projecting down to the point $ v $.
In particular, $ v $ can not be eliminated by changes in $ z $ and thus the same is true for the described face in the Newton polyhedron.
Clearly, this condition on the Newton polyhedron is equivalent to the one given in the definition.

For $ \weight = \weight_0 $ we obtain the usual definition of $ \nu $-\qo polynomials (see for example, \cite{ACLM_Newton}, Definition 1.11) for these special shaped $ f $.
The equality \eqref{eq:W_poly=W(poly)} shows that a $ \nu $-\qo polynomial is  $ \nu $-\qo polynomial \wrt any linear map $ \weight $.

On the other hand, 
there exist polynomials which are $ \nu $-\qo \wrt some $ \weight \neq \weight_0 $ but 
not $ \nu $-\qo in the usual sense.

\medskip

\begin{Def}
\label{Def:inti_v_W}
	Let $ f = \sum\limits_{\a, b} \coeffforf_{\a, b } \, \x^\a \, z^b  \in \field [[ \x ]] [ z ] $ 
	be of degree $ n $ as before.
	Let $ \weight : \IZ^d_\gqz \to \IQ^c_\gqz $, $ c \in \IZ_+ $, be the linear map determined by vectors $ \alpha_1, \ldots, \alpha_d \in \IQ^c_\gqz $.
	Let $ v \in \nupoly{\weight}{f}{\x}{z} $ be a vertex of the associated polyhedron. 
	
	We define the {\it initial form (or initial part) of $ f $ at $ v $ \wrt $ \weight $} by
	%
	%
	%
	$$
		F_{v,\weight} := in_{v,\weight} ( f ) := Z^n + \sum_{ (\a, b)\, :\, (\ast)} \coeffforf_{\a, b} \, X^\a \, Z^b \in \field [ \X, Z] ,
	$$
	%
	%
	where the sum ranges over those $ ( \a, b ) \in \IZ^{ d + 1 }_\gqz $ fulfilling
	$$
		\frac{ \weight ( \a ) }{ n - b } = v \, .
		\eqno{(\ast)}
	$$
\end{Def}

Since $ F_{v, \weight} $ lies in the graded ring \wrt $ \weight $ we use capital letter.
{Note that $ F_{v, \weight} \neq Z^n $ since $ v $ is a point appearing in the polyhedron.}

\begin{Ex}
\label{Ex:Initialnoteasy}
	Consider $ f = {z}^2 - x^{ 21 } - x^{18} {u}^3 \in \field [[ x, {u} ]][{z}] $.
	Let $ \weight : \IZ^2_\gqz \to \IQ_\gqz $ be the linear map defined by $ \weight ( x ) = 1 $ and $ \weight ( {u} ) = 1 $.
	Then $ \nupoly{\weight}{f}{\x{,u}}{{z}} $ has only the vertex $ v = \frac{21}{2} $ and
	$$
		F_{v,\weight} = {Z}^2 - X^{ 21 } - X^{18} {U}^3.
	$$
\end{Ex}

Of course, it was not used in the previous definition that $ f $ is irreducible. 
Moreover, we want to point out that $ f $ being irreducible does not imply $ F_{v,\weight} = ( Z^{n_1} + \coeffforf_1 \x^{\a_1} )^{e_1} $, for some $ \coeffforf_1 \in \field $, $ n_1, e_1 \in \IZ_+ $ and $ \a_1 \in \IZ^d_\gqz $:

In fact, the possible shape of $ F_{v,\weight} $  is connected with the question how many solution the linear system 
$
	W ( \a ) = v \in \IQ^{c}_\gqz 
$
has.
If there is a unique solution, say $ \a_1 \in \IZ^d_\gqz $, and if $ f $ is irreducible, then  $ F_{v,\weight} = (Z^{n_1} + \coeffforf_{1} \, \X^{\a_1})^{ e_1} $, for some $ \coeffforf_{1} \in \field $ and $ e_1, n_1 \in \IZ_+ $
(since $ \field $ is algebraically closed).
For example, this is the case for the usual $ \nu $-\qo polynomials,
Theorem 1.5 in \cite{ACLM_Factor},
{or Theorem 2.4 in \cite{GR_Bernd}}.
In the next section, we prove a similar result 
(Proposition \ref{Prop:ProductofBinomials})
for particular linear maps $ \weight $ appearing in our process.

%
%
%
%
%
%
%
%
%
%
%
%
%
%
%
%
%

\medskip

\section{The invariant and the main theorem}

We can now give the construction of our invariant.
The main result which we also state in this section is that the invariant detects whether a given irreducible hypersurface singularity is \qo or not.
But first, we need some simple techniques.

\medskip

Consider the ring 
$$ 
	R := \field[[\x]][u,z]/ \langle z - (u^{m} - q ( \x, u )) \rangle ,
$$ 
for some $ m \in \IZ_+ $ and $ q ( \x, {u} ) \in \field [[\x]]{[u]} $ with $ \deg_{u} (\, q (\x,{u})\,) < m $.
Let $ f $ be an element in $ \field[[ \x ]][{u}] $.
There is a unique (!) representative for the class of $ f $ in $ R $ contained in
\begin{equation}
\label{eq:restricted_powers}
	\field[[ \x ]][{u}]_{ < m }[{z}] := 
		\{\, 
			\sum_{  (\a,b,c) \in \IZ^{ d + 2}_\gqz } \, \coeffforfmodified_{\a,b,c} \, \x^\a \, {u^b} \, {z^c} 
			\;\mid\;
			 \coeffforfmodified_{\a,b,c} \in \field \, \wedge \, 
			 {0 \leq b < m }
		\,\} \, ,
\end{equation}
i.e., where the only powers of $ {u} $, which may appear, are $ {u, u^2, \ldots, u^{ m - 1}} $;
whenever $ {u^m} $ appears it is replaced by $ {z + q (\x , u)} $.

Thus we can identify $ \field[[ \x ]][{u}]_{< m}[{z}] $ with $ R = \field[[ \x ]][u,z] / \langle \, {z - ( u^m - q(\x,u) )} \, \rangle $, where we choose the {representative} of a class uniquely as an element in (\ref{eq:restricted_powers}).

\medskip

The important case for us is when $ q $ is a monomial, say $ q = \rho_\a \,\x^\a $, for some $ \a \in \IZ^d_\gqz $ and $ \rho_\a \in \field^\times $.
This case naturally arises in our characterization.
Namely, there appear irreducible polynomials $ f \in \field [[ \x]][u] $ which are $ \nu $-\qo \wrt some linear map $ \weight $ (on $ \field[[\x]] $) and whose initial form at $ v $ is
\begin{equation}
\label{eq:intial_good}
	F_{v,\weight} = (U^{m} - \coeffforf_{\a} \, \X^{\a})^{ e}  ,
\end{equation}
for some $ \coeffforf_{\a} \in \field^\times $ and $ e, m \in \IZ_+ $.
Here, we assume that $ \nupoly{W}{f}{\x}{u} $ is minimal for the choice of $ u $ and, further, $ v $ denotes its sole vertex. 

In order to detect more refined information on the variety defined by $ f $ we pass to a higher dimensional ambient space.
We do this via the embedding that is given by $ \field [[\x]][u,z] \to  \field [[\x]][u]$, where $ z $ is mapped to $ u^{m} - \coeffforf_{\a} \, \x^{\a} $.
Then we consider the unique representative $ \widetilde{f^+} \in \field[[ \x ]][{u}]_{ < m }[{z}] $ of the image of $ f $ in $ \field[[ \x ]][u,z] / \langle \, {z - ( u^m  - \coeffforf_{\a} \, \x^{\a} )} \, \rangle $.

We extend $ W $ to a linear map $ \weight_+$ on $ \field[[\x]][u] $ by setting $ \weight_+ ( u ) := v $.
If $ \widetilde{ f^+ } $ is $ \nu $-\qo \wrt $ \weight_+ $ then we would like to repeat the previous step.
The initial form of $ \widetilde{ f^+ } $ at the unique vertex is not necessarily of the shape \eqref{eq:intial_good} (as we can see in the following example), but we can choose in a canonical way a representative in $ R $ that is of the desired form.
 \begin{Ex}
 	\label{Ex:InitialNotGoodForm}
 	Let us have a look at 
 	$$ 
 	f = ( u^2 - x^3 )^4  - 2 x^5 u (u^2 - x^3)^2 + x^{13} \in \IC[x,u] .
 	$$
 	The polyhedron $ \nupoly{W_0}{f}{x}{u} $ has exactly one vertex $ v = \frac{3}{2} $ which can not be eliminated.
 	(Recall: $ W_0$ is given by the identity matrix).
 	The corresponding initial form is $ F_{v, \weight_0 } =  ( U^2 - X^3 )^4 $ and we set
 	$
 	z := u^2 - x^3.	
 	$
 	
 	Then $ \widetilde{ f^+ } =  z^4  - 2 x^5 u z^2 + x^{13} $ and it is $ \nu $-\qo \wrt $ W_+ $.
 	Note: $ W_+ ( u ) = \frac{3}{2} $.
 	Moreover, $ v_+ = \frac{13}{4} $ is the only vertex of the minimal polyhedron $ \nupoly{W_0}{\widetilde{ f^+ } }{x,u}{z}, $ and the initial form
 	$$
 	\widetilde{F^+}_{v_+, \weight_+ } = Z^4  - 2 X^5 U Z^2 + X^{13}
 	$$
 	is not a binomial as desired. 
 	But, after replacing $U$ by $X^{v_+},$ in the ring whose monomials are in the positive part of the lattice $ \IZ + \gamma \IZ $, $ \gamma := v = \frac{3}{2} $, the initial form (when the polyhedron have only one vertex)  can be written as a binomial to some power, namely $ (Z^2 - X^{\frac{13}{2}} )^2 .$ 
 	In order to obtain an honest polynomial we need to represent $ X^{\frac{13}{2}} $ by a monomial in  $\field [[x]][u]_{<2}[z],$ where the weight of $x$ is $1,$ the weight of $u$ is $3/2.$ 
 	By construction, there is a unique such a monomial which is $ x^5 u.$ Then the canonical form of $ f^{+}$ is given by 
 	$$
 	\begin{array}{rcl}
 	f^{+} & = & 
 	(z^2-x^5u)^2 + z^4  - 2 x^5 u z^2 + x^{13} - (z^2-x^5u)^2 =
 	\\[3pt]
 	& = & (z^2 - x^5 u )^2 - x^{10} z, 
 	\end{array}
 	$$
 	where $(z^2 - x^5 u )^2$ is the initial part that we keep as it is
 	and where we have put $ u^2 = z + x^3 $ in the remaining part.
  	
 	Note that the monomial $ x^{10} z $ corresponds to the point $ \frac{10}{3} = \frac{40}{12} > \frac{39}{12} = \frac{13}{4} $ in the polyhedron and hence lies in the interior.
 \end{Ex}
\smallskip

More generally, the arguments of the example can be applied to construct  an appropriate representative for the class of $ f $. 
Let us explain how this works.
Since we want to iterate this procedure, we introduce some notations.
Set $ v_1 : = v $.
For $ t \geq 0 $, 
suppose we have given 
$$ 
	v_1, \ldots, v_{t+1} \in \IQ^d_\gqz 
	\ \ \ \mbox{ and } \ \ \  
	n_1, \ldots, n_{t+1} \in \IZ_+ 
$$ 
such that, 
for all $ i \in \{ 0, \ldots, t  \} $,
\begin{itemize}
	\item $v_{i+1} >_{poly} n_{i}v_i$, if $ i > 0 $, and
	\item $v_{i+1} \not\in \IZ^d+\IZ v_1+\cdots+\IZ v_i$.
\end{itemize}

Let us begin with a useful lemma.
Let $ L $ be the lattice 
$$
	L := \IZ^d+\IZ v_1+\ldots+\IZ v_{t+1}.
$$
We denote by $L_{\geq 0}$ the semigroup of positive elements in $L.$ 
We define 
$$\field [\x^{L_{\geq 0}}] :=\{\sum_{finite}a_\alpha \x^{\alpha};a_\alpha \in \field,\alpha \in L_{\geq 0}\};$$ the set $\field [\x^{L_{\geq 0}}]$ has a natural ring structure.  We also consider the ring
$\field [[\x^{L_{\geq 0}}]]$ which is defined in a natural way as in the definition of $\field [\x^{L_{\geq 0}}] $, but where the sum may be infinite.

Let $ h \in \field [[\x^{L_\gqz }]][z] $ 
be a monic polynomial in $ z $ such that the monomials appearing in the coefficients of $z^l$ have exponents in $ L_\gqz $, the non-negative part of the lattice $ L $. 
Note that $ \IZ^d \subsetneq L \subsetneq \IQ^d $, 
i.e., we work with fractional exponents.

We naturally have a weight $ W_0: L \longrightarrow \IQ^d$ on these
monomials, which is simply defined by $W_0(\x^{\a})=\a $.
(Since it is the extension of $ \weight_0 $ on $ \IZ^d $, we also use the name $ \weight_0 $ here).
We write $ h = z^n + \sum h_i z^{n-i} $. 
As in Definition \ref{Def:associated_Polyhedron},
we can associate a polyhedron that we will denote again by $\nupoly{W_0}{h}{\x}{z} \subset \IR^d_\gqz $.

\begin{Lem}
\label{Lattice}	
	With the above notations, let $ h \in \field [[\x^{L_\gqz }]][z] $.
	If $ \nupoly{W_0}{h}{\x}{z} $ has a unique vertex $v_{t+2} $,
	then the initial part of $h$ at $v_{t+2}$
	(\wrt $ W_0 $)
	is a product of binomials, i.e., 
	it is of the form   
	$$
	H_{v_{t+2}} := in_{v_{t+2}, W_0} (h) = 
	\prod_{i=1}^{d_{t+2}} (Z ^{n_{t+2}} - \coeffforf_{i} \, \X^{\Alpha} )^{ e_{t+2,i}}
	$$
	for some $ \Alpha \in L $, 
	and $ n_{t+2} $, $ d_{t+2} $, $ e_{t+2,i} \in \IZ_+ $, 
	and $ \coeffforf_{i} \in \field $, pairwise different. 
\end{Lem}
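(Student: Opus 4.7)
The plan is to emulate the argument from the cited $L = \IZ^d$ case (\cite{Evelia_Pedro_Compositio, GR_Bernd}) in the present lattice setting, relying on the quasi-homogeneity of the initial form, the discreteness of $L$ in $\IQ^d$, and algebraic closedness of $\field$. Setting $v := v(t+2)$, I would first analyze the step size along the face. Every monomial $\rho_{\a, b}\, \x^\a z^b$ of $h$ that contributes to $H_v$ satisfies $\a = (n-b)v \in L$, so the set $\{\, k \in \IZ_+ \mid k v \in L \,\}$ is nonempty; its minimum $n_{t+2}$ is well-defined, and $\Alpha := n_{t+2} v$ lies in $L_\gqz$. The minimality of $n_{t+2}$ forces $n_{t+2} \mid (n-b)$ for every such face-monomial, and hence $n - b = k\, n_{t+2}$ and $\a = k \Alpha$ for some $k \in \IZ_+$. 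After verifying $n_{t+2} \mid n$ from the geometry of the face of the Newton polyhedron containing the leading vertex $(\0, n)$, this leads to an expression of the shape
$$
H_v = Z^n + \sum_{k=1}^{N} c_k \, \X^{k \Alpha} Z^{n - k n_{t+2}}, \qquad N := n/n_{t+2},
$$
for suitable constants $c_k \in \field$.

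The last step is purely formal. Substituting $U := Z^{n_{t+2}}$ and $V := \X^\Alpha$, the polynomial becomes $\widetilde H (U, V) = U^N + \sum_{k=1}^N c_k V^k U^{N-k}$, a monic polynomial of degree $N$ in $U$, which factors over $\field$ as $\prod_{i=1}^N (U - \sigma_i V)$ with $\sigma_i \in \field$. Grouping equal roots gives the claimed form
$$
H_v = \prod_{i=1}^{d_{t+2}} \bigl( Z^{n_{t+2}} - \rho_{t+2, i} \, \X^\Alpha \bigr)^{e_{t+2, i}}
$$
with pairwise distinct $\rho_{t+2, i} \in \field$.

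The main technical hurdle in implementing this plan is the lattice bookkeeping: verifying $\Alpha \in L$ (immediate from its construction), the divisibility $n_{t+2} \mid n$ (the subtle point, coming from the fact that $(\0, n)$ sits at the top of the face that projects to $v$), and that the formal substitution $V = \X^\Alpha$ is meaningful inside $\field[[\x^{L_\gqz}]][z]$. An alternative route is to reduce directly to the cited $L = \IZ^d$ statement via a ramified cover $x_j = y_j^m$, where $m \in \IZ_+$ clears the denominators of $v(1), \ldots, v(t+2)$; in that approach, however, one must keep track of the resulting sublattice $mL \subset \IZ^d$ rather than all of $\IZ^d$, so the direct quasi-homogeneous argument above seems cleaner.
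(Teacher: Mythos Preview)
Your plan is the paper's proof in slightly different language. The paper takes the edge of the Newton polyhedron from $(\0,n)$ to its other extremity $(\a,i)$, sets $(\b,c):=\frac{1}{\mu}(-\a,n-i)$ for the largest $\mu$ with this quotient in $L\times\IZ$, observes that $\X^{-\a}Z^{-i}H_{v}=P(\X^{\b}Z^{c})$ for a one-variable polynomial $P$, factors $P$ over the algebraically closed field $\field$, and multiplies back by $\X^{\a}Z^{i}$. Your $n_{t+2}=\min\{k\in\IZ_{+}:kv\in L\}$ is exactly the paper's $c$ (the set $\{k:kv\in L\}$ is a subgroup of $\IZ$, and $(n-i)v=\a\in L$ forces $n_{t+2}\mid(n-i)$, so the largest admissible $\mu$ gives $c=(n-i)/\mu=n_{t+2}$); your $\Alpha$ is $-\b$; and your homogeneous substitution $(U,V)=(Z^{n_{t+2}},\X^{\Alpha})$ is the same reduction to a one-variable factorization.

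One caution: your stated source for the divisibility $n_{t+2}\mid n$, namely that $(\0,n)$ sits at the top of the face, is no constraint at all --- it holds for every monic $h$. The paper is equally silent on this point: after factoring $P$ it multiplies back by $\X^{\a}Z^{i}$ without explaining how the residual factor $Z^{i}$ is absorbed into a product of the form $\prod(Z^{c}-\rho\,\X^{-\b})^{e}$ when $c\nmid i$. For a general monic $h$ as in the lemma's hypotheses this divisibility can indeed fail (take $h=z^{3}+xz$ over $L=\IZ$, where $v=\tfrac{1}{2}$, $n_{t+2}=2$, and $H_{v}=Z(Z^{2}+X)$). So this is a loose end shared by both arguments rather than a defect particular to your plan.
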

 
\begin{proof}
	Let $(0,\ldots,0,n)$ and $(\a,i)$ be the extremities of the face (actually the segment) of the Newton polyhedron that correspond to $v_{t+2}$. 
	Let 
	$$
		(\b,c):=\frac{1}{\mu}(-\a,n-i) \in L 
	$$
	where $ \mu $ is the largest positive integer such that $\frac{1}{\mu}(-\a,n-i) \in L $. 
	Then 
	$$
		\X^{-\a}Z^{-i} H_{v_{t+2}}(\X, Z)
		=
		P(\X^\b Z^c),
	$$ 
	for a polynomial $P\in \field [t].$ Factorizing $P$ into a product of linear polynomials ($\field$ is algebraically closed) and 
	multiplying the result by $\X^{\a}Z^{i}$ provides the desired form. %
\end{proof}

Coming back to $ f \in \field [[\x]][z] $,
we will also repeatedly pass to a higher dimensional ambient space.
With the above notations, we set 
$ R_0 := \field[[\x]][u_0] $, for some $ u_0 := z + h_0(\x) $,
and, for $ t \geq 0 $, 
we define 
$$
	R_{t+1} := R_{t} [z_{t+1}]/ \langle z_{t+1} - (u_t^{n_{t+1}} - q_{t+1} ( \x, \u_{\leq t} ) ) \rangle,
$$
for some $ q_{t+1} \in \field[[\x]][\u_{\leq t}] $ with $ \deg_{u_{t}} (\, q_{t+1} \,) < n_{t+1} $,
	and $ u_{t+1} := z_{t+1} + h_{t+1} (\x, \u_{\leq t}) $ is some translation of $ z_{t+1} $.
Here and in the following, we frequently abbreviate:
\begin{equation}
	\label{eq:abbreviate_G_t+1}
	\left\{
	\begin{array}{ccl}
		\u_{\leq t}  & := & ( \, u_0, \,u_1, \, \ldots, \, u_{ t } \, ) 
		\\[5pt]
		\u_{< t}  & := & \u_{\leq t-1}
		\\[5pt]
		\x^{\a} \u_{< t}^{ \b } & := & 
		\x^{\a}\, u_0^{\b_{1} } \, u_1^{\b_{2} } \cdots u_{t - 1}^{\b_{t} } ,
	\end{array}
	\right.
\end{equation}
for $ \a \in \IZ^d_\gqz $ and $ \b \in \IZ^t_\gqz $.
Note that $ u_0 = u $, $ z_1 = z $, and $ R_1 = R $.

Similarly as we did for $ R_1 $, we can identify $ R_{t+1} $ with
$$
	\field [[\x]][u_0]_{< n_1}[u_1]_{< n_2} \cdots [u_{t}]_{< n_{t+1} } [ {z_{t+1}} ]
$$
which is defined analogously to \eqref{eq:restricted_powers}.
Furthermore, we equip $ \field[[\x]] $ with the linear map $ \weight_0 : \IZ^d_\gqz \to\IQ^d_\gqz $ and we extend it to a linear map on $ \field[[\x]][\u_{\leq t}] $ 
using $	v_1, \ldots, v_{t+1} $,
\begin{equation}
\label{eq:extension_of_weight}
\left\{
\begin{array}{ccll}
&&\weight_{t+1} : \IZ^{d + t {+1}}_\gqz \to \IQ^d_\gqz \\[3pt]
\weight_{t+1} ( x_i ) &:= &\weight_0 ( x_i ) \,, & \mbox{ for all } 1 \leq i \leq d \, ,
\\[3pt]
\weight_{t+1} ( u_{j }) &:= &v_{j+1} \in \IQ^d_\gqz \, , & \mbox{ for all } 0 \leq j \leq {t}  \, .
\end{array} 
\right.		
\end{equation}

We denote by $\widetilde{f^{(t+1)}} $ the unique representative of the image of $ f $ in $ R_{t+1} $ which is given by an element in 
$ \field [[\x]][u_0]_{< n_1}[u_1]_{< n_2} \cdots [u_{t}]_{< n_{t+1} } [ {z_{t+1}} ] $.

\begin{Prop}
		\label{Prop:ProductofBinomials}
		Let the notations be as before.
		Suppose that the associated polyhedron 
		$ \nupoly{ \weight_{t+1} }{ \widetilde{f^{(t+1)}} }{ \x, \u_{\leq t} }{ z_{t+1} } $
		is minimal \wrt the choice of $ z_{t+1} $ and that it has a unique vertex, say
		$$ 
		v_{t+2} 
		\in \nupoly{ \weight_{t+1} }{ \widetilde{f^{(t+1)}} }{ \x, \u_{\leq t} }{ z_{t+1} }.
		$$ 
		There exists a canonical representative $f^{(t+1)}$ of $\widetilde{f^{(t+1)}}$
		(i.e., of $ f $)		
		in $ R_{t+1} $
		such that the initial form of $f^{(t+1)}$ at the vertex 
		$ v_{t+2} $ \wrt $ \weight_{t+1} $ 
		is a product of powers of binomials, i.e.,
		\begin{equation}
		\label{POB}		
		F^{(t+1)}_{v_{t+2},\weight_{t+1}} =
		in_{v_{t+2},\weight_{t+1}} ( f^{(t+1)}) =
		 \prod_{i=1}^{d_{t+2}} (Z_{t+1} ^{n_{t+2}} - \coeffforf_{t+1,i} \, \X^{\a} \U^\b_{< t + 1 } )^{ e_{t+2,i}},
		\end{equation}
		where $ \a \in \IZ^d_\gqz $, 
		$ \b \in \IZ^{t+1}_\gqz $ with 
		$ \frac{\weight_{t+1}(\a, \b)}{n_{t+2}} = v_{t+2} $,
		and $ \gcd(n_{t+2},\a,\b) = 1 $, 
		and $ n_{t+2} $, $ d_{t+2} $, $ e_{t+2,i} \in \IZ_+ $, 
		and $ \coeffforf_{t+1,i} \in \field $, pairwise different. 
\end{Prop}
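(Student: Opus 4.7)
The plan is to reduce the statement to Lemma \ref{Lattice} via an auxiliary fractional monoid ring that linearizes the weight. Set
$L := \IZ^d + \IZ v(1) + \cdots + \IZ v(t+1) \subset \IQ^d$
and consider the ring $A := \field[[\X^{L_\gqz}]]$. Introduce the $\field[[\x]]$-algebra homomorphism
$$
  \phi: \field[[\x]][\u_{\leq t}][z_{t+1}] \longrightarrow A[z_{t+1}],
  \qquad u_j \longmapsto \X^{v(j+1)}, \quad z_{t+1} \longmapsto z_{t+1}.
$$
By construction $\phi$ carries monomials of $\weight_{t+1}$-weight $w$ to monomials of $\weight_0$-weight $w$ in $A$, so the polyhedron of $\phi(\widetilde{f^{(t+1)}})$ in $A[z_{t+1}]$ coincides with $\nupoly{\weight_{t+1}}{\widetilde{f^{(t+1)}}}{\x, \u_{\leq t}}{z_{t+1}}$ and retains the unique vertex $v(t+2)$. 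Since $\phi(\widetilde{f^{(t+1)}})$ is monic in $z_{t+1}$ with coefficients in $A$, Lemma \ref{Lattice} applies and yields
$$
  \phi(\widetilde{F^{(t+1)}}_{v(t+2),\weight_{t+1}}) = \prod_{i=1}^{d_{t+2}} \bigl( Z_{t+1}^{n_{t+2}} - \rho_{t+2,i} \X^\Alpha \bigr)^{e_{t+2,i}}
$$
for some $\Alpha \in L_\gqz$ with $(\Alpha, n_{t+2})$ primitive in $L \oplus \IZ$, as exhibited by the proof of the lemma.

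It remains to lift this factorization back to $\field[\X, \U_{\leq t}, Z_{t+1}]$. The hypotheses
$v(j+1) \not\in \IZ^d + \IZ v(1) + \cdots + \IZ v(j)$ and $v(j+1) >_\indexorder n_j v(j)$
imply inductively that $[L_j : L_{j-1}] = n_j$, where $L_j := \IZ^d + \IZ v(1) + \cdots + \IZ v(j)$, so the restricted-powers monomials $\u^\b$ with $0 \leq \b_k < n_{k+1}$ form a complete set of coset representatives for $L/\IZ^d$. Consequently the restriction of $\phi$ to the restricted-powers subring $\field[[\x]][u_0]_{<n_1}\cdots[u_t]_{<n_{t+1}}$ is an isomorphism of $\field$-algebras onto $A$, and $\X^\Alpha$ admits a unique lift to a restricted monomial $\x^{\a} \u_{\leq t}^{\b}$ with $\weight_{t+1}(\a,\b) = n_{t+2} v(t+2)$. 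The primitivity of $(\Alpha, n_{t+2})$ transfers to $\gcd(n_{t+2}, \a, \b) = 1$: any common divisor $k > 1$ would produce a restricted lift of $\X^{\Alpha/k}$ via $(\a/k, \b/k)$, contradicting primitivity of $(\Alpha, n_{t+2})$ in $L \oplus \IZ$.

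With this lift in hand, set
$$
  F^{(t+1)}_{v(t+2),\weight_{t+1}} := \prod_{i=1}^{d_{t+2}} \bigl( Z_{t+1}^{n_{t+2}} - \rho_{t+2,i} \x^{\a} \u_{\leq t}^{\b} \bigr)^{e_{t+2,i}},
$$
and define the canonical representative $f^{(t+1)}$ of $\widetilde{f^{(t+1)}}$ in $R_{t+1}$ by prescribing it to have this product as its initial part at $v(t+2)$ and to be of restricted-powers form in the strictly higher-weight tail. This is consistent because the reduction of $F^{(t+1)}_{v(t+2),\weight_{t+1}}$ to restricted-powers form in $R_{t+1}$, via $u_j^{n_{j+1}} \equiv z_{j+1} + q_{j+1}$, recovers $\widetilde{F^{(t+1)}}_{v(t+2)}$ (by injectivity of $\phi$ on restricted powers at each fixed weight) plus strictly higher $\weight_{t+1}$-weight terms, since $\weight_{t+1}(z_{j+1}) = v(j+2) >_\indexorder n_{j+1} v(j+1) = \weight_{t+1}(u_j^{n_{j+1}})$ by assumption. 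Uniqueness of $f^{(t+1)}$ follows from the uniqueness of restricted-powers representatives in the tail.

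I expect the main obstacle to lie in the lifting step, namely the establishment of the $\field$-algebra isomorphism between the restricted-powers subring and $A$. This hinges on the inductive index computation $[L_j : L_{j-1}] = n_j$, which must be extracted from the two primitivity hypotheses on the sequence $(v(j))_j$; once this combinatorial fact is secured, the factorization, the $\gcd$ statement, and the canonicality of $f^{(t+1)}$ follow by straightforward bookkeeping.
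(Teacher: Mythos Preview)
Your proposal is correct and follows essentially the same route as the paper: substitute $u_j \mapsto \X^{v(j+1)}$ to land in $\field[[\X^{L_{\geq 0}}]][z_{t+1}]$, invoke Lemma~\ref{Lattice} for the binomial factorization, then lift the monomial $\X^{\Alpha}$ back uniquely to a restricted-powers monomial and adjust the tail. Your write-up is in fact more explicit than the paper's on two points it leaves implicit---the bijection between restricted-powers monomials and $L_{\geq 0}$ (which the paper asserts ``by the definition of $T$''), and the transfer of primitivity to $\gcd(n_{t+2},\a,\b)=1$---so your identification of the index computation as the only genuine content is accurate.
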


\begin{proof}
	Let $ h  \in \field [[\x^{L_\gqz }]][z_{t+1}] $ be the polynomial obtained
	from $\widetilde{f^{(t+1)}}$ by changing the variables 
	$ u_i $ to $ \x^{v_{i+1}} $. 
	Then $ h $ satisfies the hypothesis of Lemma \ref{Lattice} and we obtain a factorization of $ H_{v_{t+2} }  = in_{v_{t+2}, W_0 } (h) $, i.e., using the notations of the cited lemma,
	$$
			H_{v_{t+2}} = 
			\prod_{i=1}^{d_{t+2}} (Z_{t+1}^{n_{t+2}} - \coeffforf_{t+1,i} \, \X^{\Alpha} )^{ e_{t+2,i}}.
	$$	
	
	For simplicity, we set
	$ 
		T := \field [[\x]][u_0]_{< n_1}[u_1]_{< n_2} \cdots [u_{t}]_{< n_{t+1} }.
	$
	By the definition of $ T $,
	there exists for every element 
	$
		\Alpha\in L_\gqz \cap \big( \IZ^d_\gqz + \IZ_\gqz v_1 + \ldots + \IZ_\gqz v_{t+1}\big) 
	$
	in the non-negative part of $L $, 
	a unique monomial $ M_\Alpha := M_\Alpha (\x, \u_{\leq t} ) $ in $ T $
	having the weight $\Alpha $, 
	$ \weight_{t+1} (M_\Alpha) = \Alpha $.
	This allows us to associate with $  H_{v_{t+2}} $ the following unique product of binomials in $ T $,
	$$ 
		f^* := 	
		\prod_{i=1}^{d_{t+2}} (z_{t+1}^{n_{t+2}} - \coeffforf_{t+1,i} \, M_{\Alpha} )^{ e_{t+2,i}} 
		 = \prod_{i=1}^{d_{t+2}} (z_{t+1} ^{n_{t+2}} - \coeffforf_{t+1,i} \, \x^{\a} \u^\b_{< t + 1 } )^{ e_{t+2,i}}
		\in T, 
	$$ 
	where $ \a $, $ \b $, $ n_{t+2} $, $ d_{t+2} $, $ e_{t+2,i} $,
	and $ \coeffforf_{t+1,i} $ are as in the statement of the proposition.

	We define $f^{(t+1)} \in R_{t+1}$ as follows: 
	first, we consider
	$$
		f^* + \widetilde{f^{(t+1)}}- f^*
	$$
	and then we obtain $f^{(t+1)}$ by replacing 	$\widetilde{f^{(t+1)}}-f^* $ with its unique representative 
	in $ T .$
	By construction, $f^{(t+1)}$ is a representative of $ f $ in $ R_{t+1} $ and its initial form at the unique vertex $ v_{t+2} $
	\wrt $ W_{t+1} $ is as desired.
\end{proof}

\begin{Not_num}
	\label{Not:star}
	In the situation of the previous proposition, 
	we say that {\em condition $ ( \bigstar ) $ holds} if the following are true:
	\begin{enumerate}
		\item[$(\star 0)$] $d_{t+2} = 1$,
		\item[$(\star 1)$] $v_{t+2}>_{poly} n_{t+1}v_{t+1}$, and
		\item[$(\star 2)$] $v_{t+2}\not\in \IZ^d+\IZ v_1+\cdots+\IZ v_{t+1}$.
	\end{enumerate}
\end{Not_num}

Note that conditions $ (\star 1) $ and $ (\star 2) $ guarantee that the semigroup generated by the vectors $ v_{1}, \ldots, v_{t+2} $ and the canonical vectors of $ \IZ^d $, is isomorphic to the semigroup of a quasi-ordinary hypersurface (see \cite{GP2}).

\smallskip

Now, we can give the construction of the invariant 
$$ 
	\theinvariant = ( \, \theinvariant_1 \, ; \, \ldots \, ; \, \theinvariant_g \, ; \,  \theinvariant_{g+1} \, ) , 
	\hspace{15pt}
	 g \in \IZ_\gqz ,
$$ 
which we use to characterize quasi-ordinary singularities.

\medskip

\begin{Constr}
\label{The_Construction}
	Let $ f = \sum\limits_{\a, b} \coeffforf_{\a, b } \, \x^\a \, z^b  \in \field [[ \x ]] [ z ] $ 
	be an irreducible polynomial of degree $ n $. 
	First, we consider the polyhedron of $ f $ \wrt $ \weight = \weight_0 $ on $ \field [[ \x ]] $.
	(Recall: $ \weight_0 ( \a ) = \a $).
	Set 
	$$
		\nupoly{0}{f}{\x}{z} := \nupoly{\weight_0}{f}{\x}{z} 
	$$
	As described in the previous section we minimize $ \nupoly{0}{f}{\x}{z} $ \wrt the choice of $ ( \x, z ) $ respecting the inclusion $ \field[[ \x ]] \hookrightarrow \field [[\x ]][z] / \langle f \rangle $.
	Hence we assume that {$ ( \x, u_0 ) $, $ u_0 := z + h_0 (\x) $}, are such that $ \nupoly{0}{f}{\x}{{u_0}} $ is minimal.
	
	We define the first entry of $ \theinvariant = \theinvariant(f;\x;z) $ by
	$$
		\theinvariant_1 := 
			\left\{
					\begin{array}{ll}
						 v_1  	\, , & \mbox{ if $ \nupoly{0}{f}{\x}{{u_0}} $ has exactly one vertex $ v_1 $,}
						\\[3pt]
						 \GOOD 	\, ,	 &	\mbox{ if  $ \nupoly{0}{f}{\x}{{u_0}} = \varnothing $ is empty,}
						\\[3pt]
						 \BAD	\, ,	&	\mbox{ else.}
					\end{array}
			\right.
	$$
	If we are not in the first case then the construction ends and $ \theinvariant := (\, \theinvariant_1 \,) $.
	
	Suppose $ \nupoly{0}{f}{\x}{{u_0}} $ has exactly one vertex $ v_1 $.
	Then we can write the initial form of $ f $ at the vertex $ v_1 $ (Definition \ref{Def:inti_v_W}) as
	$$
		F_{v_1, \weight_0 } = ( {U_0^{ n_1 }} - \coeffforf_0 \X^{\a_1} )^{e_1 } ,
	$$
	for certain $ n_1, e_1 \in \IZ_+ $, $ n = n_1 e_1 $, $  \coeffforoverweight_{0} \in \field^\times $ and $ {\a_1} \in \IZ^d_\gqz $ the unique solution for $ \weight_0 (a_1) = n_1 \cdot v_1 $.
	{\em Note that we have $ n_1 > 1 $ (and thus $ e_1 < n $) because otherwise we can eliminate the vertex $ v_1 $}.

	We consider
	$$ 
		{
		R_1 := \field[[ \x ]][u_0, z_1] / \langle \, z_1 - ( u_0^{n_1} - \coeffforoverweight_{0} \x^{\a_1}) \ \rangle 
		}
	$$
	In there, we have
	$
		z_1  = u_0^{ n_1 } - \coeffforoverweight_{0} \x^{\a_1}.
	$
	Let $ \widetilde{f^{(1)}} \in \field[[ \x ]][u_0]_{< n_1 }[z_1] $ be the unique representative of the class of $ f $ in $ R_1 $, as in (\ref{eq:restricted_powers}).
	Since $ f $ is a polynomial of degree $ n $ in $ {u_0} $ we obtain that $ { \widetilde{f^{(1)}} } $ is a polynomial of degree $ e_1 = \frac{n}{n_1} {<n}$ in $ {z_1} $.
	As in \eqref{eq:extension_of_weight}, we extend $ \weight_0 $ from $ \field[[\x]] $ to a linear map 
	$ \weight_1 : \IZ^{ d + 1 }_\gqz \to \IQ^d_\gqz $ on 
	$ \field[[\x]][{u_0}] $ by assigning the value $ v_1 \in \IQ^d_\gqz $ to $ u_0 $.
	This finishes the first cycle of the construction and
	we call $ {\widetilde{f^{(1)}}} $ the transform of $ f $ under the first cycle.

	Suppose we have finished $ {t+1} $ cycles, $ { t\geq 0} $, and the construction did not end so far. 
	For notational convenience we put
	$$	
		f^{(0)} := f \in \field [[ \x ]] [u_0]=: R_0
	\hspace{20pt}
		\mbox{ and }
	\hspace{20pt}
		e_0 := n.
	$$
	Then the first $ t {+1} $ entries of $ \theinvariant $ are given by $ ( \, v_1; \, v_2; \, \ldots; \, v_{t{+1}} \, ) $ and
	the given data is 
	$$
	\widetilde{f^{(t+1)}} \in \field [[\x]][u_0]_{< n_1}[u_1]_{< n_2} \cdots [u_{t}]_{< n_{t+1} } [ z_{t+1} ],
	$$
	which is of degree $ e_{t+1} $ in $ {z_{t+1}} $, and $ e_{t+1} < e_{t} < \ldots < e_1 < e_0 $.
	{Note: $ \widetilde{f^{(t+1)}} $ is the unique representative of $ f $ in the ring
	$$
		R_{t+1} := R_{t}[z_{t+1}] / \langle \, z_{t+1} - ( u_{t}^{n_{t+1}} - \coeffforoverweight_{t} \x^{\a} \u_{<t}^{\b}) \, \rangle.
	$$
	(Recall the abbreviations \eqref{eq:abbreviate_G_t+1}).
	Clearly, the following equality holds in $ R_{t+1} $:
	\begin{equation}
	\label{eq:u_t+1_formula}
			z_{t + 1 }  = u_{t}^{n_{t+1}} - \coeffforf_{t} \, \x^{\a} \u^\b_{< t}.
	\end{equation}
	By the construction, we also have:
	\begin{equation}
	\label{eq:quasi_hom_condi}
		\frac{\weight_{t}( \, \a,\, \b \,) }{n_{t+1}} = v_{t+1} 
	\end{equation}
	}
	Further, $ W_0 $ got extended to a linear map 
	$ \weight_{t+1} : \IZ^{d + t +1}_\gqz \to \IQ^d_\gqz  $
	on $ \field[[\x]][\u_{\leq t}] $ by defining
	$ \weight_{t+1} ( u_{j} ) :=  v_{j+1} $, for $ 0 \leq j \leq t $,  as in \eqref{eq:extension_of_weight}.

	We consider the polyhedron associated to the given data
	and minimize it by changes in $ z_{t+1} $
	(e.g., via the Tschirnhaus transformation \eqref{eq:Tschirnhaus}),
	\begin{equation}
	\label{eq:minimizing}
		z_{t+1} \mapsto u_{t+1} := z_{t+1} + h_{t+1} ( \x, \u_{ \leq t } ),
	\end{equation}	 
	{and by changes in those $ x_i $ which did not appear in the monomials defining the re-embedding \eqref{eq:u_t+1_formula}
		(see Example \ref{Ex:Change_x_i_not_yet} below).}
	Suppose %
	$$
		\Delta_{t+1} := \nupoly{ \weight_{t+1} }{ \widetilde{f^{(t+1)}} }{ \x, \u_{\leq t} }{ u_{t+1} } \subset \IR^d_\gqz
	$$ 
	is minimal.
	We define
	$$
		\theinvariant_{t + {2}} := 
		\left\{
		\begin{array}{ll}
		v_{t+2} \, ,	 &	\mbox{ if  $ \Delta_{t+1} $ has exactly one vertex $v_{t+2} $ and $ (\bigstar) $ holds},
		\\[3pt]
		\GOOD 	\, ,	 &	\mbox{ if  $ \Delta_{t+1} = \varnothing $ is empty,}
		\\[3pt]
		\BAD 	\, ,	&	\mbox{ else.}
		\end{array}
		\right.
	$$ 
	In the first case, we denote by $ f^{(t+1)} $ the canonical representative of the class of $ f $ in $ R_{t+1} $, as in Proposition \ref{Prop:ProductofBinomials}.
	In particular, its initial form at  $ v_{t+2} $ (\wrt $ W_{t+1} $) is a product of powers of binomials \eqref{POB},
	and $ (\star 0) $ implies
	$$
		F^{(t+1)}_{v_{t+2},\weight_{t+1}} 
		= in_{v_{t+2}, \weight_{t+1} } ( f^{(t+1)} ) 
		= (U_{t+1}^{n_{t+2}} - \coeffforf_{t+1} \, \X^{\a} \U^\b_{< t+1} )^{ e_{t+2}}
		,
	$$
	where $ \frac{\weight_{t+1}(\a_{t+2}, \b_{t+2})}{n_{t+2}} = v_{t+2} $, $ \gcd(n_{t+2},\a_{t+2},\b_{t+2}) = 1 $, $ d_{t+2}, e_{t+2,i} \in \IZ_+ $.

	Note that $ n_{t+2} > 1 $ and thus $ e_{t+2} < e_{ t+1 } $.
	We define the ring
	$$
		R_{t+2} := R_{t+1}[z_{t+2}] / \langle \, z_{t+2} - ( u_{t+1}^{n_{t+2}} - \coeffforf_{t+1} \, \x^{\a} \u^\b_{< t+1} ) \, \rangle  .
	$$
	Finally, we denote by $ \widetilde{f^{(t+2)}} $ 
	the unique representative for the class of $ f^{ (t+1) } $ in $ R_{t+2} $ (which is also the class of $ f $) lying in
	$ 
	 	\field [[\x]][u_0]_{< n_1}[u_1]_{< n_2} \cdots [u_{ t + 1 } ]_{ < n_{ t + 2} }[z_{t+2}]
	$ 
	 and we extend $ \weight_{t+1} $ to $ \weight_{ t+ 2 } $ by putting $ \weight_{t+2} ( u_{ t+1 }) := v_{t +2} $.
	
	Since $ n = e_0 > e_1 > \ldots > e_t \geq 1 $, the construction ends after finitely many cycles, say after $ g \geq 1 $ cycles, and we obtain
	$$
		\theinvariant := \theinvariant(f;\x;z) := ( \, v_1; \, v_2; \, \ldots; \, v_g; \, \lastentry \,),
		\ \ \ \ \ \mbox{ with } \lastentry \in \{ \GOOD, \BAD \}.
	$$		
	\rightline{\it (End of Construction \ref{The_Construction}).}
\end{Constr}

\smallskip 

{Let us point out that, that there are two different minimizing processes for the polyhedron.
	First, the Tschirnhaus transformation in $ z $ and second, changes in the $ x_i $ variables that preserve the shape of the preceding polyhedra.
}
{After the first cycle, we perform only changes in those elements of $ ( \x ) $ that did not yet appear in the binomials \eqref{eq:u_t+1_formula}.
	The reason for this is that we do not want to spoil the polyhedra $ \Delta_j $, $ j \leq t $, that we considered in the preceding cycles. 
	It is essential that they remain with exactly one vertex during the procedure in order to obtain an interpretation of the given hypersurface as an overweight deformation of a binomial variety.
	In section 4, we deduce formulas for the roots of $ f $ from the latter which is one of the key ingredients for the proof of our characterization theorem.
	}
	
\begin{Ex}
	\label{Ex:Change_x_i_not_yet}
	{
			Consider $ f = (z^2 - x_1^3 )^2  - x_1^5 z (x_2^2 - x_3^2) \in \field[[x_1,x_2,x_3]][z] $.
			The projected polyhedron has precisely one vertex $ \gamma_1 := (\frac{3}{2},0,0) $ and we get 
			$$
				u_1 := z^2 - x_1^3
			$$
			and $\widetilde{ f^{(1)} } =  u_1^2 - x_1^5 z (x_2^2 - x_3^2) $. 
			At the first look, the weighted polyhedron has two vertices (using $ W(z) = \gamma_1 $).
			But, we are still allowed to make changes in $ x_2 $ and $ x_3 $, and for $ \widetilde{x}_2 := x_2 - x_3 $ and $ \widetilde{x}_3 := x_2 + x_3 $, we get
			$$
				f^{(1)} =  u_1^2 - x_1^5 \widetilde x_2 \widetilde x_3 z. 
			$$
			Hence, we can re-embed again and achieve $ \xi = \infty $
			which implies that $ f $ is a \qo polynomial, as we will prove later. 
	}
\end{Ex}


\begin{Rk}
\begin{enumerate}
\item
Roughly speaking, the characterization theorem below will state that if $ \theinvariant $ ends with $ \GOOD $, then the given $ f $ is a \qo polynomial.

On the other hand, if the last entry of $ \theinvariant $ is $ \BAD $ then $ f $ is not a \qo polynomial.
But still $ f $ might be \qo \wrt another choice of the projection, see Example \ref{Ex:different_projection}.

\smallskip

\item
In each cycle of the construction we embed the singularity into an ambient space with one dimension more.
From this we then obtain refined information on the hypersurface.
This is highly motivated by Teissier's approach to local uniformization for Abhyankar valuations, see \cite{T}.
His idea is to embed a given singularity into a suitable higher dimensional ambient space such that it becomes an overweight deformation of a toric variety.
Then a local resolution of the original singularity can be obtained from that of the associated toric variety.

\smallskip

\item
This construction is slightly connected to the invariant by Bierstone and Milman for constructive resolution of singularities in characteristic zero, see \cite{BMheavy} or \cite{BMlight}.
In \cite{BerndBM} the second author connected their invariant with certain polyhedra.
More precisely, 
$ \delta ( f ) := | v_1 | = v_{1,1} + \ldots + v_{1,d} $ is the resolution invariant for $ f $.
Therefore, $ v_1 \in \IQ^d_\gqz $ is a refined information of $ \delta (f ) \in \IQ_\gqz $
and $ v_{t + 1}  $ can be interpret as a generalization of $ \delta ( f^{ (t) } ) $ to a situation with weighted coordinates.

\smallskip

\item 
The assumption of $ f $ being irreducible implicitly appears when we state that the initial form at the vertex $ v_1 $ is a a power of a single binomial.
If $ f $ is not irreducible, this is not necessarily the case (e.g., see \cite{GR_Bernd}).
Nonetheless, it is possible to generalize the construction to arbitrary polynomials, but this requires more technical efforts that are not needed for the present result. 
This will be the subject of a future work. 

\smallskip 

\item
	In \cite{As}, Assi proves an irreducibility criterion for \qo polynomials.
	Different criteria of such type have been obtained by \cite{Evelia_Gwo_irred} or \cite{GV}.
	One may wonder if is possible to modify our construction in such a way that we can additionally detect if the given polynomial is irreducible.
	Since Construction \ref{The_Construction} is not fixed to \qo polynomials, we would obtain an irreducibility criterion for a larger class of polynomials (see also \cite{GR_Bernd}). 
	Hence this problem requires a more subtle study and will be the subject of a future work.
	The restriction of such a criterion to quasi-ordinary polynomials will give exactly Assi's criterion.
\end{enumerate}
\end{Rk}

	Let us briefly recall the notion of approximated roots, cf.~\cite{As} section 2, \cite{Abhy_semigp_mero_fct}, \cite{Patrick_approx_roots}.
	Given a polynomial $ f  = z^n + f_1 z^{n-1} + \ldots + f_n \in K[[\x]][z] $ and a monic polynomial $ g \in K[[\x]][z] $ of degree $ m $ such that $ n = dm $ for $ d \in \IZ_+ $.
	Then $ f $ can be uniquely written as
	$$
		f = g^d + a_1 g^{d-1} + \ldots + a_d \in K[[\x]][z]_{<m}[g]
		\ \ \ \ \ \ 
		(\mbox{cf.~\eqref{eq:restricted_powers}}).
	$$
	Recall that the Tschirnhaus transform of $ f $ (\wrt $ g $) is defined replacing $ g $ by $ \tau_f(g) := g + \frac{a_1}{d} $, 
	see \eqref{eq:Tschirnhaus}.
	(Note that $ d $ is invertible since $ \mbox{char}(K)= 0 $).
	The polynomial $ g $ is called a {\em $ d $-th approximate root of $ f $} if $ \deg_y (f-g^d) < n - m $.
	Note that the last condition is equivalent to $ \tau_f(g) = g $.

	The following draws a connection to the elements $ ( z, u_1, \ldots, u_g ) $ that we obtain from our construction.

\begin{Rk}[cf.~\cite{T}, Proof of Proposition 8.15, p.~546]
	\label{Rk:Approx_Roots}
	By construction, we have
	$$
		u_1 = z^{n_1} - \rho_1 \x^\a  + \sum \,
		\coeffforoverweightplus_{\Alpha, \beta_0} \, \x^{\Alpha} \, z^{\beta_0} ,
	$$
	where $ W_g(z^{n_1}) = W_g(\x^\a) <_\indexorder W_g (\x^{\Alpha} \, z^{\beta_0} ) $ if $ \coeffforoverweightplus_{\Alpha, \beta_0} \neq 0 $.
	Note that $ \beta_0 \leq n_1 - 1 $.
	
	If $ \coeffforoverweightplus_{\Alpha, n_1 - 1} = 0 $, for all $ \Alpha $, 
	then $ z $ is an $ n_1 $-th approximated root of $ u_1 $.
	For example, this is true if we minimize the polyhedra using the Tschirnhaus transform.
	
	Suppose $ \coeffforoverweightplus_{\Alpha, n_1 - 1} \neq 0 $ for some $ \Alpha $.
	We apply the Tschirnhaus transformation to eliminate all terms with $ \beta_0 = n_1 - 1 $,
	$$
		z \mapsto z_* := z + \frac{1}{n_1} 
		\sum \,
		\coeffforoverweightplus_{\Alpha, n_1 - 1} \, \x^{\Alpha} .
	$$
	We obtain $ u_1 = z_*^{n_1} - \rho_1 \x^\a  + \sum \,
	\widetilde{\coeffforoverweightplus}_{\Alpha, \beta_0} \, \x^{\Alpha} \, z_*^{\beta_0} $,
	for certain $ \widetilde{\coeffforoverweightplus}_{\Alpha, \beta_0} \in  \field $ that can only be non-zero if $ \beta_0 < n_1 $.
	This implies that $ z_* $ is a $ n_1 $-th approximated root of $ u_1 $.
	From the weight conditions, one can deduce that, after translation, we have
	$$ 
		\begin{array}{ccc}
			W_g(z_*^{n_1}) = W_g(\x^\a) <_\indexorder W_g (\x^{\Alpha} \, z_*^{\beta_0} ), \ \  \mbox{ if } \widetilde{\coeffforoverweightplus}_{\Alpha, \beta_0} \neq 0,
			\\[3pt]
			\Delta (f; \x; \z_*) = \Delta (f; \x; \z),
			\\[3pt]
			\Delta(f^{(1)}; \x, z_*; u_1 ) = \Delta(f^{(1)}; \x, z; u_1 ).
		\end{array}
	$$
	In particular, the two polyhedra remain minimal. 
	 
	 By repeating these arguments, we obtain a translation $ u_{1,*} $ of $ u_1 $ which is a $ n_2 $-th approximated root of $ u_2 $. Hence, $ z_* $ is also a $ (n_1 n_2) $-th approximated root of $ u_2 $.
	 Furthermore, all weight conditions remain true and the polyhedra do not change.
	 
	By continuing, we get elements $ u_{0,*}:= z_*, u_{1,*}, \ldots, u_{g-1,*} $ with the same properties as $ z, u_1, \ldots, u_{g-1} $ (weight conditions and minimal polyhedra)
	and additionally, we have that $ u_{t,*} $ is a $ (n_{t+1} \cdots n_g) $-th approximated root of $ f = u_g $, for $ 0 \leq t \leq g-1 $.
	
	If we use the Tschirnhaus transformation in the minimizing process during the construction, we have $ u_{t,*} = u_t $, for all $ 0 \leq t \leq g - 1 $ (with $u_0:=z$). 
\end{Rk}

	The previous observations indicate that the approximate roots of the polynomial $ f $ can play a determinant role in the construction of the invariant. 
	This may provide a variant adapted to a computational approach.
	Nonetheless this does not overcome the minimizing process of the polyhedron, since Tschirnhaus transformations (see \eqref{eq:Tschirnhaus}) also appear in the context of approximated roots.
	Finally, the presented construction provides a close connection to invariants for desingularization in the line of the second author's  work \cite{BerndBM} on the invariant for resolution of singularities in characteristic zero by Bierstone and Milman \cite{BMheavy}.

\medskip

Recall Definition \ref{Def:nu_qo_W}.
The following is an easy consequence of Construction \ref{The_Construction}: 

\begin{Lem}
\label{Lem:Cycles_works_nu_qo}
	Let $ f $ and $ ( \x, z) $ be as in Construction \ref{The_Construction}.
	We have
	$$ 
		\theinvariant(f;\x;z) = ( \, v_1; \, v_2; \, \ldots; \, v_g; \, \GOOD \,) 
	$$	
	if and only if the successive transforms $ {\widetilde{f^{(t)}}} \in \field [[\x]][u_0]_{< n_1}[u_1]_{< n_2} \cdots [u_{t-1}]_{< n_t } [ {z_t} ] $ of $ f $ are $ \nu $-\qo \wrt $ \weight_t $, for all $ t \in \{ 0, \ldots, g \} $.
	In particular, they are $ \nu $-\qo \wrt $ \weight_\ast := \weight_g $.
\end{Lem}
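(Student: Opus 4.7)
The plan is to prove the equivalence by directly unpacking Construction \ref{The_Construction}, translating between the conditions placed on the polyhedra
$$ \Delta_t := \nupoly{\weight_t}{\widetilde{f^{(t)}}}{\x, \u_{<t}}{u_t} $$
and the $\nu$-\qo property of Definition \ref{Def:nu_qo_W}.

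For the direction ($\Rightarrow$), I would assume $\theinvariant(f;\x;z) = ( v(1); \ldots; v(g); \GOOD )$. By the construction, $\theinvariant_{t+1} = v(t+1) \in \IQ^d_\gqz$ for $t = 0, \ldots, g-1$ means that the minimal polyhedron $\Delta_t$ has exactly one vertex (and, for $t \geq 1$, that condition $(\bigstar)$ of Notation \ref{Not:star} is satisfied). By Definition \ref{Def:nu_qo_W}, this already implies that $\widetilde{f^{(t)}}$ is $\nu$-\qo \wrt $\weight_t$ for every such $t$. Moreover, $\theinvariant_{g+1} = \GOOD$ means that $\Delta_g$ is empty, so $\widetilde{f^{(g)}}$ is also $\nu$-\qo \wrt $\weight_g$ via the empty-polyhedron clause of Definition \ref{Def:nu_qo_W}.

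For the direction ($\Leftarrow$), I would assume that each transform $\widetilde{f^{(t)}}$ is well-defined and $\nu$-\qo \wrt $\weight_t$ for every $t \in \{0, \ldots, g\}$. The very existence of $\widetilde{f^{(t)}}$ forces Construction \ref{The_Construction} to have advanced through cycle $t$ without aborting; hence at each such step $\Delta_{t-1}$ had a unique vertex and, for $t \geq 2$, condition $(\bigstar)$ held, producing $\theinvariant_{t} = v(t)$ for $t = 1, \ldots, g$. At the last step, the $\nu$-\qo hypothesis on $\widetilde{f^{(g)}}$ leaves only two possibilities for $\Delta_g$: empty, or a single vertex. If $\Delta_g$ had a unique vertex then the construction would either continue (producing $\widetilde{f^{(g+1)}}$, contradicting that the list of transforms stops at $g$) or assign $\theinvariant_{g+1} = \BAD$ (if $(\bigstar)$ failed, contradicting our stated form of $\theinvariant$). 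Thus $\Delta_g = \emptyset$ and $\theinvariant_{g+1} = \GOOD$, as required. The concluding ``in particular'' is the special case $t = g$ with $\weight_\ast := \weight_g$.

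I do not foresee a serious obstacle; the statement is essentially a bookkeeping exercise that unpacks Construction \ref{The_Construction}. The only subtle interaction is between the $\nu$-\qo property and condition $(\bigstar)$, the latter being strictly stronger, but this subtlety is absorbed implicitly by the hypothesis that each transform $\widetilde{f^{(t)}}$ is defined, since that existence is exactly what $(\bigstar)$ at every previous cycle guarantees.
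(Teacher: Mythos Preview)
The paper gives no proof; it calls the lemma ``an easy consequence of Construction~\ref{The_Construction}'', so your unpacking is the natural route, and your $(\Rightarrow)$ direction is correct.

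Your $(\Leftarrow)$ direction has a genuine gap. Consider the case where $\Delta_g$ has a unique vertex but $(\bigstar)$ of Notation~\ref{Not:star} fails: the construction then sets $\theinvariant_{g+1} = \BAD$ and halts, yet all of $\widetilde{f^{(0)}}, \ldots, \widetilde{f^{(g)}}$ exist and are $\nu$-quasi-ordinary (each $\Delta_t$ has at most one vertex). Your phrase ``contradicting our stated form of $\theinvariant$'' is circular here --- that form is precisely what you are trying to prove, not a hypothesis you may invoke. Nothing in the right-hand side excludes this scenario, because being $\nu$-quasi-ordinary (Definition~\ref{Def:nu_qo_W}) is strictly weaker than ``single vertex and $(\bigstar)$''. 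Your closing paragraph almost catches this, but you only appeal to $(\bigstar)$ at \emph{previous} cycles to justify that the transforms exist; the problematic case is $(\bigstar)$ failing at the \emph{final} cycle $t=g$, where existence of $\widetilde{f^{(g)}}$ tells you nothing. This is really an imprecision in the lemma's statement rather than a hole you can repair within your argument; the paper only uses the forward implication (and the ``in particular'' clause) downstream, so the clean fix is to weaken the lemma to $(\Rightarrow)$ or to add the requirement that $\Delta_g$ be empty to the right-hand side.
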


\begin{Obs}
\label{Obs:Darstellung_der_u_i}
	When the construction ends, we have 
	$$
		{\widetilde{f^{(g)}}} \in \field[[ \x ]] [u_0]_{<n_1} [u_1]_{ <n_2} \cdots[u_{g -1 }]_{< n_g} [ u_g]
	$$	
	and $ \nupoly{ \weight_g }{ {\widetilde{f^{(g)}}} }{ \x, u_0, \ldots, u_{g-1} }{ u_g } $ (with $ u_g $ such that the polyhedron is minimal) is either empty or has more than one vertex.
	If the polyhedron is empty, then $ {\widetilde{f^{(g)}}}= u_g $. 
	Otherwise $ f $ would not be irreducible. 
	
	Furthermore, we constructed an extension of $ \weight_0 : \IZ^d_\gqz \to \IQ^d_\gqz $ (on $ \field [[ \x ]] $) to
	$$
	\begin{array}{c}
		\weight_\ast := \weight_g : \IZ^d_\gqz \times \IZ^g_\gqz \to \IQ^d_\gqz
		\hspace{15pt} (\, \mbox{on } \field[[ \x ]] [ u_0, u_1, \ldots, u_{ g - 1 } ] \,),
		\\[5pt]
			\weight_\ast ( u_j) := \weight_s ( u_j ) = v_{j + 1},
			\hspace{15pt} \mbox{ for all } 0 \leq j \leq g - 1 ,
	\end{array}	
	$$
	where $ v_{j + 1} $ is the unique vertex of $ \nupoly{ \weight_j }{ {f^{(j)}} }{ \x, u_0, \ldots, u_{j-1} }{ u_j } $ 
	(and $ u_j $ is assumed to be chosen such that the polyhedron is minimal).
	The linear map $ W_\ast $ is determined by the  $ d \times ( d + g ) $ matrix with the $ d \times d $ identity matrix $ Id_d $ at first and then the matrix with columns given by the $ v_{j + 1} $, $ 0 \leq j \leq g - 1 $, i.e.,
	\begin{equation}
	\label{eq:matrix_W_ast}
		W_\ast = (\,Id_d \,| \, v_1 \,|\, v_2 \,|\, \cdots \,|\, v_g \,) .
	\end{equation}

	Recall the abbreviation \eqref{eq:abbreviate_G_t+1}.
	Then, using (\ref{eq:u_t+1_formula}) and taking into account that we do some minimizing process \eqref{eq:minimizing} for the polyhedron $ \nupoly{ \weight_{ t + 1 } }{ {\widetilde{f^{(t + 1)}}} }{ \x, u_{< t+1} }{ u_{t + 1 } } $, we obtain, for every $ t \geq 0 $,
	\begin{equation}
	\label{eq:u_t+1}
		u_{t + 1 }  = u_t^{ n_{t+1} } -
	 \coeffforoverweight_t \, \x^ {\a} \u_{< t}^{ \b }+ 
	 \sum_{(\ref{eq:quasi_hom_condi_starplus}) } 
	 \coeffforoverweightplus_{\Alpha, \Beta, \beta_+} \, \x^ {\Alpha} \, \u_{< t }^{ \Beta } \, u_t^{\beta_+} \, ,
	\end{equation}
	where $ \a = \a_{t + 1} \in \IZ^d_\gqz $ and $ \b = \b_{t + 1} \in \IZ^t_\gqz $ is the unique solution for 
	\begin{equation}
	\label{eq:quasi_hom_condi_star}
		\weight_\ast ( \, \a ,\, \b \,)  =  n_{t+1} \cdot v_{t + 1}
	\end{equation}
	(note that this is a reformulation of (\ref{eq:quasi_hom_condi}) using $ \weight_\ast ( \a , \b ) = \weight_t ( \a , \b ) $ since $ \weight_\ast $ is an extension of $ \weight_t $)
	and the second sum ranges over those $ \Alpha  \in \IZ^d_\gqz $ and $ ( \Beta, \beta_+ )  \in \IZ^{ t + 1 }_\gqz $ which fulfill 
	%
	$$
		\frac{ \weight_\ast ( \, \Alpha,\, \Beta \, ) }{ n_{ t + 1 } - \beta_+ }
		 >_\indexorder v_{t+1}
	$$
	%
	By using 
	$
		\weight_\ast ( \, \Alpha,\, \Beta, \, \beta_+ \,) =  \weight_\ast ( \, \Alpha,\, \Beta \,) +  \beta_+ \cdot v_{t+1} $
	we obtain the equivalent condition 
	\begin{equation}
		\label{eq:quasi_hom_condi_starplus}
		 \weight_\ast ( \, \Alpha,\, \Beta, \, \beta_+ \,)
		 >_\indexorder  n_{t+1} \cdot v_{t+1}.
	\end{equation}
\end{Obs}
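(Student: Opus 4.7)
The statement is essentially a bookkeeping summary of Construction \ref{The_Construction}, so my plan is to verify each piece by unwinding the inductive definitions rather than prove a single monolithic claim.

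First I would handle the structural claims. The membership $\widetilde{f^{(g)}} \in \field[[\x]][u_0]_{<n_1}\cdots[u_{g-1}]_{<n_g}[u_g]$ follows by induction on the cycle index: passing from $R_t$ to $R_{t+1}$, every occurrence of $u_t^{n_{t+1}}$ gets rewritten as $z_{t+1}+\coeffforf_{t+1}\x^\a\u_{<t}^\b$, producing a unique representative of degree strictly less than $n_{t+1}$ in $u_t$. The dichotomy ``empty or more than one vertex'' is immediate from the stopping rule of Construction \ref{The_Construction}: had the last polyhedron possessed exactly one vertex and satisfied $(\bigstar)$, another cycle would have been executed. The matrix description \eqref{eq:matrix_W_ast} of $\weight_\ast$ is a direct rewriting of the successive assignments $\weight_{t+1}(u_t):=v(t+1)$ accumulated over all cycles, starting from $\weight_0 = Id_d$.

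Next, the identity $\widetilde{f^{(g)}}=u_g$ in the empty-polyhedron case is where irreducibility enters. An empty polyhedron forces every coefficient of $u_g^c$ for $c<e_g$ to vanish in the expansion of $\widetilde{f^{(g)}}$; monicity in $u_g$ then gives $\widetilde{f^{(g)}}=u_g^{e_g}$. The chain of morphisms $R_0\to R_1\to\cdots\to R_g$ obtained by successively eliminating $z_{t+1}$ via its defining relation in $R_{t+1}$ is a chain of isomorphisms transporting the class of $f$ to that of $\widetilde{f^{(g)}}$. Consequently $f$ would equal an $e_g$-th power in $\field[[\x]][z]$, contradicting irreducibility unless $e_g=1$, whence $\widetilde{f^{(g)}}=u_g$.

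Finally, to derive the formula \eqref{eq:u_t+1}, I would substitute the relation $z_{t+1}=u_t^{n_{t+1}}-\coeffforf_{t+1}\x^\a\u_{<t}^\b$, which holds in $R_{t+1}$, into the minimizing change $u_{t+1}=z_{t+1}+h_{t+1}(\x,\u_{\leq t})$; this supplies the first two terms of the right-hand side. One then analyses which monomials $\coeffforoverweightplus_{\Alpha,\Beta,\beta_+}\x^\Alpha\u_{<t}^\Beta u_t^{\beta_+}$ may appear in $h_{t+1}$: each such monomial is introduced only to eliminate a vertex of $\nupoly{\weight_{t+1}}{\widetilde{f^{(t+1)}}}{\x,\u_{\leq t}}{z_{t+1}}$ lying strictly above $n_{t+1}v(t+1)=\weight_\ast(u_t^{n_{t+1}})$ in the product order, since this very point has already been absorbed into $z_{t+1}$ by the definition of $R_{t+1}$. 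That gives condition \eqref{eq:quasi_hom_condi_starplus}. The main obstacle, in my view, is precisely this last step: one must justify rigorously that every monomial contributing to $h_{t+1}$ obeys $\weight_\ast(\Alpha,\Beta,\beta_+)>_\indexorder n_{t+1}v(t+1)$, which requires tracking carefully how the substitution $u_t^{n_{t+1}}\leftrightarrow z_{t+1}+\coeffforf_{t+1}\x^\a\u_{<t}^\b$ redistributes the Newton points, and invoking the minimality achieved at the previous cycle to rule out lower-weight eliminable vertices surviving.
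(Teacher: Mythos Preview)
Your proposal is correct and matches the paper's approach. The paper treats this statement as an \emph{Observation} with only inline justification---it simply cites \eqref{eq:u_t+1_formula} and the minimizing step \eqref{eq:minimizing} and then asserts \eqref{eq:u_t+1} together with the weight conditions---so your write-up is in fact more detailed than what the paper provides; in particular, the technical point you flag as the main obstacle (that every monomial of $h_{t+1}$ obeys \eqref{eq:quasi_hom_condi_starplus}) is genuinely glossed over there, and your suggested route via the substitution and the uniqueness of the vertex $v(t+1)$ at the previous cycle is the natural way to fill it in.
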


\bigskip

To illustrate the construction of the invariant, let us do some examples before continuing with our results.

\begin{Ex}
\label{Ex:InitialnoteasyLATER}
	Consider the curve determined by
	$$
		f = \left[ \, ( z^7 - x^7 )^2 - x^{21} - x^{18} z^3 \, \right]^2 + x^{43} \in \field [[x]][z]
	$$
	The construction tells us to start with $ \weight_0 $.
	As one computes easily, the polyhedron $ \nupoly{\weight_0}{f}{x}{z} \subset \IR_\gqz $ is minimal, hence $ u_0 = z $.
	Moreover, it has the only vertex $ v_1 = 1 $ and
	$$
		F_{v_1, \weight_0 } = ( U_0^7 - X^7 )^2 .
	$$
	{The attentive reader surely recognizes that $ \gamma_1 := v_1 \in \IZ $.
	This implies that $ U_0^7 - X^7 $ is not irreducible and hence $ F_{v_1, \weight_0 } $ is not the power of a single irreducible binomial as required in the construction. 
	Thus one of the assumptions must fail -- namely, the given $ f $ is not irreducible, see for example \cite{GR_Bernd} Theorem 2.4}.

	{If we ignore the latter and introduce the $ R_1 $ as in the construction then we have} $ z_1 = u_0^7 - x^7 $,
	$$
		{\widetilde{f^{(1)}}} = \left[ \, z_1^2 - x^{21} - x^{18} u_0^3 \, \right]^2 + x^{43}
	$$
	and the extension of $ W_0 $ to $ W_1 : \IZ^2_\gqz \to \IQ_\gqz $ is given by $ W_1 ( x ) = 1 , W_1 ( u_0 ) = 1 $.
	The polyhedron $ \nupoly{\weight_1}{{\widetilde{f^{(1)}}}}{x, u_0}{z_1} \subset \IR_\gqz $ has the single vertex $ v_2 = \frac{21}{2} $ and 
	$$
		{\widetilde{F^{(1)}}}_{v_2, \weight_1 } = ( Z_1^2 - X^{21} - X^{18} U_0^3 )^2 .
	$$
	(This looks familiar, see Example \ref{Ex:Initialnoteasy}).
	{Since we have $ x^7 = u_0^7 - z_1 $ in $ R_1 $ we can not proceed as in Example \ref{Ex:InitialNotGoodForm} in order to get the desired shape of the initial form}. 
\end{Ex}

\begin{Ex}
\label{Ex:QO_Example}
	Let us give an example of an irreducible hypersurface. 
	Consider
	$$ 
		f = ( z^2 - x_1 x_2 x_3 )^3 - x_1^ 6 x_2^7 x_3^3 z.
	$$ 
	The polyhedron $ \nupoly{\weight_0}{f}{x}{z} \subset \IR^3_\gqz $ is minimal, ${u_0 = z}$, and has only one vertex $ v_1 = ( \frac{1}{2}, \frac{1}{2}, \frac{1}{2} ) $.
	We have
	$$
		F_{v_1, \weight_0 } = ( {U_0^2} - X_1 X_2 X_3 )^3 .
	$$
	Hence we {have $ z_1 =   u_0^2 - x_1 x_2 x_3 $ in $ R_1 $}.
	Then we extend $ \weight_0 $ to $ \weight_1 $ by defining $ \weight_1 ( {u_0} ) := ( \frac{1}{2}, \frac{1}{2}, \frac{1}{2} ) $.
	We get
	$$
		{\widetilde{f^{(1)}}} = {z_1^3} - x_1^6 x_2^7 x_3^3 {u_0}
	$$
	and $ \nupoly{\weight_1}{{\widetilde{f^{(1)}}}}{x, {u_0}}{{z_1}} \subset \IR^3_\gqz $ has the single vertex $ v_2 = ( \frac{5}{2},  \frac{17}{6}, \frac{3}{2}) $, {$ u_1 = z_1 $.
	Furthermore, $ f^{(1)} = \widetilde{f^{(1)}} $ and}
	$$
		F^{(1)}_{v_2, \weight_1 } =   U_1^3 - X_1^ 6 X_2^7 X_3^3 {U_0}
	$$
	Therefore {we obtain $ u_2 = z_2 = {f^{(1)}} $ in $ R_2 $} and the construction ends:
	$$
		\theinvariant ( f; x; z ) = \left(\, ( \frac{1}{2}, \frac{1}{2}, \frac{1}{2} ); \, ( \frac{5}{2},  \frac{17}{6}, \frac{3}{2}); \, \GOOD \, \right).
	$$
\end{Ex}


\begin{Ex}
	We encourage the studious reader to apply the construction for
	$$
		f = ( z^3 - x_1^3 x_2 )^2 + 2 ( z^3 - x_1^3 x_2 )  x_1^3 x_2 x_3^7 z + x_1^6 x_2^2 x_3^{14} z^2 - x_1^{37} x_2^{11} x_3^{73} z.
	$$ 
\end{Ex}

\medskip

The main result of this article is the following characterization theorem.


\begin{Thm}
\label{MainTheorem_TEXT}
	Let $ f \in \field [[ \x ]] [ z ] $ be an irreducible Weierstrass polynomial of degree $ n $. 
	Then $ f $ is a  \qo polynomial if and only if the last entry of
	$ 
		\theinvariant(f;\x;z)
	$
	is $ \GOOD $.
\end{Thm}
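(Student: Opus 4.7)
\medskip

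\textbf{Plan of proof.} The statement is a two-sided implication, and my plan is to treat the two directions by quite different methods. Each cycle in Construction \ref{The_Construction} records the minimal polyhedron vertex $v(t+1)$, which on the GOOD side I interpret as weighted coordinates on a toric ambient space, and on the QO side I read off directly from the classical data (characteristic exponents, approximate roots) of a quasi--ordinary singularity. I will describe each direction in turn.

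\medskip

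\textbf{Direction ($\xi=\GOOD$) $\Longrightarrow$ $f$ quasi-ordinary.} Assume $\theinvariant(f;\x;z)=(v(1);\dots;v(g);\GOOD)$. Lemma \ref{Lem:Cycles_works_nu_qo} and Observation \ref{Obs:Darstellung_der_u_i} provide a linear map $\weight_\ast:\IZ^{d+g}_\gqz\to\IQ^d_\gqz$ whose matrix is $(\,Id_d\,|\,v(1)\,|\,\cdots\,|\,v(g)\,)$, and equations \eqref{eq:u_t+1}--\eqref{eq:quasi_hom_condi_starplus} expressing each $u_{t+1}$ as $u_t^{n_{t+1}}-\coeffforoverweight_t\,\x^{\a}\u_{<t}^{\b}$ plus a sum of strictly heavier monomials (with the final relation $u_g=0$). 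The plan is to show these relations exhibit $(V,0)\subset \IA^{d+g+1}_\field$ as the fiber above $T=1$ of the overweight deformation $\mfX=V(F_0,\dots,F_{g-1})$ described in the introduction, whose special fiber $\mfX_0$ is the affine toric variety parametrized by $(x_i,u_j)=(s_i^n, S^{n\cdot v(j+1)})$. I will then lift this toric parametrization to $V=\mfX_1$ by successive approximation: using the strict inequality $\weight_\ast(\text{new terms})>_{\indexorder} n_{t+1}v(t+1)$ supplied by \eqref{eq:quasi_hom_condi_starplus}, each $u_{t+1}=0$ can be solved for $u_t$ as a power series in $s_1,\dots,s_d$ with principal part $\coeffforoverweight_t^{1/n_{t+1}}S^{n\cdot v(t+1)}$. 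Iterating from $t=g-1$ down to $t=0$ (with $u_0=z+h_0(\x)$) produces a power series root $\zeta\in\field[[s_1,\dots,s_d]]$; substituting the $n$-th roots of unity for the $n_i$-th roots taken along the way gives all $n$ conjugate roots. From this I read off the differences $\zeta^{(i)}-\zeta^{(j)}$: the dominating contribution comes from the first index where the chosen roots of unity differ, and the condition $v(t+2)>_{\indexorder} n_{t+1}v(t+1)$ together with $v(t+2)\notin \IZ^d+\IZ v(1)+\cdots+\IZ v(t+1)$ (i.e.\ $(\star 1)$ and $(\star 2)$) prevents cancellations and gives $\zeta^{(i)}-\zeta^{(j)}=\x^{\lambda_{ij}}\epsilon_{ij}$ with unit $\epsilon_{ij}$. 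Hence the discriminant $\prod_{i<j}(\zeta^{(i)}-\zeta^{(j)})^2$ is a monomial times a unit, proving $f$ is quasi--ordinary.

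\medskip

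\textbf{Direction $f$ quasi-ordinary $\Longrightarrow$ $\xi=\GOOD$.} I plan to argue by induction on the number $g$ of characteristic exponents. Assuming $f$ is quasi--ordinary with characteristic exponents $\lambda_1<_{\indexorder}\dots<_{\indexorder}\lambda_g$ and associated $\gamma_1,\dots,\gamma_g$ as in \eqref{eq:qo_def_lamda}, I use the existence of approximate roots (semi--roots, key polynomials) $q_0=z,q_1,\dots,q_{g-1},q_g=f$ of Gonz\'alez P\'erez, in terms of which $f$ admits a $q$-expansion whose dominating monomial at each level is exactly of the binomial shape $(U_t^{n_{t+1}}-\coeffforf_{t+1}\X^{\a}\U_{<t}^{\b})^{e_{t+1}}$ with $\frac{\weight_t(\a,\b)}{n_{t+1}}=\gamma_{t+1}$. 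Matching $u_t$ with (a Tschirnhausen-adjusted version of) $q_t$ and $v(t+1)$ with $\gamma_{t+1}$, I verify inductively: (i) the polyhedron $\nupoly{\weight_t}{\widetilde{f^{(t)}}}{\x,\u_{<t}}{z_t}$ has the single vertex $\gamma_{t+1}$ and its initial form is the prescribed binomial power, so $\widetilde{f^{(t)}}$ is $\nu$--quasi-ordinary w.r.t.\ $\weight_t$; (ii) condition $(\bigstar)$ holds, since $(\star 1)$ is $\gamma_{t+2}>_{\indexorder}n_{t+1}\gamma_{t+1}$, which translates (via \eqref{eq:qo_def_lamda}) to $\lambda_{t+2}>_{\indexorder}\lambda_{t+1}$, and $(\star 2)$ is $\lambda_{t+2}\notin M_{t+1}$, both of which are built into the definition of characteristic exponents; (iii) the factorization $(\star 0)$ follows because the $q$-expansion's leading term at each level is literally a single binomial power. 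After $g$ cycles one reaches $u_g=\text{const}\cdot q_g=\text{const}\cdot f$, so $\widetilde{f^{(g)}}=u_g$ and the polyhedron in the $(g+1)$-st step is empty; hence $\xi=\GOOD$. This is essentially Proposition \ref{sdir} in the paper.

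\medskip

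\textbf{Main obstacle.} The heart of the argument is clearly Direction $(\Leftarrow)$: producing an honest power--series parametrization of $V$ from the purely formal overweight data. The inductive solvability of $u_{t+1}=0$ in the Krull topology rests on the strict overweight inequality \eqref{eq:quasi_hom_condi_starplus}, which must be used carefully to run a fixed-point/Newton iteration that converges in $\field[[s_1,\dots,s_d]]$, and then to show that distinct conjugate choices of roots of unity yield differences with monomial leading term and unit remainder. Both of these steps lean crucially on $(\star 1)$--$(\star 2)$, which are thus not ad hoc conditions but exactly the combinatorial shadow of the lattice growth $M_t\subsetneq M_{t+1}$ of characteristic lattices in the quasi--ordinary case. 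The other direction is, by comparison, a calculation once the right dictionary between $(u_t,v(t+1))$ and $(q_t,\gamma_{t+1})$ is set up.
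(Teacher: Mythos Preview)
Your proposal is correct and follows essentially the same route as the paper: the easy direction is Proposition~\ref{sdir} via Gonz\'alez~P\'erez's approximate-root expansion (your dictionary $u_t\leftrightarrow q_t$, $v(t+1)\leftrightarrow\gamma_{t+1}$ is exactly theirs), and the hard direction is the overweight-deformation argument of Section~4 (Lemma~\ref{PG}, Theorem~\ref{AJ}, Observation~\ref{Obs:Computing_roots}, Proposition~\ref{Prop:Remain_Main}), lifting the toric parametrization of $\mfX_0$ to $\mfX_1$ and reading off that the root differences are monomials times units. One small caution: your phrase ``iterating from $t=g-1$ down to $t=0$, each $u_{t+1}=0$ solved for $u_t$'' is misleading as stated, since $F_{g-1}=0$ involves all of $\u_{<g-1}$; the paper instead plugs the full ansatz \eqref{eq:para_T_one} into every $f_r$ simultaneously and uses the linearity \eqref{smooth} of $f_{r,\N}$ in $U_{r,\M(\N)}$ to solve coefficient-by-coefficient---you will need that simultaneous setup (or an equivalent careful ordering) when you write the details.
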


\smallskip

	If the last entry of $ \theinvariant(f;\x;z) $ is $ \BAD $, then the theorem implies that $ \field[[\x]] \hookrightarrow \field[[\x]][z]/ \langle f \rangle $ is not a \qo projection. 
	But still it is possible that there is another projection such that the given singularity is quasi-ordinary.
	
	\begin{Ex}
	\label{Ex:different_projection}	
	For $ f = x_1^2 + z^3 + z^2 x_2 $ we have 
	$ \theinvariant ( f;\x;z) = \BAD $.
	On the other hand, if we pick $ ( y_1, y_2, w ) = ( z, x_2, x_1 ) $ then $ f = w^2 + y_1^3 + y_1^2 y_2 = w^2 + y_1^2 ( y_1  + y_2 ) $ is quasi-ordinary, 
	$ \theinvariant ( f;\y;w) = \left( \, (1, \frac{1}{2} ) ; \GOOD \, \right) $.
	\end{Ex}

We begin by proving the easy direction of the theorem. The other direction is left to the next section.

\begin{Prop}\label{sdir} Let $ f \in \field [[ \x ]] [ z ] $ be an irreducible Weierstrass polynomial of degree $ n .$ If $ f $ is \qo then the last entry of $ \theinvariant(f;\x;z) $ is $ \GOOD.$
\end{Prop}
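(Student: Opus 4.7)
The plan is to use the theory of approximate roots (in the sense of \cite{GP1}) and the expansion of $ f $ they afford, then verify inductively that the construction of $ \theinvariant $ reproduces the characteristic data of the quasi-ordinary singularity.

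First, I would recall that for an irreducible quasi-ordinary $ f $ with characteristic exponents $ \lambda_1 <_\indexorder \ldots <_\indexorder \lambda_g $ and indices $ n_i = [M_i : M_{i-1}] $, there exist approximate (or semi-) roots $ q_1, q_2, \ldots, q_g $, where $ q_i \in \field[[\x]][z] $ is a monic polynomial of degree $ n/(n_1 \cdots n_i) $. These satisfy a $ q $-adic expansion of $ f $ whose structure is rigid: each $ q_i $ is itself quasi-ordinary, and $ f $ admits an expansion where the dominant terms at the relevant weight recover the characteristic monomials $ \x^{\a(i)} q_0^{b_0} \cdots q_{i-1}^{b_{i-1}} $, with the exponents dictated by the $ \gamma_i $ of \eqref{eq:qo_def_lamda}.

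The heart of the argument would then proceed by induction on $ t \in \{0, \ldots, g-1\} $, showing that at the $ (t+1) $-st cycle of Construction \ref{The_Construction} one may take $ u_t $ to be (up to a unit-preserving change) the approximate root $ q_t $. Given this identification, the minimal polyhedron $ \nupoly{\weight_t}{\widetilde{f^{(t)}}}{\x, \u_{<t}}{u_t} $ has a unique vertex $ v(t+1) $ which, via \eqref{eq:W_poly=W(poly)} and the definition of $ \weight_t $, coincides precisely with the $ \gamma_{t+1} $-vector encoded in the approximate-root expansion. I would then verify the three conditions of Notation \ref{Not:star}: $(\star 0) $ follows from the fact that the contact polynomial at level $ t+1 $ is a pure power of a single binomial (a classical feature of approximate roots of a QO hypersurface, cf.\ Proposition \ref{Prop:ProductofBinomials}); $(\star 1) $ is a reformulation of $ \lambda_{t+2} >_\indexorder \lambda_{t+1} $ in the $ \gamma $-coordinates via \eqref{eq:qo_def_lamda}; and $(\star 2) $ translates the strict lattice inclusion $ M_{t+1} \subsetneq M_{t+2} $, i.e.\ $ n_{t+2} > 1 $.

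After $ g $ cycles one arrives at $ \widetilde{f^{(g)}} $ whose polyhedron $ \nupoly{\weight_g}{\widetilde{f^{(g)}}}{\x, \u_{<g}}{u_g} $ must be empty: indeed, by Observation \ref{Obs:Darstellung_der_u_i}, non-emptiness combined with irreducibility of $ f $ would force an additional characteristic level, contradicting the assumption that there are exactly $ g $ characteristic exponents. Equivalently, the $ g $-th approximate root has trivial remainder in the $ q_g $-adic expansion, so $ \widetilde{f^{(g)}} = u_g $. Hence $ \theinvariant(f;\x;z) = (v(1); \ldots; v(g); \GOOD) $.

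The main obstacle I anticipate is step two: rigorously identifying the variables $ u_t $ produced by the minimization procedure \eqref{eq:minimizing} with the approximate roots $ q_t $. The approximate roots are defined intrinsically (via Tschirnhausen-type projections), while the $ u_t $ arise from a possibly infinite formal minimization of successive weighted polyhedra; to match them, one needs to show that no term of strictly larger $ \weight_t $-weight than $ n_t \cdot v(t) $ can survive the minimization, which amounts to the statement that the approximate-root expansion of $ f $ has no ``parasitic'' terms capable of producing a vertex below $ v(t+1) $. This should ultimately follow from the uniqueness of the $ \gamma $-coordinates and the characterization of QO expansions in \cite{GP1}, but it is the step that requires the most care.
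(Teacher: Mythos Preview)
Your approach is essentially the same as the paper's: both invoke the approximate-root expansion from \cite{GP1} and identify $u_t$ with the approximate root $q_t$, concluding $\theinvariant(f;\x;z)=(\gamma_1;\ldots;\gamma_g;\GOOD)$. The paper's proof is simply more compressed: it cites \cite{GP1}, Lemma~35, which already provides the expansion
\[
c_j^{*}q_j = q_{j-1}^{n_j} - c_j\,\x^{\a(j)}\q_{<j-1}^{\b(j)} + \sum c_{\Alpha,\Beta}\,\x^{\Alpha}\q_{<j}^{\Beta}
\]
together with the weight inequality $n_j\gamma_j <_\indexorder \Alpha+\Beta_1\gamma_1+\cdots+\Beta_j\gamma_j$ for every remainder term, and then observes that feeding this into Construction~\ref{The_Construction} forces $u_i=q_i$ at each step. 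In particular, the ``main obstacle'' you flag --- ruling out parasitic terms below $v(t+1)$ after minimization --- is exactly what that cited lemma delivers for free: the higher-weight condition on the remainder guarantees that the approximate roots already realize the minimal polyhedra, so no further care is needed beyond invoking the reference.
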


\begin{proof}
	The main ingredient is the description of the equation of a quasi-ordinary
	polynomial $ f$ in terms of its approximate roots by Gonzalez-Perez \cite{GP1}. 
	First note that
	since the projection induced by $ k[[x]]\longrightarrow {\field[[\x]]}/{(f)}$ is quasi-ordinary, there
	exists a change of variables of the type $ x_i\mapsto x_i +h_i(\x), z\mapsto z+h(\x) $, with $ h_i,h \in \field[[x]]$ such that
	the polynomial obtained from $f$ after this change of variables is quasi-ordinary. 
	We assume that the coordinates
	$(\x,z)$ already satisfy this property (after a possible change of variables).  
	Let $n_j,\gamma_j,j\in\{1,\ldots,g\}$ be the invariants
	associated with $f$ as in the first section. 
	For $ j \in \{ 0,\ldots,g-1 \} $, let $ q_j $ be the approximate root of $ f $ of degree  $ \frac{n_1\cdots n_g}{n_{j+1}\cdots n_g}$
	and set $ q_g : = f $. 
	It follows from \cite{GP1} Lemma 35 that,
	for $ j \in\{ 1,\ldots,g \} $, the approximate roots $q_j$
	satisfy
	$$
		c_j^*q_j
		=
		q_{j-1}^{n_j}
		-
		c_j \, \x^{\a_j} \, \q_{<j-1}^{\b_j}
		+
		\sum
		c_{\Alpha, \Beta} \, \x^{\Alpha} \, \q_{<j }^{ \Beta }
	$$
	(similar to \eqref{eq:abbreviate_G_t+1}, we abbreviate
	$ \x^{\a_j} \, \q_{<j-1}^{\b_j}
	= x_1^{\a_{j,1}} \cdots x_{d}^{\a_{j,d}}
	q_0^{\b_{j,1}} \cdots q_{j-2}^{\b_{j,j-1}} $,
	etc.),
	where
	$ c_j^*$, $c_j\in\field^\times$,
	and
	$ c_{\Alpha, \Beta} \in \field $,
	and
	$ \a_j $, $\Alpha \in \IZ^d_\gqz $,
	and
	$ (\, \b_j, 0 \,)$, $\Beta_j \in \IZ^{j}_\gqz $ with
	$ \b_{j,i} $, $ \Beta_{j,i} \leq n_i $ (for $ i \in \{ 1, \ldots, j\} $),
	and,
	for all $ (\Alpha, \Beta ) $ with $ c_{\Alpha, \Beta}  \neq 0 $,
	$$
	n_j \gamma_j
	=
	\a_j + \b_{j,1} \gamma_1 + \ldots + \b_{j,j-1} \gamma_{j-1}
	<_\indexorder
	\Alpha + \Beta_1 \gamma_1 + \ldots + \Beta_{j} \gamma_{j}
	.
	$$
	Since $ f = q_g $, applying Construction \ref{The_Construction} shows
	that
	$$
		u_j = q_j,
	\ \ 
	\mbox{ for all } j \in \{ 0,\ldots,g \}  
	.$$
	Indeed, the only thing that we need   to verify is that there is no need to make changes in the $\x$ variables in order to minimize the polyhedra at the different steps of the algorithm; but since every polyhedron at each step of
	the construction  have already only one vertex,  any changes in $\x$ would at best not change the polyhedra or make the polyhedra larger. 
	Hence we obtain $\theinvariant(f;\x;z)=(\gamma_1;\ldots;\gamma_g ;  \GOOD).$
\end{proof}

As a direct application of the proof of Proposition \ref{sdir} 
and Remark \ref{Rk:Approx_Roots}
we obtain a new algorithm to determine approximate roots (or semi-roots) of a \qo polynomial :

\begin{Cor}\label{AR}Let $ f \in \field [[ \x ]] [ z ] $ be an irreducible \qo polynomial of degree $ n $ and let
		$$\theinvariant(f;\x;z) = ( \, v_1;\, v_2; \, \ldots; \, v_g; \, \GOOD \,) .$$
	If we use the Tschirnhaus transformation to minimize the polyhedra in each step of Construction \ref{The_Construction}, then the approximate roots of $f$ are given by the constructed $u_i $, $ i \in \{ 0,\ldots,g \} $.
\end{Cor}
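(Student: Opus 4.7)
The plan is to observe that this corollary is essentially extracted from the proof of Proposition \ref{sdir}, so no new work is required beyond unpacking what was already established. First, I would recall the setup: in the proof of Proposition \ref{sdir}, one uses the Gonz\'alez P\'erez expansion (\cite{GP1}, Lemma 35) of a \qo polynomial $f$ in terms of its approximate roots $q_0,\ldots,q_g$, with $q_g=f$ and $q_0$ playing the role of the Tschirnhausen variable. The key identity one reads off from that expansion is
$$
c_j^* q_j = q_{j-1}^{n_j} - c_j\,\x^{\a(j)}\,\q_{<j-1}^{\b(j)} + \sum c_{\Alpha,\Beta}\,\x^{\Alpha}\,\q_{<j}^{\Beta},
$$
with the weighted exponent inequality that places the error terms strictly above $n_j \gamma_j$ in the product order.

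Next, I would compare this identity term-by-term with the recursion \eqref{eq:u_t+1} defining $u_{t+1}$ in Construction \ref{The_Construction}. Under the identification $u_0 = q_0$ (which holds because the Tschirnhausen choice and the minimality of the polyhedron in the first cycle pin down $u_0$ uniquely up to the changes used in the construction), and assuming inductively that $u_i = q_i$ for $i \leq j-1$, the leading binomial part of the defining equation of $q_j$ agrees with the binomial determining $u_j$ via the unique vertex $v(j) = \gamma_j$ of the minimized polyhedron, while the higher-weight sum on the right-hand side corresponds exactly to the tail $h_j(\x,\u_{\leq j-1})$ added in \eqref{eq:minimizing} to minimize the polyhedron. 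Since both $u_j$ and $c_j^* q_j$ are determined by the same recipe (subtract the binomial at the unique vertex, then add the unique combination of higher-weight monomials forcing the next polyhedron to be minimal), they coincide up to the unit $c_j^*$, and this unit is harmless because the approximate roots are well defined only up to a unit.

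Finally, I would invoke the termination statement already built into the proof of Proposition \ref{sdir}: the construction ends at step $g$ with $u_g = q_g = f$, confirming that we have recovered all the approximate roots. The main (and really only) delicate point to verify carefully is the compatibility of the normalization conventions on both sides: Gonz\'alez P\'erez's approximate roots are monic of a prescribed degree and normalized by the condition on the sub-leading coefficient, while our $u_i$ are normalized by the Tschirnhausen condition in each step; one must check that these two normalizations agree (up to the aforementioned units $c_j^*$), which follows from the uniqueness of the vertex $v(j)$ and of the minimizing translation \eqref{eq:umini}. Once this matching is noted, the corollary is immediate.
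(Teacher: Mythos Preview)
Your proposal is correct and follows the same approach as the paper: the corollary is stated there as ``a direct application of the proof of Proposition \ref{sdir},'' and you are simply unpacking that application by recalling the Gonz\'alez P\'erez expansion and matching it inductively against the recursion \eqref{eq:u_t+1}, exactly as the proof of Proposition \ref{sdir} does when it concludes $u_i = q_i$. One small caution: your remark that approximate roots are ``well defined only up to a unit'' is not quite accurate for the classical Abhyankar--Moh approximate roots (which are monic and uniquely determined), so the matching of normalizations you flag is in fact an exact equality $u_i = q_i$ rather than an equality up to units---this is consistent with the paper's claim and with the uniqueness of the minimizing translation in \eqref{eq:umini}.
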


%
%
%
%
%
%
%
%
%
%
%
%
%
%
%
%
%

\medskip

\section{Computing roots by using overweight deformations}
Finally, we show the remaining part for the
proof of Theorem \ref{MainTheorem_TEXT} 
in

\begin{Prop}
\label{Prop:Remain_Main}
	Let $ f \in \field [[ \x ]] [ z ] $ be an irreducible polynomial of degree $ n $.  
	Suppose the last entry of
	$ 
		\theinvariant(f;\x;z)
	$
	is $ \GOOD $.
	Then $ f $ is \qo.
\end{Prop}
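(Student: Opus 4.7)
The plan is to exploit the equations assembled in Observation \ref{Obs:Darstellung_der_u_i} to realize $V = \{f = 0\}$, embedded in $\IA_\field^{d+g+1}$ via the auxiliary variables $(\x, u_0, \ldots, u_{g-1})$, as the generic fiber ($T=1$) of the overweight deformation $\mfX \to \mathrm{Spec}\,\field[T]$ whose special fiber ($T=0$) is the toric variety $\mfX_0$ described in the introduction. Concretely, $\mfX_0$ is cut out by the $g$ binomial equations $u_t^{n_{t+1}} = \coeffforoverweight_{t+1}\,\x^{\a(t+1)}\,\u_{<t}^{\b(t+1)}$ for $0 \leq t \leq g-1$ (with the convention $u_g = 0$), and admits the monomial parametrization
$$
\x = (s_1^n, \ldots, s_d^n),\qquad u_j = \coeffforoverweight_{j+1}' \cdot S^{\,n \cdot v(j+1)}, \quad 0 \leq j \leq g-1,
$$
for suitable constants $\coeffforoverweight_{j+1}' \in \field^\times$, where $S=(s_1,\ldots,s_d)$; that the exponents balance is exactly the quasi-homogeneity identity \eqref{eq:quasi_hom_condi_star}, and that they lie in $\IZ^d$ follows from $n = n_1 n_2 \cdots n_g$.

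The heart of the argument is to lift this parametrization from $\mfX_0$ to $\mfX_1 = V$. The overweight condition \eqref{eq:quasi_hom_condi_starplus} says that every additional term $\x^{\Alpha}\u_{<t}^{\Beta}u_t^{\beta_+}$ appearing in \eqref{eq:u_t+1} has strictly larger $\weight_\ast$-weight than the principal binomial. I would solve the system recursively from the bottom up: the equation $u_g = 0$ determines $u_{g-1}$ as a formal power series in $S$ whose $\weight_\ast$-leading monomial is $\coeffforoverweight_{g}'\,S^{n v(g)}$; substituting this into the equation defining $u_{g-1}$ determines $u_{g-2}$ with leading term $\coeffforoverweight_{g-1}'\,S^{n v(g-1)}$, and so on, down to $u_0 \in \field[[S]]$ with leading term $\coeffforoverweight_{1}'\,S^{n v(1)}$. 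At each stage the equation to be solved has the form (principal binomial) $=$ (strictly $\weight_\ast$-heavier tail), which can be treated by a Hensel--Newton iteration in the $\weight_\ast$-graded completion --- this weighted implicit function step is the main technical obstacle, but the strict inequality in \eqref{eq:quasi_hom_condi_starplus} is precisely what makes the iteration converge in $\field[[S]]$.

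Once the parametrization $(\x(S), u_0(S), \ldots, u_{g-1}(S)) \in \field[[S]]^{d+g}$ is in hand, the $n$ roots of $f$ are produced by extracting $n$-th roots of $\x$: specializing $s_i \mapsto \omega_i \cdot x_i^{1/n}$ with $\omega_i$ running over the $n$-th roots of unity yields a family of series $u_0 \in \field[[x_1^{1/n},\ldots,x_d^{1/n}]]$, and via the relation $u_0 = z + h_0(\x)$ these give the roots $\zeta^{(1)}, \ldots, \zeta^{(n)}$ of $f(\x,z)$ (distinctness following from irreducibility of $f$ together with the fact that different $n$-th roots of unity produce different leading monomials in the $S$-expansion of $u_0$, which is detected at some later level $u_j$ by the construction). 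For any pair of distinct roots the difference $\zeta^{(k)} - \zeta^{(\ell)}$ inherits from the $S$-expansion a leading monomial of the form $(\text{unit}) \cdot \x^{\lambda_{k\ell}}$ with $\lambda_{k\ell} \in \tfrac{1}{n}\IZ^d_\gqz$, whence $\zeta^{(k)} - \zeta^{(\ell)} = \x^{\lambda_{k\ell}} \cdot \epsilon_{k\ell}$ for a unit $\epsilon_{k\ell}$. Multiplying over the $\binom{n}{2}$ pairs, the discriminant $\Delta_z f$ is a monomial in $\x$ times a unit in $\field[[\x]]$, which is the definition of $f$ being \qo \wrt the projection defined by $(\x)$.

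The principal obstacle is the Hensel step of Paragraph~2: making precise the claim that an overweight perturbation of a quasi-homogeneous binomial system admits a formal parametrization extending that of its toric degeneration. I would expect to handle this by induction on $g - t$ using a weighted version of Tougeron's implicit function theorem applied to the equations \eqref{eq:u_t+1}, leveraging that the weight map $\weight_\ast$ in \eqref{eq:matrix_W_ast} has full rank $d$ so that weight-strata are finite-dimensional. A secondary bookkeeping point is to verify that the $n$ specializations really produce $n$ distinct roots rather than collapsing; this will follow from tracking how the roots of unity act on the leading terms of the successive $u_j$.
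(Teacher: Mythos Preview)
Your overall strategy matches the paper's: realize $V(f)$ as the generic fibre of an overweight deformation of a toric variety, lift the toric parametrization to $V(f)$, and read off the roots of $f$ as fractional power series. However, two points diverge from what the paper actually does, and one of them is a genuine gap.

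\textbf{The lifting step is elementary, not the obstacle.} You flag the Hensel/Tougeron step as ``the main technical obstacle''. In the paper this is Lemma~\ref{PG}, handled by a direct coefficient-matching argument: one writes each $u_r$ as a power series in $S$ with leading term $c_r S^{n\gamma_{r+1}}$ and undetermined higher coefficients, substitutes into the equations $f_r=0$, and observes that the resulting equation for the coefficient of $S^{\N}$ in $f_r$ is \emph{linear} in the unknown $U_{r,\M(\N)}$ (equation~\eqref{smooth}). No implicit-function machinery is needed. Note also that your ``bottom-up'' recursion is not quite right as stated: the equation $f_{g-1}=0$ involves all of $u_0,\ldots,u_{g-1}$, so it does not by itself determine $u_{g-1}$ in terms of $S$; one must solve the full system simultaneously, which the linearity observation makes triangular.

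\textbf{The real work is the monomial-times-unit claim.} You assert that $\zeta^{(k)}-\zeta^{(\ell)}$ ``inherits a leading monomial'' from the $S$-expansion and is therefore $\x^{\lambda_{k\ell}}\cdot(\text{unit})$. This is precisely what must be proved, and it is not automatic: a series in $\field[[S]]$ having a $\weight_\ast$-leading term is far weaker than its support having a unique $\leq_\indexorder$-minimal element. The paper devotes Observation~\ref{Obs:Computing_roots} to this. The key is to index the $n$ roots not by $d$-tuples of $n$-th roots of unity acting on the $s_i$ (your proposal, which gives $n^d$ choices collapsing in an uncontrolled way), but by the $n_1 n_2\cdots n_g=n$ choices of roots of unity $(\eta_1,\ldots,\eta_g)$ determining the constants $(c_0,\ldots,c_{g-1})$ of \eqref{coeff_c_t}. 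The explicit recursion~\eqref{eq:claim_f_0_N} then yields the structure~\eqref{eq:Z_series_nice}: the coefficient $Z_\M$ of $S^\M$ in the expansion of $u_0$ depends only on $(c_0,\ldots,c_r)$ whenever $\M \not\geq_\indexorder n\lambda_{r+2}$, and the coefficient at $\M=n\lambda_{r+1}$ is linear in $c_r$ with nonzero slope. Hence if two roots first differ at the choice of $c_r$, their difference is supported in $n\lambda_{r+1}+\IZ^d_\gqz$ with nonzero term at $n\lambda_{r+1}$, giving $\zeta-\zeta'=\x^{\lambda_{r+1}}\cdot(\text{unit})$. This structural computation --- not the lifting --- is where the quasi-ordinary conclusion is earned; your ``secondary bookkeeping point'' is in fact the heart of the matter.
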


The strategy of the proof is as follows:
First, we analyse the data provided by the assumption that the last entry of our invariant is $ \GOOD $.
More precisely, we recall briefly Observation \ref{Obs:Darstellung_der_u_i} and explain how this is connected with an overweight deformation.
From this we deduce a parametrization of the singularity which then yields a statement about the roots of $ f $ (Theorem \ref{AJ}).
In the final step, we study the parametrization closely and compute the difference of two roots of $ f $ explicitly.
Since the discriminant is determined by the square of these differences this implies then that $ f $ is quasi-ordinary.

\begin{center}
	--------------------------------------------
\end{center}

As we have seen in Observation \ref{Obs:Darstellung_der_u_i}, if the last entry of $ \kappa(f;\x,z) $ is $ \infty $, we constructed a linear map $ \weight_\ast $ which can be identified with a weight map 
$$
	W := W_\ast : \field [[\x]][{u_0},u_1,\ldots,u_{g-1}] \longrightarrow \IQ^d.
$$
By (\ref{eq:u_t+1}), the singularity $ V( f) $ can be identified with the fiber $ \mfX_1 $ above $ T = 1 $ of the map $ \mfX \longrightarrow \mbox{Spec}~\field[T],$ for $ \mfX = V ( F_0, \ldots, F_{g-1} ) $ the variety given by 
$$ 
	F_t \in \field [[\x]][{u_0},u_1,\ldots,u_{g-1},T] \,\,, \,\,\, t \in \{ 0,\ldots, g-1 \} \, ,
$$
where (using the abbreviation (\ref{eq:abbreviate_G_t+1}) and {Proposition \ref{Prop:ProductofBinomials}}) %
\begin{equation}
\label{OD}
F_t = T u_{t + 1 }  - u_t^{ n_{t+1} } +	\coeffforoverweight_t \, \x^ {\a} \u_{< t}^{ \b } -
	 T \cdot \left ( 
	 \sum_{(\ref{eq:quasi_hom_condi_starplus}) } \,
	 \coeffforoverweightplus_{\Alpha, \Beta,\beta_+} \, \x^{\Alpha} \, \u_{< t }^{ \Beta } \, u_t^{\beta_+} 
	 \right)\, ,
\end{equation}
where $ \a=\a_{t+1} \in \IZ_\gqz^d, 
 \b=\b_{t+1} \in \IZ_\gqz^t,
 \Alpha=\Alpha_{t+2} \in \IZ_\gqz^d, 
 (\Beta, \beta_+)=\Beta_{t+2} \in \IZ_\gqz^{t+1} $, and 
$ \coeffforoverweight_t, 
 \coeffforoverweightplus_{\Alpha, \Beta, \beta_+} \in \field. 
$
By convention, $ u_g = 0 $ and further we have that 
(using Notation \ref{Not:poly_ordering})
\begin{equation}
\label{weight}  
\left \{
\begin{array}{rrrcl}
	W( u_{t}^{n_{t+1}} )
	& = &
	W( \x^ {\a} \u_{< t}^{ \b } )
	& <_\indexorder &
	W( \x^{\Alpha} \, \u_{< t }^{ \Beta } \, u_t^{\beta_+} ) \,,
	\\[5pt]
	&&
	W( u_{t}^{n_{t+1}} ) 
	& <_\indexorder & 
	W ( u_{t+1} ).
\end{array}
\right.
\end{equation}
The first line follows by  \eqref{eq:quasi_hom_condi_star} and \eqref{eq:quasi_hom_condi_starplus}.
The second can be deduced from the fact
that $ v_{t + 1} $ is the only vertex of $ \nupoly{\weight_t}{f^{(t)}}{\x, u_0, \ldots, u_{t-1}}{u_t} $ (Construction \ref{The_Construction}), see also $ ( \bigstar) $.
 
For $ t \in \{ 0,\ldots, g-1 \} $, we set
$$
	\gamma_{t+1} := \weight ( u_{t} ) = v_{t+1}.
$$ 
By \eqref{eq:matrix_W_ast},
$ \weight $ is determined by the matrix $ (\, Id_d \,|\, \gamma_1 \,|\, \ldots \,|\, \gamma_{g} \,) $.
Hence \eqref{weight} can be rewritten as 
%
%
$$
\left \{
\begin{array}{rrrcl}
	n_{t+1} \gamma_{t+1}
	& = &
	\a + \sum\limits_{i = 1}^t \b_i \gamma_i
	& <_\indexorder &
	\Alpha + \sum\limits_{i=1}^{t} \Beta_i \gamma_i + \beta_+ \gamma_{t+1} 
	\,,
	\\[5pt]
	&&
	n_{t+1} \gamma_{t+1} 
	& <_\indexorder & 
	\gamma_{t+2} \,,
\end{array}
\right.
$$
%
where we write $ \b = ( \b_1, \ldots, \b_t ) $
and
$ \Beta = ( \Beta_1, \ldots, \Beta_t ) $.
Note that actually $ \b = \b_{t + 1} $ and $ ( \Beta, \beta_+ ) = \Beta_{t + 2} $ but in order to avoid too complicated expressions we use these references only when they are really needed.
Otherwise we suppress them.

Let $ \mfX_0 $ be the special fiber which is defined by the above equations where
we replace $ T $ by $ 0 $, i.e., with the notations of \eqref{OD}
\begin{equation}
\label{OD_T_zero}
	u_t^{ n_{t+1} } - \coeffforoverweight_t \, \x^ {\a} \u_{< t}^{ \b } = 0 
	\,\,\, , \,\, 
t \in \{ 0,\ldots, g-1 \} \,.
\end{equation} 
\noindent
Note that $ \mfX_0 $ is a toric variety.
Consider the point on $ \mfX_0 $ given by 
$$ 
	( x_1, \ldots, x_d, {u_0}, u_1, \ldots, u_{g-1}) = (1,\ldots,1,c_0,c_1,\ldots,c_{g-1});
$$
By using this in \eqref{OD_T_zero}, 
we can determine the entries $ c_0, \ldots, c_{g-1} $.
Namely,
\begin{equation}
\label{coeff_c_t}
\left\{
\begin{array}{rclll}
	c_0  & = & \eta_1 \cdot \coeffforf_0^{ \frac{ 1 }{ n_1 } } \,,& \mbox{where } \eta_1^{n_1} = 1 \\[3pt]
	c_t  & = & \eta_t \cdot \left( \coeffforf_t \cdot c_{<t}^{\b} \right)^{ \frac{ 1 }{ n_t } } \,,& \mbox{where } \eta_t^{n_t} = 1\,, & \mbox{for } t \geq 1,
\end{array}
\right.
\end{equation}
and we abbreviate again $ c_{<t}^\b = c_0^{ \b_{t+1,1} } \cdots c_{t-1}^{ \b_{t+1,t} } $, for $ \b = \b_{t+1}  \in \IZ_\gqz^t $.

For each root of unity $ \eta_t $ there are $ n_t $ choices and we get in total $ n_1 \cdot n_2 \cdots n_g = n $ choices for $ ( c_0, \ldots, c_{g-1} )$.
Later we will see that these $ n $ choices determine the $ n $ different roots of the original \qo hypersurface $ f $

If $ c_0 \cdots c_{g-1} \neq 0 $ then $(1,\ldots,1,c_0,c_1,\ldots,c_{g-1})$ belongs to the orbit of the torus of the toric variety, thus, it cannot belong to the singular locus
of $\mfX_0.$ 
Let $ S = (s_1,\ldots,s_d) $ be a set of $ d $ independent variables. 
A parametrization of $\mfX_0$ is then given by 
\begin{equation}
\label{PS}
\left \{
\begin{array}{ccll}
	x_i	& = & s_i^n 					\,, & i \in \{ 1,\ldots , d \}, \\[3pt]
	{u_0}	& = & c_0 \,S^{n\gamma_1} 		\,, & \\[3pt]
	u_t	& = & c_t \, S^{n\gamma_{t+1}}	\,, & t \in \{ 1,\ldots , g-1 \} .
 \end{array}
\right.
\end{equation}

Recall that, for $ i \in \{ 1, \ldots, d \} $, $ e_i $ denotes the $ i $-th unit vector in $ \IR^d $.

\begin{Lem}
\label{PG}
The germ $ \mfX_1 = V ( f ) $ has a parametrization of the form
\begin{equation}
\label{eq:para_T_one}
\left \{
\begin{array}{ccclll}
	x_i 	& = &	s_i^n & + & \sum\limits_{\M >_\indexorder n \cdot e_i}  X_{i,\M} \, S^\M 	 \,, & i \in \{ 1,\ldots , d \} ,\\[7pt]
	{u_0}	& = &	c_0 \, S^{ n \gamma_1} & + & \sum\limits_{\M >_\indexorder n \cdot \gamma_1} Z_{\M} \, S^{\M} 		\,, & \\[7pt]
	u_t	& = & 	c_t \, S^{n\gamma_{t+1}} & + & \sum\limits_{\M >_\indexorder n \cdot \gamma_{t+1}} U_{t,\M} \, S^{\M}	\,, & 	t \in \{ 1,\ldots, g-1 \} \,.
 \end{array}
\right.
\end{equation}
\end{Lem}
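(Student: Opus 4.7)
The plan is to apply a formal successive-approximation argument to the system $F_0=\cdots=F_{g-1}=0$ at $T=1$, viewed as equations in $u_0,\ldots,u_{g-1}$ after the substitution $x_i = s_i^n$. The leading solution will be the toric parametrization \eqref{PS}, and the higher-order corrections are produced by iteration controlled by the overweight structure \eqref{weight}. Substituting $x_i = s_i^n$ into $F_t|_{T=1}=0$ and using the convention $u_g=0$ yields the recursion
\[
u_{t+1} = u_t^{n_{t+1}} - \coeffforoverweight_t\, S^{n\a(t+1)}\, \u_{<t}^{\b(t+1)} + \sum_{\eqref{eq:quasi_hom_condi_starplus}} \coeffforoverweightplus_{\Alpha,\Beta,\beta_+}\, S^{n\Alpha}\, \u_{<t}^{\Beta}\, u_t^{\beta_+},\qquad 0 \leq t \leq g-1.
\]
I extend the weight $\weight$ to $\field[[S]]$ by $\weight(s_i):=e_i/n$, so that $\weight(S^\M) = \M/n$. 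Thanks to the choice \eqref{coeff_c_t} of the $c_t$ and to \eqref{eq:quasi_hom_condi_star}, the two lowest-weight monomials on the right-hand side cancel exactly at the toric leading order; the remaining terms all have weight strictly greater than $n_{t+1}\gamma_{t+1}$, consistent with $\gamma_{t+2} >_\indexorder n_{t+1}\gamma_{t+1}$ from \eqref{weight}.

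Starting from the initial ansatz $u_t^{(0)} := c_t S^{n\gamma_{t+1}}$, I define $u_t^{(k+1)}$ by substituting $u_s^{(k)}$ into the right-hand side of the recursion above. The analysis of weights just described shows that at each iteration the correction $u_t^{(k+1)} - u_t^{(k)}$ consists only of monomials of strictly larger weight than the lowest already-unresolved obstruction; hence the iteration is contracting in the $\weight$-adic topology on $\field[[S]]$. The invertibility that makes this contraction meaningful is that the Jacobian $(\partial F_t/\partial u_s)_{t,s}$ evaluated at the toric leading term is nonsingular, which holds because we chose the point $(1,\ldots,1,c_0,\ldots,c_{g-1})$ outside the singular locus of $\mfX_0$.

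The main obstacle is verifying convergence of the iteration, i.e.\ showing that the weights produced are cofinal in the semigroup $\IZ_{\geq 0}\langle e_1,\ldots,e_d,\gamma_1,\ldots,\gamma_g\rangle \subset \IQ^d_{\geq 0}$, so that the limit defines a genuine element of $\field[[S]]$. This reduces to the well-ordering of that semigroup under $\leq_\indexorder$, which follows from the positivity of the vertices $v(j)\in\IQ^d_{\geq 0}$ produced by Construction \ref{The_Construction} together with the strict inequalities encoded in \eqref{weight}. Once convergence is secured, the limiting $u_t$ have leading term $c_t S^{n\gamma_{t+1}}$ with all further monomials of weight strictly greater than $\gamma_{t+1}$, yielding the parametrization \eqref{eq:para_T_one} with $X_{i,\M} = 0$ on the $\x$-side.
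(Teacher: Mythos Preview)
Your overall idea is right and is in fact what the paper does: lift the toric parametrization \eqref{PS} to a solution of $F_0|_{T=1}=\cdots=F_{g-1}|_{T=1}=0$ using that the Jacobian $(\partial F_t/\partial u_s)_{t,s}$ is nonsingular at the chosen smooth point of $\mfX_0$. The paper carries this out by hand: after substituting the ansatz \eqref{eq:para_T_one} with undetermined coefficients into each $f_r$ and expanding $f_r=\sum_\N f_{r,\N}S^\N$, every $f_{r,\N}$ turns out to be linear in a ``new'' coefficient $U_{r,\M(\N)}$ with nonzero leading constant $n_{r+1}c_r^{\,n_{r+1}-1}$ (precisely a diagonal entry of that Jacobian), so one solves inductively.

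The gap is that the concrete iteration you write down does not implement this. Your recursion expresses $u_{t+1}$ in terms of $u_0,\ldots,u_t$; substituting $u_s^{(k)}$ into the right-hand side therefore produces $u_1^{(k+1)},\ldots,u_g^{(k+1)}$ but never updates $u_0$, and for $t=g-1$ it outputs a value of $u_g$ that nothing forces to vanish. With $u_0$ frozen at $c_0S^{n\gamma_1}$ the corrections $Z_\M$ demanded by \eqref{eq:para_T_one} never appear, and the constraint $u_g=0$ --- which is what couples the whole system back to $u_0$ --- is simply dropped. This forward substitution is a shooting map $(u_0,\ldots,u_{g-1})\mapsto(u_1,\ldots,u_g)$, not a contraction on the space of $(u_0,\ldots,u_{g-1})$. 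To repair it you must replace the forward substitution by a genuine Newton step $u^{(k+1)}=u^{(k)}-J^{-1}F(u^{(k)})$ using the invertible Jacobian you already invoke, or equivalently by the paper's coefficient-by-coefficient linear solve. Once corrected, your argument and the paper's coincide (your specialisation $X_{i,\M}=0$ is harmless; one parametrization suffices).

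A minor point: $\leq_\indexorder$ is not a well-ordering on $\IQ^d_{\geq 0}$, so your convergence clause is misstated. What is actually needed, and what holds, is that each correction consists of monomials of strictly larger weight, so that for every fixed exponent $\M$ the coefficient of $S^\M$ stabilises after finitely many steps.
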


\begin{proof}
We are searching for a solution of the equations \eqref{OD} (with $T=1)$ in the power series ring
$
	\field [[S]]=\field [[s_1,\ldots,s_d]]
$ 
which lifts the solution of $\mfX_0$ given in \eqref{PS}.
Let $ f_r $, $ r \in \{ 0,\ldots , g -1 \} $, be the series obtained from $ F_r $ in \eqref{OD} by replacing $ T $ by $ 1 $,
$$
	f_r  :=  u_{r + 1 }  - u_r^{ n_{r+1} } +	\coeffforoverweight_r \, \x^ {\a} \u_{< r}^{ \b } -
	\sum_{(\ref{eq:quasi_hom_condi_starplus}) } \,
	\coeffforoverweightplus_{\Alpha, \Beta,\beta_+} \, \x^{\Alpha} \, \u_{< r }^{ \Beta } \, u_r^{\beta_+}.
$$

Let $ \eta : K[[\x]][u_0, \ldots, u_{g-1}] \to K[[S]] $ be the morphism defined by \eqref{eq:para_T_one}.
Then
%
%
$$
 	\eta(f_r) = \sum_{\N} f_{r,\N} \, S^{\N},
$$
%
where $ f_{r,\N} $ ($ r \in \{ 0,\ldots , g-1 \}, \N \in \IZ_\gqz^d $) are polynomials in the variables 
$ X_{i, \M} $ ($ i \in \{ 1,\ldots, d \} $), $ Z_{\M} $ and $ U_{t, \M} $ ($ t \in \{ 0,\ldots , g-1 \} $) with $ \M \in \IZ_\gqz^d $.

The weight conditions and the fact that \eqref{PS} gives a parametrization of $ \mfX_0 $
imply
\begin{equation}
\label{eq:already_zero}
	f_{ r, \N } \equiv 0, \ \ \ 
	\mbox{ for } \N \not\geq_\indexorder n_{r+1} n \gamma_{r+1} \mbox{ or } \N = n_{r+1} n \gamma_{r+1}.
\end{equation}
(Recall Notation \ref{Not:poly_ordering} in order to keep in mind what $ \not\geq_\indexorder $ means).
To conclude, we need to determine  
$ X_{i, \M} $, $ Z_{\M} $ and $ U_{t, \M} $ ($ i \in \{ 1,\ldots, d \} $, $ t \in \{ 0,\ldots , g-1 \} $, $ M \in \IZ^d_\gqz $) in such a way that all $ f_{r,\N} $ are zero. 
Let us have a look at $ f_{0,\N } $ which is coming from 
	$$
	f_0 = u_{ 1 }  - {u_0^{ n_{1} }} + \coeffforoverweight_0 \, \x^ {\a} -
	\left ( 
	\sum \,
	\coeffforoverweightplus_{\Alpha,\beta_+} \, \x^{\Alpha} \, {u_0^{\beta_+}} 
	\right) ,
	$$
	where the sum ranges over those $ (\Alpha, \beta_+ ) $ such that 
	\begin{equation}
	\label{eq:hwt}
	\Alpha + \beta_+ \gamma_{1} >_\indexorder n_{1} \gamma_{1}  . 		
	\end{equation}
	Using \eqref{eq:para_T_one}, we obtain
	$$
	\begin{array}{rl}
	\eta(f_0) = &	
	\big( 
		c_1 \, S^{n\gamma_{2}} + \sum\limits_{\M >_\indexorder n \cdot \gamma_{2}} U_{1,\M} \, S^{\M}
	\big)  
	- 
	\big( 
		c_0 \, S^{ n \gamma_1} + \sum\limits_{\M >_\indexorder n \cdot \gamma_1} Z_{\M} \, S^{\M} 
	\big)^{ n_{1} } 
	+			
	\\[5pt]
	&	
	+ 
	\big( 
		\coeffforoverweight_0 S^{n \a} + \sum\limits_{\M >_\indexorder  n\a } w_{0,\M} \, S^\M 
	\big) -
	\\[5pt]
	&
	- 
	\sum \,
	\big(
		\coeffforoverweightplus_{\Alpha,\beta_+} \, 
		S^{n \Alpha} + 
		\sum\limits_{\M >_\indexorder  n\Alpha } w_{\Alpha, \beta_+,\M} \, S^\M 
	\big) 
	\, 
	\big(
		c_0 \, S^{ n \gamma_1} + \sum\limits_{\M >_\indexorder n \cdot \gamma_1} Z_{\M} \, S^{\M} 
	\big)^{\beta_+} 
	\\[8pt]
	= & 
	\sum\limits_{\N} f_{0,\N} \, S^{\N},
	\end{array}
	$$
	where $ w_{0,\M} := w_{0,\M}(X_{i,\ast}) $ 
	(resp.~$ w_{\Alpha, \beta_+,\M} := w_{\Alpha, \beta_+,\M}(X_{i,\ast}) $) 
	are terms only depending on $ \coeffforoverweight_0 $ 
	(resp.~ $ \mu_{\Alpha, \beta_+}$) 
	and $ X_{i,\M} $. 
	Recall that $ n\a = n_1 n\gamma_1 $ (see \eqref{weight}).

	By \eqref{eq:already_zero}, we only need to consider those $ \N \in \IZ^d_\gqz  $ for which we have that $ \N = n_{1} n \gamma_{1} + \PL >_\indexorder n_{1} n \gamma_{1} $.
	We set $ \M (\N) := n \gamma_{1} + \PL $ and claim
	\begin{equation}
	\label{eq:claim_f_0_N}
	f_{0,\N} = 
	\left\{
	\begin{array}{rl}
	- n_1 c_0^{ n_1 - 1 } Z_{\M (\N)} + h_{<\M (\N)}	\,,					
	&	\mbox{ if } \N \not\geq_\indexorder n \gamma_2 \,, \\[3pt]
	c_1 - n_1 c_0^{ n_1 - 1 } Z_{\M (\N)} + h_{<\M (\N)}	 \,, 
	&	\mbox{ if } \N = n \gamma_2 \,, \\[3pt]
	U_{1,\N} - n_1 c_0^{ n_1 - 1 } Z_{\M (\N)} + h_{<\M (\N)}	\,,	
	&	\mbox{ if } \N >_\indexorder n \gamma_2 \,,
	\end{array}
	\right.			
	\end{equation}
	where $ h_{<\M (\N)} $ is a polynomial only in variables $ Z_\M $, for which $ \M <_\indexorder \M (\N ) $, 
	and $ X_{i,\M'} $, for any $ \M' $.
	The only part that one might has to think about is the property on $ h_{<\M (\N)} $.
	This can be seen with the following arguments:
	\begin{itemize}
		\item	Consider the term $ \binom{n_1}{j} \, c_0^{n_1 - j } \, S^{ n \gamma_1 ( n_1 -j ) } \left( \sum\limits_{\M >_\indexorder n \gamma_1 } Z_\M S^\M \right)^j $, for some $ j \in \{ 2, \ldots n_1 \} $.
		Then for each term with non-zero coefficient the exponent of $ S $ is of the form
		$$
		n \gamma_1 ( n_1 -j ) + \M_1 + \ldots + \M_j
		= 
		n \gamma_1 n_1 + \PL_1 + \ldots + \PL_j
		\,
		, 
		$$
		where $ \M_k = n \gamma_1  + \PL_k $, $ \PL_k >_\indexorder  \0 $, for all $ k \in \{ 1, \ldots, j \} $.
		This exponent coincides with $ \N = n_{1} n \gamma_{1} + \PL $ if and only if 
		$$
		\PL = \PL_1 + \ldots + \PL_j.
		$$
		But since all $ \PL_k $ are non-zero this implies $ \PL  >_\indexorder \PL_k $, for all $ k $, which on the other hand means
		$$
		\M_k =  n \gamma_1  + \PL_k  <_\indexorder n \gamma_1  + \PL = \M (\N ),
		$$ 
		for all $ k \in \{ 1, \ldots, j \} $.
		\item	By using \eqref{eq:hwt} and the previous arguments, one can show that we also have $ \widetilde\M <_\indexorder \M (\N) $ for those $ Z_{\widetilde\M} $ appearing in $ f_{0,\N } $ and coming from $ \sum \,
		\coeffforoverweightplus_{\Alpha,\beta_+} \, \x^{\Alpha} \, {u_0^{\beta_+}} $.
	\end{itemize}

	Set $ C := (n_1 c_0^{ n_1 - 1 })^{-1} $.
	Since we want to achieve $ f_{0,\N} = 0 $ for all $ \N $, we obtain from \eqref{eq:claim_f_0_N} that
	$$
		Z_{\M (\N)} = 
		\left\{
		\begin{array}{ll}
		C \cdot h_{<\M (\N)}	\,,					
		&	\mbox{ if } \N \not\geq_\indexorder n \gamma_2 \,, \\[3pt]
		C \cdot(c_1 + h_{<\M (\N)} )	 \,, 
		&	\mbox{ if } \N = n \gamma_2 \,, \\[3pt]
		C \cdot ( U_{1,\N} + h_{<\M (\N)})	\,,	
		&	\mbox{ if } \N >_\indexorder n \gamma_2 \,,
		\end{array}
		\right.	
	$$
		Therefore we understand $ Z_{\M (\N)} $ quite well in the first two cases, whereas we have to determine $ U_{1, \N } $ for the last one.
		Before we come to this, let us mention that $ \N = n \gamma_2 $ is equivalent to 
		%
		$$
		\M (\N) = n \gamma_1 + ( n \gamma_2 - n_1 n \gamma_1 ) 
		= n ( \gamma_2 - n_1 \gamma_1 + \lambda_1 ) = n \lambda_2,
		$$
		%
		where we use $ \lambda_1 := \gamma_1 $ and $ \lambda_2 := \gamma_2 - n_1 \gamma_1 + \lambda_1 $.
		(Compare the last definitions with \eqref{eq:qo_def_lamda}).

	In general, for $ r \geq 1 $, 
	{by \eqref{eq:already_zero}, we only have to consider 
		$$ 
			\N_r = n_{r+1} n \gamma_{r+1} + \PL >_\indexorder n_{r+1} n \gamma_{r+1} .
		$$   
	Further, the monomial $ u_r^{n_{r+1}} $ in $ f_r $ provides in $ \eta (f_r) $ those variables $ U_{r,\M_r (\N_r)} $ that will play the analogous role as $ Z_{\M (\N)} $ in \eqref{eq:claim_f_0_N}.
	More precisely, $ \M_r (\N_r) = n\gamma_{r+1} + \PL $ 
	and by eliminating $ \PL $, we get
	 	$$
	 	\M_r (\N_r) = n \gamma_{r+1} + \N_r - n_{r+1} n \gamma_{r+1}.
	 	$$
	One can show with the same arguments as above that}, for $ r \geq 1 $,
	\begin{equation}
	\label{eq:f_r,N}
	f_{r,\N_r} = 
	\left\{
	\begin{array}{rl}
	- n_{r+1} c_r^{ n_{r+1} - 1 } U_{r, \M_r (\N_r)} + h_{<\M_r (\N_r)}	,					
	&	
	\mbox{ if } \N_r \not\geq_\indexorder n \gamma_{r+2} \\[3pt]
	c_{r+1} - n_{r+1} c_r^{ n_{r+1} - 1 } U_{r, \M_r (\N_r)} + h_{<\M_r (\N_r)}	 \,, 
	&	
	\mbox{ if } \N_r = n \gamma_{r+2}, 
	\\[3pt]
	U_{r+1,\N_r} - n_{r+1} c_r^{ n_{r+1} - 1 } U_{r, \M_r (\N_r)} + h_{<\M_r (\N_r)}	\,,	
	&	
	\mbox{ if } \N_r >_\indexorder n \gamma_{r+2},
	\end{array}
	\right.			
	\end{equation}
	where
	$ h_{<\M_r (\N_r)} $ is a polynomial in variables $ X_{i,\M' } $ and those $ Z_\M $ and  $ U_{i,\M} $ that are already known.
	As before, we obtain a formula for the coefficients $ U_{r,\M_r(\N_r)} $ of $ u_r $.

Hence we can find a solution 
for the equations $ f_{r,\N_r} = 0 $ by giving values inductively on $ r $ to $ U_{ r , \M_r ( \N_r )} $, $ r \in \{ 0, \ldots , g-1 \} $ {starting from $ r = g-1 $}, where $ u_0 $ is $ z $.
This determines all the required coefficients $ Z_{\M} $ for $ \M >_\indexorder n \cdot \gamma_1 $ and $ U_{t, \M} $ for $ \M >_\indexorder n \cdot \gamma_{t+1} $ and $ t \in \{0, \ldots, g-1\} $, whereas the coefficients $ X_{i,\M} $ are free for $ \M >_\indexorder n \cdot e_i $ and $ i \in \{ 1, \ldots, d \} $. 
In particular, we can choose $X_{i,\M} = 0 $ for all $ i $, $ \M $, i.e., $ x_i = s_i^n $.
\end{proof}

\begin{Rk}
	The linearity argument in the previous proof is in the same spirit as the proof of the fact that the space of wedges on a smooth variety $ X $ is a locally trivial bundle over $ X $, see \cite{Y}.
\end{Rk}	

In fact, we can say more about the structure of $ u_0 $ in \eqref{eq:para_T_one}.

\begin{Lem}
	\label{Lem:struc_u_0}
	We have that
		%
		$$
		{u_0}	= 	c_0 \, S^{ n \lambda_1} 
		+ 
		\sum(c_0)
		+
		\sum_{r=1}^{g-1}
		\left( \,
		d_r ( c_0, \ldots, c_r ) \, S^{ n \lambda_{r+1}}
		+ 
		\sum(c_0, \ldots, c_r)
		\, \right)
		,	
		$$
		%
		where 
		\begin{enumerate}
			\item	$ d_r ( c_0, \ldots, c_r )  = d_{r,1}(c_0, \ldots, c_{r-1} ) \cdot c_r + d_{r,0}(c_0, \ldots, c_{r-1} ) $, for certain functions $ d_{r,1}, d_{r,0} $ in $ ( c_0, \ldots, c_{r-1} ) $, and
			\medskip
			\item	$ \sum(c_0) := \sum Z_{\M}(c_0) \, S^{\M} $ (resp.~$ \sum(c_0, \ldots, c_r) := (\sum Z_{\M}(c_0, \ldots, c_r) \, S^{\M})$) is an abbreviation for the intermediate terms whose coefficients do only depend on $ c_0 $ (resp.~$ (c_0, \ldots, c_r) $).
			By this we mean those $ \M $ with $ \M >_\indexorder n \lambda_1 $, but $ \M \not\geq n \lambda_2 $ (resp.~with $ \M >_\indexorder n \lambda_{ r+ 1} $, but $ \M \not\geq n \lambda_{ r + 2} $). 
		\end{enumerate} 
\end{Lem}

\begin{proof}
	We have that 
	$$ 
		\N_r >_\indexorder n_{r+1} n \gamma_{r+1} 
		\ \ \mbox{ and } \ \ 
		\M_r (\N_r) = n \gamma_{r+1} + \N_r - n_{r+1} n \gamma_{r+1}.
	$$
	(Cf.~\eqref{eq:para_T_one}).
	If we set $ \N_{r-1} := \M_r (\N_r) $, then $ U_{r,\N_{r-1}} $ appears with a unit as coefficient in the formula for $ U_{r-1, \M_{r-1}(\N_{r-1})} $,
	{more precisely, considering $ 
		f_{r-1,\N_{r-1}} = 0 $, \eqref{eq:f_r,N} (for $ r-1 $ instead of $ r $) provides that $ U_{r-1, \M_{r-1} (\N_{r-1})} $ is equal to
		$$
		\left\{
		\begin{array}{rl}
		( n_{r} \,c_{r-1}^{ n_{r} - 1 })^{-1} \cdot h_{<\M_{r-1} (\N_{r-1})}	,					
		&	
		\mbox{ if } \N_{r-1} \not\geq_\indexorder n \gamma_{r+1} \\[3pt]
		(n_{r} c_{r-1}^{ n_{r} - 1 })^{-1} \cdot \big(c_{r} + h_{<\M_{r-1} (\N_{r-1})} \big) 	 \,, 
		&	
		\mbox{ if } \N_{r-1} = n \gamma_{r+1}, 
		\\[3pt]
		(n_{r} c_{r-1}^{ n_{r} - 1 })^{-1} \cdot \big( U_{r,\N_{r-1}}  + h_{<\M_{r-1} (\N_{r-1})} \big)	\,,	
		&	
		\mbox{ if } \N_{r-1} >_\indexorder n \gamma_{r+1},
		\end{array}
		\right.			
		$$}
	
	\noindent 
	{Moreover, we have} 
	$$ 
	\begin{array}{l}
		\M_{r-1}(\N_{r-1}) 
		= 
		n \gamma_r + \N_{r-1} - n_r n \gamma_r 
		= n \gamma_r + n \gamma_{r+1} + \N_r - n_{r+1} n \gamma_{r+1} - n_r n \gamma_r =
		\\[5pt]
		\textcolor{white}{\M(\N_{r-1})}= 
		\N_r + n \big[ \gamma_r(1 - n_r) + \gamma_{r+1} (1 - n_{r+1})\big] . 
	\end{array}
	$$
	By iterating, we see that 
	$$
		\M_0(\N_0) = \N_r + n \big[ \sum_{i=1}^{r+1} \gamma_i (1 - n_i) \big].
	$$
	A simple induction (using \eqref{eq:qo_def_lamda}) shows that $ \gamma_{{r+1}} + \big[ \sum\limits_{i=1}^{{r}} \gamma_i (1 - n_i) \big] = \lambda_{{r+1}} $, and hence, 
	$ \N_{{r-1}} = n \gamma_{{r+1}} $ is equivalent to
	%
	$$
	\M_0 (\N_0) =  n \lambda_{{r+1}}.
	$$

	If we translate this to the coefficients of $ u_0 $, we see that 
	$$ 
		Z_{\M_0(\N_0)} =  a_0 U_{r,N_{r-1}} + a_< ,
	$$ 
	for some constant $ a_0 \neq 0 $ and a function $ a_< $ in coefficients $ U_{i,\M } $ that are defined before.
	Combining this with the preceding consideration, we obtain the assertion.
\end{proof}

The following result is a crucial consequence of Lemma \ref{PG} since it is the analogon of the Abhyankar-Jung Theorem in our situation.

\begin{Thm}\label{AJ}
 Let $ f\in \field[[x]][z] $ be an irreducible Weierstrass polynomial of degree $ n $.
 Assume that the last entry of $ \theinvariant (f; \x; z ) $ is $ \GOOD $.
 Then the roots of $ f $ as polynomial in $ z $ are contained in 
 $ \field [[x_1^{\frac{1}{n}}, \ldots, x_d^{\frac{1}{n}}]] $.
\end{Thm}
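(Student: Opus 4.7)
The plan is to exploit the parametrization of $\mfX_1 = V(f)$ provided by Lemma \ref{PG} and invert its $\x$-part so as to express $u_0$ (and hence $z = u_0 - h_0(\x)$) as a fractional power series in the $x_i$. The key observation is that the proof of Lemma \ref{PG} exhibits the coefficients $X_{i,\M}$ ($\M >_\indexorder n\cdot e_i$) as \emph{free} parameters, while the $Z_\M$ and $U_{t,\M}$ are determined inductively by \eqref{smooth}. I would therefore specialize $X_{i,\M} := 0$ for all $i$ and $\M$, so that $x_i = s_i^n$ holds exactly for every $i \in \{1,\ldots,d\}$. After fixing a branch of the $n$-th root, this inverts to $s_i = x_i^{1/n} \in \field[[x_1^{1/n},\ldots,x_d^{1/n}]]$.

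Substituting $s_i = x_i^{1/n}$ into the second line of \eqref{eq:para_T_one} then yields
$$
u_0 \;=\; c_0 \prod_{i=1}^d \bigl(x_i^{1/n}\bigr)^{(n\gamma_1)_i} \;+\; \sum_{\M >_\indexorder n\gamma_1} Z_\M \, \prod_{i=1}^d \bigl(x_i^{1/n}\bigr)^{\M_i}.
$$
Every multi-index $\M$ occurring belongs to $\IZ^d_\gqz$, and the exponents $(n\gamma_1)_i$ are non-negative integers because $n\gamma_1 = (n/n_1)\a(1) \in \IZ^d_\gqz$ (since $n_1 \mid n$ by the construction); hence each summand is a monomial in $x_1^{1/n},\ldots,x_d^{1/n}$ with non-negative integer exponents, and the resulting $u_0$ lies in $\field[[x_1^{1/n},\ldots,x_d^{1/n}]]$. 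By the way $\mfX_1$ was built from \eqref{OD} at $T=1$, this $u_0$ solves $f=0$, and then $z = u_0 - h_0(\x)$ is a root of $f$ that also lies in the same fractional power series ring. To recover all $n$ roots, I would let the roots of unity $(\eta_1,\ldots,\eta_g)$ in \eqref{coeff_c_t} vary: the $n_1 n_2 \cdots n_g = n$ possible tuples yield $n$ distinct leading data $(c_0,\ldots,c_{g-1})$ and hence $n$ distinct values of $u_0$, which must therefore exhaust the $n$ roots of the degree $n$ polynomial $f$.

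The technical point that will need the most care is verifying that the specialization $X_{i,\M}=0$ is compatible with the inductive solution for the coefficients $Z_\M$ and $U_{t,\M}$. This reduces to the observation that, in the linear equation \eqref{smooth} appearing in the proof of Lemma \ref{PG}, the leading coefficient $C$ is nonzero and the remainder $h$ involves only coefficients of strictly smaller weight; the induction on $\N$ (carried out in the order dictated by the product order) then determines every $U_{r,\M(\N)}$ (and analogously every $Z_\M$) uniquely once the free parameters have been fixed, regardless of the choice $X_{i,\M}=0$. With this in hand the argument goes through, and the $n$ fractional power series so constructed are precisely the roots of $f$.
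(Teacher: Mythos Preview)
Your argument is correct and shares the overall strategy with the paper (use the parametrization of Lemma~\ref{PG} to reach $x_i = s_i^n$, then substitute $s_i = x_i^{1/n}$ into the expansion of $u_0$), but it deviates in two places. First, to arrange $x_i = s_i^n$ the paper writes $x_i = s_i^n \epsilon_i$ for a unit $\epsilon_i \in \field[[S]]$ and then changes the parameters $s_i$ by absorbing an $n$-th root of $\epsilon_i$ (using $\mathrm{char}\,\field = 0$); you instead specialize the free coefficients $X_{i,\M}$ to zero, which is more direct and exploits the freedom noted at the end of the proof of Lemma~\ref{PG}. Second, to pass from one root to all roots, the paper observes that the extension of fraction fields of $\field[[\x]]$ in $\field[[x_1^{1/n},\ldots,x_d^{1/n}]]$ is Galois, so once one root lies in the latter ring they all do. Your alternative is to let the roots of unity $(\eta_1,\ldots,\eta_g)$ vary and produce all $n$ roots explicitly; this is more constructive, but the assertion that distinct tuples $(c_0,\ldots,c_{g-1})$ yield distinct values of $u_0$ is not quite immediate from what you wrote. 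It does follow from the finer structure of the recursion---the coefficient of $S^{n\lambda_{r+1}}$ in $u_0$ depends linearly on $c_r$ (this is worked out in Observation~\ref{Obs:Computing_roots})---or one can sidestep the issue entirely with the Galois argument, which is what the paper does.
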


\begin{proof}
	Note that we can write $ x_i = s_i^n \cdot \epsilon_i $ in the parametrization found in Lemma \ref{PG}, where $ \epsilon_i $ is a unit in $ \field [[S]] $, for all $ i \in \{ 1,\ldots , d \} $.
	Since we are in characteristic zero, we can extract a $n-$th root of $ \epsilon_i $ and after a change of
variables we can write $ x_i = s_i^n $, $ i \in \{ 1,\ldots,d \}$.
	Hence $ s_i = x_i^{\frac{1}{n}} $, $ i \in \{ 1,\ldots,d \} $.
	After replacing all $ s_i $ by $ x_i^{\frac{1}{n}} $ in the expansion of {$ z = u_0(S) + h_0(\x(S)) $} in Lemma \ref{PG}, we obtain that $ f $ has a root in  $ \field [[x_1^{\frac{1}{n}},\ldots,x_d^{\frac{1}{n}}]] $.
	But the extension of the field of fractions of $ \field[[x]] $ to the field of fractions of $ \field [[x_1^{\frac{1}{n}},\ldots,x_d^{\frac{1}{n}}]] $ is Galois and hence all the other roots are also contained in $ \field [[x_1^{\frac{1}{n}},\ldots,x_d^{\frac{1}{n}}]] $.
\end{proof}

\smallskip 

Now, we can come to the

\begin{proof}[Proof of Proposition \ref{Prop:Remain_Main}]
	We want to prove that $ f \in \field [[ \x ]][z] $ is quasi-ordinary.
	By the assumption that our invariant ends with $ \GOOD $ we obtained an overweight deformation from which we constructed the roots in the previous parts.
	It remains to show that the difference of the different roots are monomials times a unit. 
	
	By \eqref{coeff_c_t}, a root $ \zeta $ is determined by the choice of certain roots of unity $ ( \eta_1, \ldots, \eta_g ) $ which determine the values $ ( c_0, \ldots, c_{ g-1 } ) $.
	Let $ \zeta_1, \zeta_2 $ be two roots of $ f $ and let $ ( \eta_1^{(1)}, \ldots, \eta_g^{(1)} ) $ resp.~$ ( \eta_1^{(2)}, \ldots, \eta_g^{(2)} ) $ be the corresponding roots of unity.
	If $ \eta_1^{(1)} \neq \eta_1^{(2)} $ then it follows from \eqref{coeff_c_t} and 
	Lemma \ref{Lem:struc_u_0} that
	$$
		\zeta_1 - \zeta_2 = x^{\lambda_1} \cdot \epsilon_1,
	$$
	for a unit $ \epsilon_1 $.
	Note that $ \epsilon_1 $ corresponds to a unit in $ \field [[ S]] $.
	(Recall that we have to replace $ s_i $ by $ x_i^{\frac{1}{n}} $ for all $ i $).
	
	Therefore suppose $ \eta_j^{(1)} = \eta_j^{(2)} $, for all $ j \in \{ 1, \ldots, r - 1 \} $, and $ \eta_r^{(1)} \neq \eta_r^{(2)} $, for some $ r \geq 1 $.
	By 
	Lemma \ref{Lem:struc_u_0}, all the terms
	before $ S^{n \lambda_r } $ do only depend on $ c_0, \ldots, c_{r-1} $ (which coincide for both roots by the assumption on the roots of unity and \eqref{coeff_c_t}).
	Hence, these must vanish in $ \zeta_1 - \zeta_2 $.
	Moreover, since $ d_r ( c_0, \ldots, c_r ) $ is linear in $ c_r $ we get
	$$
		\zeta_1 - \zeta_2 = x^{\lambda_r} \cdot \epsilon_r,
	$$
	for some unit $ \epsilon_r $ as before.
	This 
	implies the assertion 
	of the proposition. 
\end{proof}

\begin{Rk}
	As the referee suggested, one can find another proof of Proposition \ref{Prop:Remain_Main} using section 5.4 \cite{GP1}.
	Theorem 3 and Remark 41, loc.~cit., imply that under the hypothesis of the proposition, there are local ring extensions
	$$
		K[[\x]] \subset K[[\x]][u_0]/ \langle f \rangle \subset K[[\x^{L_\gqz}]],
	$$
	where $ L = \IZ^d + \IZ v_1 + \ldots + \IZ v_g $.
	Since the ring extension $ K[[\x]] \subset K[[\x^{L_\gqz}]] $ is defined by a lattice extension, it is unramified over the torus.
	This implies that $ K[[\x]] \subset  K[[\x]][u_0]/ \langle f \rangle $ is also unramified over the torus, thus $ f $ is quasi-ordinary.
\end{Rk}

The proof that we provided
 reveals that the invariant $ \theinvariant(f;\x;z) $ does not only tell us if a given $ f $ is quasi-ordinary, but provides even more information in the \qo case.
More precisely,

\begin{Prop}
	\label{Prop:gen_semi_gp}
Let $ f $ be \qo and irreducible and
$$ 
		\theinvariant(f;\x;z) = ( \, v_1; \, v_2; \, \ldots; \, v_g; \, \GOOD \,) .
$$	
The entries $ ( v_{1}, \ldots, v_{g} ) =(\gamma_1,\ldots,\gamma_g) $ yield a system of generators of the semi-group associated to the \qo singularity.
\end{Prop}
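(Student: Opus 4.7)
The plan is to reduce the claim to a combination of Proposition \ref{sdir} and the classical description of the quasi-ordinary semigroup due to Gonz\'alez-P\'erez \cite{GP2} (see also \cite{KM}, \cite{GP1}).

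First, I would invoke Proposition \ref{sdir} together with its proof. When $f$ is irreducible and quasi-ordinary, that proof shows that the intermediate elements $u_0, \ldots, u_g$ produced by Construction \ref{The_Construction} coincide with the approximate roots $q_0, \ldots, q_g$ of $f$, and that the unique vertex appearing at the $(i{+}1)$-st cycle is exactly the exponent $\gamma_{i+1}$ attached to $f$ by the recurrence \eqref{eq:qo_def_lamda}. This gives the identification
$$
    (v(1), \ldots, v(g)) = (\gamma_1, \ldots, \gamma_g)
$$
from which only the semigroup-theoretic content of the statement remains.

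Second, I would invoke the classical characterization of the semigroup $\Gamma$ of an irreducible quasi-ordinary hypersurface: by \cite{GP2} (compare also \cite{KM}, \cite{GP1}), $\Gamma \subset \IQ^d_\gqz$ is minimally generated by $e_1, \ldots, e_d$ together with the $\gamma_1, \ldots, \gamma_g$ associated to the characteristic exponents $\lambda_1, \ldots, \lambda_g$ via \eqref{eq:qo_def_lamda}. Minimality of this system is equivalent to the two conditions $n_i \gamma_i <_{\indexorder} \gamma_{i+1}$ and $\gamma_{i+1} \notin M_i = \IZ^d + \IZ\gamma_1 + \cdots + \IZ\gamma_i$ --- which are precisely the requirements $(\star 1)$ and $(\star 2)$ of Notation \ref{Not:star} transported through the identification $v(i) = \gamma_i$. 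Condition $(\star 0)$, in turn, matches the fact that at each step the initial form factors as a power of a single binomial, which is exactly what singles out the minimal generator $\gamma_{i+1}$ (rather than a redundant one belonging to the semigroup generated by the previous ones).

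Combining these two ingredients proves the proposition. I do not expect a substantial obstacle: the technical core is Proposition \ref{sdir} (already proved) plus a reference to the classical semigroup description. The only point requiring care is the dictionary between the polyhedral data $(v(1), \ldots, v(g))$ extracted by our algorithm and the $(\gamma_1, \ldots, \gamma_g)$ attached to $f$ via Lipman's characteristic exponents; but the conditions in Notation \ref{Not:star} were designed so that this dictionary is tautological, which is why the minimality of the generating system falls out of the construction itself.
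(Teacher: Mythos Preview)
Your proposal is correct and follows essentially the same approach as the paper: the paper's proof is the single sentence ``This is a direct application of the proof of Proposition \ref{sdir},'' and you have spelled out precisely that application --- identify $(v(1),\ldots,v(g))=(\gamma_1,\ldots,\gamma_g)$ via the proof of Proposition \ref{sdir}, then cite the classical semigroup description \cite{GP2}, \cite{KM}, \cite{GP1}. Your additional remarks on how the conditions $(\star 0)$--$(\star 2)$ mirror the minimality properties are not needed for the argument (the proof of Proposition \ref{sdir} already delivers the $\gamma_i$ directly, and their minimality as semigroup generators is part of the cited classical result), but they are a reasonable gloss and do no harm.
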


\begin{proof}
	This is a direct application of the proof of Proposition  \ref{sdir}.
\end{proof}

Note that the entries are not necessarily the minimal set of generators of the semigroup,
e.g., if the semigroup associated to the quasi-ordinary projection is not normalized, see \cite{GP2}.
In \cite{LipmanThesis}, Lipman shows that any quasi-ordinary hypersurface can be parametrized by a normalized branch, see also the appendix of \cite{G}.
In connection to this see also the recent paper by Garc\'ia Barroso, Gonz\'alez P\'erez, and Popescu-Pampu \cite{GBGPPP}.

\medskip

Besides the order of a hypersurface at a point another measure for the complexity of a singularity is the so called log canonical threshold.
In \cite{Pedro_et_alt_lct}, Theorem 3.1, it was proved that the log canonical threshold of a \qo singularity can be computed from the generators of the corresponding semi-group.
Therefore we obtain:

\begin{Rk}
	If $ f $ is an irreducible \qo hypersurface singularity and $ ( \x, z ) $ such that the last entry of $ \theinvariant( f; \x ; z) $ is $ \GOOD $,
	then one can compute from $ \theinvariant(f;\x;z) $ the log canonical threshold of $ f $.
\end{Rk}


\smallskip 
	
	Finally, let us remark that our original approach to the characterization was via \cite{ACLM_Newton} (see also \cite{GV}).
	There, \qo singularities are characterized using the Newton process and Newton trees. 
	One can show that there is a one-to-one correspondence between the Newton process and our construction which may yield another proof of our characterization.
	It is worth noticing that the two characterizations are different in nature. This difference may be compared to the difference between resolution of singularities of quasi-ordinary singularities by a sequence of toric maps corresponding to the Puiseux pairs, and their resolution by one toric map, see \cite{GP1}.

%
%
%
%
%
%
%
%
%
%
%
%
%
%
%
%
%

\label{Literature}

\newcounter{zaehler}
\setcounter{zaehler}{1}
\newcommand{\art}{\thezaehler}
 

\end{document}